\documentclass[11pt]{article}       
\usepackage{geometry}               
\usepackage{graphicx}
\usepackage{geometry}
\usepackage{caption}
\usepackage{subcaption}
\usepackage{mathrsfs}
\usepackage{comment}
\usepackage{amsmath,esint} 
\usepackage{amssymb}  
\usepackage{amsthm}  
\usepackage{bm}
\usepackage{color}
\usepackage{array}
\usepackage{cite}
\usepackage{hyperref}
\usepackage{comment}
\usepackage{enumitem}

\numberwithin{equation}{section}
\newtheorem{theorem}{Theorem}[section]
\newtheorem{proposition}[theorem]{Proposition}
\newtheorem{defn}[theorem]{Definition}

\newtheorem{lemma}[theorem]{Lemma}
\newtheorem{remark}[theorem]{Remark}

\newtheorem*{theorem*}{Theorem}
\newtheorem*{proposition*}{Proposition}

\newcommand{\R}{\mathbb{R}}

\newcommand{\pOmega}[0]{\partial\Omega}
\newcommand{\sgrad}[0]{\nabla_{\pOmega}}
\newcommand{\closure}[1]{\overline{#1}}

\newcommand{\relaxlimitSup}[0]{{\limsup_{h\to 0}}^*}
\newcommand{\relaxlimitInf}[0]{{\liminf_{h\to 0}}_*}
\newcommand{\dL}[0]{\,\mathrm{d}\mathcal L^d}
\newcommand{\dH}[0]{\,\mathrm{d}\mathcal H^{d-1}}

\DeclareMathOperator*{\argmin}{arg\,min}

\def\le{\leqslant}
\def\leq{\leqslant}
\def\ge{\geqslant}
\def\geq{\geqslant}

\title{Convergence of a minimizing movement scheme for contact-angle mean curvature flow in a smooth bounded domain}

\newcommand{\footremember}[2]{%
    \footnote{#2}
    \newcounter{#1}
    \setcounter{#1}{\value{footnote}}%
}

\author{
  Tokuhiro Eto \footremember{trailer}{Universit\'{e} Claude Bernard Lyon 1, CNRS, Centrale Lyon, INSA Lyon, Universit\'{e} Jean Monnet, ICJ UMR5208, 69622 Villeurbanne, France (eto@math.univ-lyon1.fr),},
  Jiwoong Jang \footremember{alley}{Department of Mathematics, University of Maryland-College Park (jjang124@umd.edu).
  }
}

\date{}
\providecommand{\keywords}[1]{\textbf{Key words.} #1}
\providecommand{\MSC}[1]{\textbf{MSC codes.} #1}

\begin{document}
\maketitle


\pagestyle{myheadings}
\thispagestyle{plain}

\begin{abstract}
This paper studies a Chambolle-type minimizing movement scheme for mean curvature flow with prescribed contact angle in a smooth bounded domain. The scheme is based on the capillary functional and the geodesic signed distance relative to the container, and yields a time-discrete level-set approximation. The main result asserts that, for every Lipschitz-continuous boundary function prescribing a strictly nondegenerate contact angle, the approximate solutions converge locally uniformly to the unique viscosity solution of the corresponding level-set mean curvature equation with oblique derivative boundary condition. This improves a previous convergence theorem, where the container was assumed to be convex and a curvature-type condition relating the tangential derivative of the prescribed contact-angle function to the principal curvatures of the container boundary was imposed. The main new ingredient is a uniform Lipschitz estimate for the solutions of the variational problems defining the scheme. This estimate is derived by applying a Bernstein-type argument to a suitable gradient function, with additive and multiplicative correctors reflecting the prescribed contact angle, which rules out boundary maxima without relying on the previous curvature-type condition.
\end{abstract}








\keywords{Mean curvature flow, Contact angle condition, Viscosity solutions, Minimizing movement schemes}



\vspace{0.2cm}

\MSC{35D40, 53E10, 35K93}

\section{Introduction}\label{sec:introduction}

\subsection{Brief background}

In this paper, we consider the mean curvature flow in a smooth bounded domain $\Omega\subset\R^d\,(d\ge 2)$ with general contact angle condition. Precisely, we aim to find an evolving interface $\{\Gamma(t)\}_{t\ge 0}$ satisfying the following system of equations:

\begin{equation}
\label{eq:target_equation}
    \begin{cases}
    V = - \operatorname{div}_{\Gamma}\nu_{\Gamma(t)},&\qquad \mbox{on}\quad \Gamma(t),\quad t > 0,\\
    \nu_\Omega\cdot\nu_{\Gamma(t)} = -\beta & \qquad\mbox{on}\quad \partial\Gamma(t)\cap \partial\Omega,\quad t > 0,\\
    \Gamma(0) = \Gamma_0,
    \end{cases}
\end{equation}
where $\nu_{\Gamma(t)}$ denotes the normal vector field on $\Gamma(t)$, and $V$ is the normal velocity of $\Gamma(t)$ in the direction of $\nu_{\Gamma(t)}$; the operator $\operatorname{div}_\Gamma$ denotes the surface divergence. The first equation in \eqref{eq:target_equation} means that $\Gamma(t)$ should move with the speed being equal to its mean curvature, whereas the second equation stresses that the boundary $\partial\Gamma(t)$ contacts the boundary $\partial\Omega$ of the container $\Omega$ with the angle being equal to $\arccos{(-\beta)}$. Here, $\nu_\Omega$ denotes the outward unit normal vector field on $\partial\Omega$, and the function $\beta:\partial\Omega\to(-1,1)$ is related to the prescribed contact angle function $\theta:\partial\Omega\to(0,\pi)$ by $-\beta=\cos\theta$. Here, we exclude the case where $\Gamma(t)$ is tangential to the boundary of the container. Finally, to close the system \eqref{eq:target_equation}, the third equation is imposed as the initial condition.

The first author and Giga \cite{EG24} proposed a minimizing movement scheme (MMS) to approximate the solution to \eqref{eq:target_equation}. We briefly review their idea here. To define a time-discrete solution to \eqref{eq:target_equation}, the following iteration was implemented. Given an initial level-set function $u_0$:

\begin{equation}
\label{eq:mms}
u_{n+1} := \argmin_{u\in L^2(\Omega)\cap BV(\Omega)} J_\beta(u)
\end{equation}
with
\begin{align*}
J_\beta(u) &:= C_\beta(u) + \frac{1}{2h}\int_\Omega(u - d_{\Omega,E_n})^2\,\mathrm{d}\mathcal{L}^d,\\
C_\beta(u) &:= \int_\Omega |\nabla u| + \int_{\partial\Omega}\beta\gamma u\,\mathrm{d}\mathcal{H}^{d-1},
\end{align*}
where $\mathcal{L}^d$ and $\mathcal{H}^{d-1}$ respectively denote the $d$-dimensional Lebesgue measure and $(d-1)$-dimensional Hausdorff measure; $\gamma:BV(\Omega)\to L^1(\pOmega)$ is the trace operator; $E_n := \{u_n \le 0\}$, and $d_{\Omega,E_n}$ denotes the geodesic signed distance function to $E_n$ in $\Omega$ whose sign is negative in the interior of $E_n$;
$C_\beta(u)$ is called the \textit{capillary functional}. The target energy in \eqref{eq:mms} was originally motivated by Chambolle's scheme \cite{Chambolle04} to approximate the mean curvature flow without contact boundary. In contrast to his scheme, the target energy of the MMS is replaced with the capillary functional $C_\beta(u)$ to cope with contact boundary. Moreover, it should be also noted that the data function is replaced with the geodesic signed distance function. In the case when $\Omega$ is not convex, this replacement of the distance function guarantees the monotonicity of the discrete scheme, that is, we have $d_{\Omega,E} \le d_{\Omega,F}$ whenever $E \supset F$ (see \cite[Lemma 3.5]{EG25}). We remark that these discrete schemes are the level-set analogue of Almgren--Taylor--Wang-type variational scheme to construct a flat flow \cite{Almgren93}.

Letting $\mathcal{P}(\closure{\Omega})$ denote the family of all subsets of $\closure{\Omega}$, for a capillary coefficient $\alpha:\partial\Omega\to(-1,1)$ we define a set operator $T_{h,\alpha}:\mathcal{P}(\closure{\Omega})\to \mathcal{P}(\closure{\Omega})$ by replacing $\beta$ with $\alpha$ in \eqref{eq:mms}. Namely, we set

\begin{equation}
    \label{eq:def-Th}
T_{h,\alpha}(E) := \{x\in\overline{\Omega}\mid u(x)\le 0\},
\end{equation}
where $u$ is a minimizer of \eqref{eq:mms} with $\alpha$ in place of $\beta$ and with $E\subset\closure{\Omega}$ in place of $E_n$. In fact, this set operator $T_{h,\alpha}$ is well defined since the minimizer is unique by strict convexity of the target energy and the lower semi-continuity of the capillary functional $C_\alpha(u)$ (see Modica \cite{Modica87}).
For the proof of the lower semi-continuity of $C_\alpha(u)$ which works for a uniformly $C^2$ bounded domain $\Omega$, we refer the reader to \cite[Proposition 4]{EG24}.

The geometric evolution equation \eqref{eq:target_equation} can be represented as the following Neumann boundary value problem in the level-set formulation:
\begin{align}\label{eq:level-set}
\begin{cases}
    u_t = |\nabla u| \operatorname{div} \nabla \phi(\nabla u) & \text{in} \quad \Omega \times (0,T), \\
    \nabla u \cdot \nu_{\Omega}  + \beta |\nabla u| = 0 & \text{on} \quad \partial\Omega \times (0,T), \\
    u(\cdot,0) = u_0 & \text{in} \quad \overline{\Omega},
\end{cases}
\end{align}
where $\phi(p) := |p|$ for $p\in\R^d$, and $u_0$ is supposed to be a level-set function whose zero-level set corresponds to the interface $\Gamma(t)$. We note that the first equation of \eqref{eq:level-set} is singular if $\nabla u = 0$, thus we will adopt the notion of viscosity solutions which will be defined later. The interpretation from the geometric form \eqref{eq:target_equation} to \eqref{eq:level-set} is clear on noting that if $u(\cdot, t)$ is a level-set function of $\Gamma(t)$, then $u_t(\cdot,t)/|\nabla u(\cdot,t)|$ and $\nabla u(\cdot,t) / |\nabla u(\cdot,t)|$ correspond to the normal velocity and the unit normal vector field $\nu_{\Gamma(t)}$ of $\Gamma(t)$, respectively.

We mention that the solution $w$ to the variational problem \eqref{eq:mms} solves the discrete variant of the level-set equation \eqref{eq:level-set}: The Euler--Lagrange equation of the variational problem \eqref{eq:mms} can be written as 
\begin{equation}
    \label{eq:intro-discrete-PDE}
    \begin{cases}
        w - h\operatorname{div}\nabla\phi(\nabla w) = d_{\Omega,E} & \qquad\mbox{in}\quad\Omega,\\ 
        \nabla w\cdot\nu_\Omega + \beta|\nabla w| = 0 & \qquad\mbox{on}\quad\pOmega.
    \end{cases}
\end{equation}


\subsection{The previous result on a convex domain} In \cite{EG25}, a discrete-in-time approximate solution to \eqref{eq:level-set} is constructed by using the set operator $T_h:=T_{h,-\beta}$. Precisely, given a function $u\in C(\closure{\Omega})$, the next-step function $S_hu$ of $u$ was defined by

\begin{equation}
\label{eq:def-Sh}
    S_h u(x) := \sup\left\{\lambda\in\R \mid x\in T_h(\{u \ge \lambda\})\right\}\qquad\mbox{for}\quad x\in\overline{\Omega}.
\end{equation}

The minus sign in $T_h=T_{h,-\beta}$ is forced by the superlevel-set convention in \eqref{eq:def-Sh}. Indeed, if $\alpha$ is the capillary coefficient and $E_\lambda=\{u\geq\lambda\}$ is a regular superlevel set, then
\[
    \nu_{E_\lambda}=-\frac{\nabla u}{|\nabla u|},
    \qquad
    \nu_{E_\lambda}\cdot\nu_\Omega+\alpha=0,
\]
and hence $\nabla u\cdot\nu_\Omega-\alpha|\nabla u|=0$. Choosing $\alpha=-\beta$ therefore gives exactly the boundary condition in \eqref{eq:level-set}.

The definition of $S_h$ in \eqref{eq:def-Sh} is given in \cite{EG25} (see also \cite{EGI12-1,EGI12-2} for no contact boundary case). The iteration of $S_h$ leads to the construction of approximate solution (discrete in time) to the level-set equation \eqref{eq:level-set}:

\begin{equation}
    \label{eq:approx-sol}
    u^h(t, x) := S_h^{\lfloor \frac{t}{h} \rfloor} u(x),
\end{equation}
where for $r\in\R$, $\lfloor r\rfloor$ denotes the largest integer which does not exceed $r$.
The main result of \cite{EG25} shows:

\begin{proposition}
    \label{prop:previous}
    Assume that $\Omega$ is a bounded convex domain in $\R^d$ with smooth boundary.
    Assume that $\beta\in C^{2,\alpha}(\pOmega)$ for some $\alpha\in(0,1)$, $\|\beta\|_{C^0(\pOmega)} < 1$, and \begin{align}\label{assumption:convexity}
    |\nabla_{\partial\Omega}\beta|\leq \kappa(x)\qquad\text{for all }x\in\partial\Omega,
    \end{align}
    where $\kappa(x)$ denotes the minimum positive principal curvature of $\pOmega$ at $x$.
    Then, $u^h$ uniformly converges to the unique viscosity solution to \eqref{eq:level-set} as $h\to 0$.
\end{proposition}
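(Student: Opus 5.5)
The plan is the Barles--Souganidis framework: monotonicity, stability and consistency of $S_h$, together with a strong comparison principle for \eqref{eq:level-set}. These give convergence of $u^h$ to the unique viscosity solution via half-relaxed limits. First I check that $S_h$ is monotone and commutes with constants. Monotonicity comes from the order-preserving property of $T_h$. If $E\supset F$, then $d_{\Omega,E}\le d_{\Omega,F}$ by \cite[Lemma 3.5]{EG25}. A comparison principle for the minimizers of the strictly convex functional $J_{-\beta}$ then gives $w_E\le w_F$, and hence $T_h(E)\supset T_h(F)$. The identity $S_h(u+c)=S_hu+c$ is immediate from \eqref{eq:def-Sh}.

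Next I need uniform bounds so that the relaxed limits $\relaxlimitSup u^h$ and $\relaxlimitInf u^h$ are finite and attain the initial datum. Boundedness follows from monotonicity by comparing with constants. Continuity at $t=0$ follows by comparing with explicit barriers. I would build these from the evolution of small geodesic balls (or half-balls at the boundary) under $T_h$, and show their radii shrink at a rate $O(h/r)$ per step. This is where a Lipschitz or regularity estimate on the minimizer $w$ of \eqref{eq:mms} is needed.

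The core step is consistency. Take a smooth test function $\varphi$ with $\nabla\varphi(x_0)\neq0$. Locally, $S_h\varphi(x)-\varphi(x)$ should be approximately $h|\nabla\varphi|\operatorname{div}(\nabla\varphi/|\nabla\varphi|)$. At boundary points the inequality should be relaxed by the oblique condition $\nabla\varphi\cdot\nu_\Omega+\beta|\nabla\varphi|$, in the usual min/max viscosity sense. For interior points I would follow \cite{Chambolle04}: from the Euler--Lagrange equation \eqref{eq:intro-discrete-PDE}, use the geodesic distance of a smooth level set to build strict sub- and supersolutions of the discrete equation. The nontrivial contribution near the boundary comes from level sets meeting $\pOmega$. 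There I would modify $\varphi$ by a corrector such as $\varphi\pm \varepsilon\, d_{\pOmega}$, so that the boundary inequality becomes strict, and then compare with $w$.

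The main obstacle is that this comparison needs uniform $C^{0,1}$ (or better) control of $w$ up to $\pOmega$, independent of $h$. In \cite{EG25} this was exactly where convexity of $\Omega$ and condition \eqref{assumption:convexity} enter. Under the hypotheses of Proposition~\ref{prop:previous} both are available. My approach would be a Bernstein argument on the regularized problem (replace $\phi$ by $\sqrt{\varepsilon^2+|p|^2}$), applied to $|\nabla w|^2$. At an interior maximum, the equation excludes a large gradient. At a boundary maximum, I differentiate the Neumann condition tangentially. The sign of the resulting term is controlled by the second fundamental form of $\pOmega$ minus $|\sgrad\beta|$, and \eqref{assumption:convexity} makes it favorable. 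Passing $\varepsilon\to0$ yields the estimate.

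Finally, I invoke the comparison principle for the oblique problem \eqref{eq:level-set} with $|\beta|<1$ and smooth $\pOmega$. This is standard, in the Barles / Ishii--Sato type. It gives $\relaxlimitSup u^h\le\relaxlimitInf u^h$, which yields local uniform convergence to the unique viscosity solution. On the compact domain $\closure{\Omega}$, this is uniform convergence.
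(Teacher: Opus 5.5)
\textbf{Gap in the boundary step of the Bernstein argument.} Your overall architecture matches the one the paper describes for \cite{EG25}: Barles--Souganidis, with the whole difficulty pushed into a Lipschitz bound for the minimizer obtained by a Bernstein argument. The gap is in the boundary part of that Bernstein argument.

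If you work with $\phi=\sqrt{\varepsilon^2+|\nabla w|^2}$ (equivalently $|\nabla w|^2$), tangential differentiation of the boundary condition does not control the boundary derivative when $\beta\not\equiv 0$. With $\nu_\Omega(x_0)=-e_d$ the condition reads $w_d=\beta\phi$, so $\phi_d=\phi^{-1}\sum_{j<d}w_jw_{jd}+\beta w_{dd}$. Tangential differentiation determines $w_{jd}$, and this is where the second fundamental form and $\nabla_{\partial\Omega}\beta$ enter. It says nothing about the term $\beta w_{dd}$.

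The problem already appears on a flat boundary piece with constant $\beta\neq0$. At a boundary maximum of $\phi$ one has $\phi_i=0$ for $i<d$, hence $w_{di}=\beta\phi_i=0$. The oblique derivative coming from the linearized boundary condition is then $\xi^\varepsilon(\phi)=-(1-\beta^2)\beta w_{dd}$, which has no sign. So Hopf's lemma yields no contradiction, even though any curvature/$\nabla_{\partial\Omega}\beta$ hypothesis is trivially satisfied there. The failure comes from the choice of gradient function, not from the geometry.

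\textbf{The missing idea: an additive corrector.} Take $y=\phi+\beta\,D\eta\cdot Dw$ with $D\eta=\nu_\Omega$ on $\partial\Omega$. The $w_{di}$- and $w_{dd}$-terms then cancel exactly because $w_d=\beta\phi$ (Steps 3 and 5 of the proof of Proposition~\ref{prop:w/o-multiplier}). As a result $\xi^\varepsilon(y)$ contains no second derivatives of $w$, only the second fundamental form, derivatives of $\beta$ and $\eta$, and $\nabla w$. Its sign can then be handled by a curvature-type condition. Alternatively, as in Proposition~\ref{prop:multiplier}, the weight $\rho=1+Kd_{\partial\Omega}$ handles it and removes \eqref{assumption:convexity} altogether.

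The price is that the interior inequality must be redone for $y$ rather than $\phi$. The new terms linear in $D^2w$ coming from $D(\beta D\eta)$ have to be absorbed into the good quadratic Hessian term by completing a square, as in Proposition~\ref{prop:equation-v}.

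\textbf{Where the Lipschitz bound actually enters.} Secondarily, you tie the Lipschitz bound to the barrier comparisons and ask for it uniformly in $h$. In the paper's framework, the barrier step of consistency is carried out with the monotonicity of $T_h$. The Lipschitz bound is used instead for the continuity of $T_h$ along decreasing closed sets, via Arzel\`a--Ascoli (Proposition~\ref{prop:continuity-T}). This gives $\{S_hu\ge\lambda\}=T_h(\{u\ge\lambda\})$ (Proposition~\ref{prop:relation-TS}), which is how one-step estimates for $T_h(\{\varphi\ge\lambda\})$ are converted into the consistency inequalities for $S_h\varphi$. For this the bound must be uniform in the data $E$, while dependence on $h$ is harmless, as the paper explicitly notes.

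Your proposal never says how to pass from $T_h$ to $S_h$. You should either invoke this identity, or argue directly from \eqref{eq:def-Sh} together with the monotonicity of $T_h$. The direct route only loses the endpoint level, which is irrelevant at order $o(h)$.
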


We explain the framework of the work \cite{EG25} of the first author and Giga, as our general framework follows this as well.
Given an initial function $u_0 \in C(\closure{\Omega})$ whose zero-level set equals $\Gamma_0$, we define $u^h(x,h) := S_hu_0(x)$. At every level $\lambda$, the superlevel set is advanced by $T_h=T_{h,-\beta}$; in particular,
\[
    \{u^h(\cdot,h)\geq\lambda\}=T_h(\{u_0\geq\lambda\}).
\]
Repeating this procedure, we have a discrete solution $u^h$ up to the time horizon $T$ as in \eqref{eq:approx-sol}.
We now consider the upper and lower relax limits $\overline{u} = {\limsup_{h\to 0}}^* u^h$ and $\underline{u} = {\liminf_{h\to 0}}_* u^h$ (see Definition~\ref{defn:relax-limit}).
If we can prove that $\overline{u}$ and $\underline{u}$ are respectively a subsolution and supersolution of the level-set equation \eqref{eq:level-set},
then a comparison principle for viscosity solutions (Proposition~\ref{prop:comparison-principle}) together with the trivial inequality $\overline{u}\geq \underline{u}$
implies $\overline{u} = \underline{u}$ as soon as the function operator $S_h$ satisfies the monotonicity, translation invariance, stability, and consistency (Proposition~\ref{prop:BS}).
The most involved property of $S_h$ to prove is the consistency: Roughly speaking, we need to show the relation between the \textit{generator} of the function operator $S_h$ and the function $F$:

\begin{equation}
    \label{eq:intro-consistency}
    \lim_{h\to 0}\frac{S_h\varphi(x) - \varphi(x)}{h} = -F(\nabla\varphi(x), \nabla^2\varphi(x))
\end{equation}
for smooth test functions $\varphi$ with $\nabla\varphi(x)\neq 0$.
To this end, we are led to relate the level sets of $\left\{S_h\varphi\geq \lambda\right\}$ to the set $T_h(\left\{\varphi\geq \lambda\right\})$ for $\lambda\in\R$,
and if we admit this, then the consistency property will be proven as in \cite{EG25}.

This relation between $S_h$ and $T_h$ can be deduced from the continuity property of $T_h$ (Proposition~\ref{prop:relation-TS}),
and this property can be shown as a consequence of the equi-continuity of the solution $w$ to \eqref{eq:intro-discrete-PDE} with respect to the choice of the data $E$ (Proposition~\ref{prop:continuity-T}). Therefore, establishing the equi-continuity of the solution $w$ for the continuity property of $T_h$ is the most important contribution of the paper with detailed analysis, where the assumption \eqref{assumption:convexity} is used indeed critically.

In \cite{EG25} of the first author and Giga, the assumption \eqref{assumption:convexity} is used in order to use the maximum principle on the gradient function, which then implies that the solution $w$ of \eqref{eq:intro-discrete-PDE} is uniformly Lipschitz continuous. More precisely, the assumption \eqref{assumption:convexity} ensures that the gradient function is a subsolution of an elliptic equation up to the boundary. In other words,  the gradient function does not assume maximum on the boundary $\partial\Omega$ due to the assumption \eqref{assumption:convexity}. However, this argument is no longer valid if we do not assume \eqref{assumption:convexity}.


\subsection{The statement of our main result} The result of Proposition \ref{prop:previous} is restrictive in the sense that although the contact angle function $\beta$ is supposed to be prescribed, its derivative
should be bounded by geometric features of the container $\Omega$.
In particular, in the half-space case, all principal curvatures of the boundary are zero, and hence $\beta$ must be constant: This implies that in practice, we cannot prescribe spatially inhomogeneous contact angle condition. This motivates us to explore a more general criterion on $\Omega$ and $\beta$ to guarantee the convergence of the scheme. The following result completely removes the assumption \eqref{assumption:convexity}, which was used to establish the equi-continuity of the approximate solution $w$ for the continuity of $T_h$, which we now state as our main result.

\begin{theorem}\label{thm:main}
Assume that $\Omega$ is a bounded domain in $\R^d$ with smooth boundary.
Assume that $\beta$ is Lipschitz continuous on $\pOmega$ and $\|\beta\|_{C^0(\partial\Omega)}<1$.
Let $\{\beta^h\}_{h>0}$ be any sequence of smooth functions parametrized by the time step $h>0$ converging to $-\beta$ uniformly on $\pOmega$. Let $T_h$ be the set operator defined with the energy $J_{\beta^h}(u)$, and the function operator $S_h$ is defined with this $T_h$ accordingly.
Then, $u^h$ locally uniformly converges to the unique viscosity solution to \eqref{eq:level-set} as $h\to 0$.
\end{theorem}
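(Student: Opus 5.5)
The plan is to follow the Barles--Souganidis framework exactly as in \cite{EG25}. We show that the relaxed limits $\overline{u}=\relaxlimitSup u^h$ and $\underline{u}=\relaxlimitInf u^h$ are a viscosity sub- and supersolution of \eqref{eq:level-set}, respectively. The comparison principle (Proposition~\ref{prop:comparison-principle}) then gives $\overline{u}\le\underline{u}$, and together with the trivial reverse inequality this yields local uniform convergence to the unique viscosity solution. By Proposition~\ref{prop:BS} it suffices to check four properties of $S_h$:
\begin{itemize}
\item monotonicity, which is inherited from the monotonicity of $T_h$ via the comparison of geodesic distances $d_{\Omega,E}\le d_{\Omega,F}$ for $E\supset F$ \cite[Lemma 3.5]{EG25};
\item translation invariance, which is immediate from \eqref{eq:def-Sh};
\item stability, i.e.\ uniform bounds, which comes from comparison with constants;
\item consistency \eqref{eq:intro-consistency}.
\end{itemize}
The replacement of $-\beta$ by smooth approximants $\beta^h$ should be harmless in all of these. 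The point is that the consistency computation only uses $\beta^h\to-\beta$ uniformly together with $\|\beta^h\|_{C^0}\le 1-\delta$ for a fixed $\delta>0$, valid for small $h$.

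For consistency we argue as in \cite{EG25}. Given a smooth test function $\varphi$ with $\nabla\varphi(x)\neq0$, we construct explicit sub- and supersolutions of \eqref{eq:intro-discrete-PDE} near level sets of $\varphi$, including near $\pOmega$ where the oblique condition with $\beta^h$ must hold. Feeding them into $T_h$ lets us compare $T_h(\{\varphi\ge\lambda\})$ with the level sets of $\varphi+h(-F)+o(h)$. Passing from $T_h$ to $S_h$ requires the relation between $\{S_h\varphi\ge\lambda\}$ and $T_h(\{\varphi\ge\lambda\})$ (Proposition~\ref{prop:relation-TS}). That relation follows from the continuity property of $T_h$ (Proposition~\ref{prop:continuity-T}), which in turn reduces to equicontinuity of the minimizers $w$ with respect to the data $E$. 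So the whole theorem reduces to a new uniform Lipschitz estimate for solutions of \eqref{eq:intro-discrete-PDE} with $\beta^h$ in place of $\beta$. This estimate may depend on $h$ and on $\|\beta^h\|_{C^1}$, but not on $E$.

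The main obstacle is this Lipschitz estimate without \eqref{assumption:convexity}. First regularize: replace $\phi$ by $\phi_\varepsilon(p)=\sqrt{\varepsilon^2+|p|^2}$, mollify $d_{\Omega,E}$ (which is $1$-Lipschitz in the geodesic sense, hence Lipschitz with a domain-dependent constant), and solve the resulting uniformly elliptic quasilinear oblique problem classically. Then run a Bernstein argument on a modified gradient function such as
\[
G = \eta(x)\Bigl(\phi_\varepsilon(\nabla w)+\beta^h(x)\,\nabla w\cdot \tilde\nu(x)\Bigr)+ K\, g(x),
\]
where $\tilde\nu$ is a smooth extension of $\nu_\Omega$. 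Here $\eta$ is a positive multiplicative corrector, and $g$ is an additive corrector, for instance a multiple of the distance to $\pOmega$, or of $w$ itself.

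The analysis splits according to where $G$ attains its maximum.
\begin{itemize}
\item At an interior maximum, differentiating the equation and using $\sup|\nabla d|\le C$ bounds $|\nabla w|$ in terms of $h$-dependent constants.
\item At a boundary maximum, $\partial_\nu G\ge0$ must be contradicted. Differentiating the boundary condition tangentially produces terms involving the second fundamental form of $\pOmega$ and $\sgrad\beta^h$. These are precisely the terms that \eqref{assumption:convexity} previously controlled.
\end{itemize}
The boundary case is where the correctors do the work: choose $\eta$ with $\partial_\nu\eta$ very negative and/or $K$ large, so that the corrector contributions dominate those terms. Nondegeneracy $|\beta^h|\le1-\delta$ keeps the combination $\phi_\varepsilon(\nabla w)+\beta^h\nabla w\cdot\tilde\nu$ comparable to $|\nabla w|$. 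I expect the delicate part to be bookkeeping that the bad boundary terms are at most linear in $|\nabla w|$ with coefficients depending on $\|\beta^h\|_{C^1}$ and the curvature of $\pOmega$, so that corrector constants can absorb them.

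Finally, let $\varepsilon\to0$ and the mollification go to zero, using lower semicontinuity of $C_{\beta^h}$ \cite[Proposition 4]{EG24} and uniqueness of minimizers. This transfers the Lipschitz bound to the actual minimizer $w$, uniformly in $E$ for each fixed $h$. That bound gives Proposition~\ref{prop:continuity-T}, and with it Proposition~\ref{prop:relation-TS}, consistency, and the theorem.
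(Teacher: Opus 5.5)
Your proposal is correct and follows essentially the same route as the paper. The paper also uses the Barles--Souganidis reduction, with consistency resting on continuity of $T_h$ and hence on an $E$-independent Lipschitz bound; that bound comes from a Bernstein argument on $z=\rho\,(\phi_\varepsilon+\beta^h D\eta\cdot\nabla w)$ with $\rho=1+Kd_{\partial\Omega}$ near $\pOmega$. This is exactly your $\eta\,(\phi_\varepsilon+\beta^h\nabla w\cdot\tilde\nu)$ with $\partial_\nu\eta=-K$, and the built-in corrector makes the Hessian terms cancel in the oblique derivative, so the boundary error is indeed linear in $|\nabla w|$.

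The extra additive term $Kg$ is unnecessary, and it could not do the job on its own: an $O(1)$ oblique derivative cannot absorb an error of size $C|\nabla w|$. So in your ``and/or'', the multiplicative weight is the essential choice.
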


We remark before we explain our result and its novelty that variant versions of \eqref{assumption:convexity} appeared in the literature interestingly. For instance, on a convex domain with the right-angle condition (which corresponds to the assumption \eqref{assumption:convexity} with $\beta\equiv0$), the Lipschitz continuity of viscosity solutions to \eqref{eq:level-set} is proved in \cite{GOS99}, which is directly related to the existence of stable stationary surfaces of \eqref{eq:target_equation}.
The work \cite{JKMT22} by the second author, Kwon, Mitake, and Tran includes a spatial forcing term (possibly on a nonconvex domain) and provides a sufficient condition on the forcing term and the domain to guarantee the Lipschitz continuity of viscosity solutions, which is a generalization of the condition \eqref{assumption:convexity}. The second author \cite{J23} then extends the conclusions including general angle conditions.

The main result of this paper can be seen as a continuation of this context since our paper obtains the uniform Lipschitz regularity of the solution $w$ of \eqref{eq:intro-discrete-PDE} (see Proposition \ref{prop:a-priori-estimate}) that was proved in \cite{EG25} under the assumption \eqref{assumption:convexity}. Our main result removes the assumption \eqref{assumption:convexity}, and thus, Theorem \ref{thm:main} applies on a nonconvex domain as well.

In this context, the technical novelty of our main result (see Theorem \ref{thm:main} and Proposition \ref{prop:a-priori-estimate}) is as follows: Instead of estimating the gradient function itself (denoted by $\phi$), we alternatively prove a uniform estimate for a suitable gradient $z:=\rho(\phi + \psi)$ (with an additive corrector $\psi$ and a multiplicative weight function $\rho$) in order to avoid the use of the assumption \eqref{assumption:convexity}. The weight function $\rho$ is taken so that the corrected gradient $z$ cannot attain a maximum on the boundary $\partial\Omega$, and consequently, the function $\rho$ depends on the shape of $\partial\Omega$ naturally. We next derive an elliptic partial differential inequality in $\Omega$ which is satisfied by the function $z$ and a uniform constant function. The maximum principle is thus applied so that the corrected gradient $z$ (as well as the original gradient $\phi$ up to a multiplicative constant) is bounded by a uniform constant.

\subsection{Literature}

We review previous works (by no means a complete list) on MMSs for the mean curvature flow. 
We first consider the case without contact with a container boundary.
Almgren--Taylor--Wang \cite{Almgren93} introduced a MMS for mean curvature flow in the whole space $\mathbb{R}^d$, starting from a bounded set $E_0$ of finite perimeter.  They proved the existence of a limiting flow, called a \textit{flat $\Phi$-curvature flow}, and showed that it agrees with the smooth mean curvature flow as long as the latter exists.
Luckhaus--Sturzenhecker \cite{LS95} studied the same MMS and proved convergence to a distributional solution of mean curvature flow under the additional assumption that the perimeters of the discrete solutions converge to the perimeter of the limiting flow.
Chambolle \cite{Chambolle04} proposed a level-set formulation of the Almgren--Taylor--Wang scheme.  In the present notation, the corresponding energy is given by \eqref{eq:mms} with $\beta\equiv 0$.  This formulation yields a well-defined discrete operator despite the possible non-uniqueness of set minimizers thanks to the strict convexity of the energy.
Chambolle proved that the resulting discrete flow converges, in the $L^1$ sense, to the level-set mean curvature flow up to the time horizon on which the latter is uniquely defined.
The works of Eto--Giga--Ishii \cite{EGI12-1,EGI12-2} are foundational earlier works for the approach adopted in the present paper.  They revisited Chambolle's scheme from the viewpoint of morphological operators (see \cite{C03}), and introduced a function operator associated with the discrete scheme.  By exploiting a sup-inf representation of this operator, they proved convergence of the approximate solutions to the unique viscosity solution of the corresponding level-set mean curvature equation.
Chambolle--De Gennaro--Morini \cite{CDM24} developed a MMS for anisotropic and inhomogeneous mean curvature flows with mobility and time-dependent forcing.  They proved convergence to level-set/viscosity solutions and, in low dimensions and under a suitable energy convergence assumption, convergence to distributional solutions in the sense of Luckhaus--Sturzenhecker.
The same authors \cite{CDGM26} studied a fully discrete variant of the implicit variational scheme for crystalline mean curvature flow.  Their scheme combines the time-discrete minimizing-movement approach of Almgren--Taylor--Wang and Luckhaus--Sturzenhecker with a spatial discretization based on discrete total variation energies.  A key point of their construction is a suitable discrete signed-distance/redistancing operator, which avoids the drift and pinning phenomena arising in more direct fully discrete implementations.  They proved convergence to the corresponding distributional crystalline curvature flow in arbitrary dimension for a large class of purely crystalline anisotropies.

We next recall related works in the presence of contact with a boundary.
Bellettini--Kholmatov \cite{BK18} studied a MMS for mean curvature flow of droplets in the half-space $\mathbb{R}^d_+$ with a prescribed, possibly non-constant, contact angle.  They proved the existence of generalized minimizing movements (GMMs) and showed their compatibility with a distributional formulation of mean curvature flow with prescribed contact angle.  This work motivated the subsequent works by the first author and Giga \cite{EG24,EG25}.
Kholmatov \cite{K25-2} subsequently extended this set-theoretic GMM framework to forced anisotropic curvature flows of droplets in a half-space, based on anisotropic capillary energies and prescribed anisotropic Young laws.
The same author \cite{Kholmatov25} proved consistency of GMMs with the smooth mean curvature flow of droplets in $\mathbb{R}^3_+$ with prescribed contact angle.  In particular, under suitable regularity assumptions on the initial droplet and on the contact angle function $\beta$, including $\beta\in C^{1+\alpha}(\partial\R^3_+)$ and $\|\beta\|_{C^0(\partial\R^3_+)}<1$, the GMM agrees with the corresponding smooth flow as long as the latter exists.



\subsection*{Organization of the paper}
This paper is organized as follows. In Section~\ref{sec:preliminaries},
we recall basic definitions and properties of the geodesic signed distance function, viscosity solutions, and set and function operators. 
Section~\ref{sec:lipschitz-bound} is devoted to the derivation of a Lipschitz estimate of approximate solutions. We emphasize that this section presents a main contribution of this paper.
Finally, in Section~\ref{sec:convergence}, we perform the proof of Theorem~\ref{thm:main}.

\section{Preliminaries}
\label{sec:preliminaries}
\subsection{Geodesic distance}
In this paper, we will work with the \textit{geodesic signed distance function} $d_{\Omega,E}$ to a relatively closed set $E$ in $\Omega$
instead of the ordinary signed distance function $d_E$. We give the precise definition of this function and mention some gradient bound.

\begin{defn}[Geodesic distance between points]
    For $x,y\in\closure{\Omega}$,
    a Lipschitz continuous map $\ell:[0,1]\to\closure{\Omega}$ is called a \emph{path} between $x$ and $y$, which is written as $\ell\in\operatorname{Path}(x,y)$,
    if and only if $\ell(0) = x$ and $\ell(1) = y$. The geodesic distance $\operatorname{dist}_\Omega(x,y)$ between $x$ and $y$ is defined by:

    \begin{equation}
        \label{eq:geodesic-distance}
    \operatorname{dist}_\Omega(x,y) := \inf\left\{\int_0^1|\ell'(t)|\,dt\biggm| \ell\in\operatorname{Path}(x,y)\right\}.
    \end{equation}
\end{defn}

\begin{defn}[Geodesic distance to sets]
    For each $E\subset\closure{\Omega}$, the \emph{geodesic distance function} $\operatorname{dist}_\Omega(\cdot,E)$ to the set $E$ is defined by:

    \[
    \operatorname{dist}_\Omega(x,E) := \inf\left\{\operatorname{dist}_{\Omega}(x,y)\biggm| y\in E\right\}.
    \]
\end{defn}

\begin{defn}[Geodesic signed distance to sets]
    For each $E\subset\closure{\Omega}$, the \emph{geodesic signed distance function} $d_{\Omega,E}$ to the set $E$ is defined by:

    \begin{equation*}
        d_{\Omega,E}(x) :=
        \begin{cases}
            -\operatorname{dist}_{\Omega}(x,\Omega\setminus E) & \qquad \mbox{if}\quad x\in E,\\
            \operatorname{dist}_\Omega(x,E) & \qquad \mbox{if}\quad x\notin E.
        \end{cases}
    \end{equation*}
\end{defn}

\begin{remark}
    In the case when $\Omega$ is convex, $d_{\Omega,E}$ coincides with the ordinary signed distance function $d_E$
    since the segment connecting $x$ and $y$ is always included in $\closure{\Omega}$,
    and the corresponding map $\ell\in\operatorname{Path}(x,y)$ attains the minimum of \eqref{eq:geodesic-distance}.
\end{remark}

\begin{proposition}
    \label{prop:geodesic-uniform-bounded}
    Assume that $\Omega$ is a bounded Lipschitz domain in $\R^d$. Then, there exists a constant $C_\Omega > 0$ such that
    \[
    |d_{\Omega,E}(x) - d_{\Omega,E}(y)| \leq C_\Omega|x-y|\qquad\forall x,y\in\closure{\Omega}.
    \]
    In particular, $d_{\Omega,E}$ is differentiable a.e. in $\closure{\Omega}$, and it holds that $|\nabla d_{\Omega,E}(x)|\leq C_\Omega$ for a.e. $x\in\closure{\Omega}$.
\end{proposition}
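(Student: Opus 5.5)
The plan is to reduce the statement to a property of the intrinsic geodesic distance $\operatorname{dist}_\Omega$: we show it is comparable to the Euclidean distance, i.e.
\[
\operatorname{dist}_\Omega(x,y)\le C_\Omega|x-y|\qquad\forall x,y\in\closure{\Omega},
\]
and then deduce the Lipschitz bound for $d_{\Omega,E}$ from the triangle inequality for $\operatorname{dist}_\Omega$.

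\textbf{Step 1 (metric comparison).} Since $\Omega$ is a bounded Lipschitz domain, $\partial\Omega$ is covered by finitely many open cylinders $U_1,\dots,U_N$. In each $U_i$, after a rigid motion, $\closure{\Omega}\cap U_i$ is the region above the graph of a Lipschitz function $g_i$ with constant $L$. Choose a Lebesgue number $r>0$ so that any two points of $\closure{\Omega}$ within distance $r$ of each other and near $\partial\Omega$ lie in a single, slightly shrunk cylinder.
\begin{itemize}[label={}]
\item \emph{Local case, $|x-y|<r$ inside a chart.} Write $x=(x',x_d)$ and $y=(y',y_d)$. Move $x$ vertically up by $L|x'-y'|$, go horizontally in $x'$ to above $y'$, then come down to $y$. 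Every point of this path stays in $\closure{\Omega}$ because $g_i$ is $L$-Lipschitz, and the path has length at most $(1+2L)|x-y|+|x_d-y_d|\le C|x-y|$. If one of the points is far from the boundary, the straight segment works once $r$ is below the distance from the interior compact set to $\partial\Omega$.
\item \emph{Global case, $|x-y|\ge r$.} Use that $\Omega$ is connected and bounded, so $\operatorname{diam}_{\operatorname{dist}_\Omega}(\closure{\Omega})=:D<\infty$. To see this, cover $\closure{\Omega}$ by finitely many such chart neighbourhoods and interior balls; connectedness lets us chain them with paths of bounded length. Then $\operatorname{dist}_\Omega(x,y)\le D\le (D/r)|x-y|$.
\end{itemize}
Taking $C_\Omega=\max(C,D/r)$ gives the comparison.

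\textbf{Step 2 (Lipschitz bound for $d_{\Omega,E}$).} Both $\operatorname{dist}_\Omega(\cdot,E)$ and $\operatorname{dist}_\Omega(\cdot,\Omega\setminus E)$ are $1$-Lipschitz with respect to $\operatorname{dist}_\Omega$, by the triangle inequality. Hence each is $C_\Omega$-Lipschitz in the Euclidean sense. For $x,y$ on the same side of $E$ we are done. For $x\in E$ and $y\notin E$, take the near-optimal path from $x$ to $y$. By continuity it passes through a point $z$ at the interface, in the sense that near $z$ there are points of both $E$ and $\closure{\Omega}\setminus E$. Splitting the path at $z$ gives
\[
|d_{\Omega,E}(x)|+|d_{\Omega,E}(y)|\le \operatorname{dist}_\Omega(x,y)+\varepsilon,
\]
so $|d_{\Omega,E}(x)-d_{\Omega,E}(y)|\le C_\Omega|x-y|$. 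This step needs some care with the convention $\Omega\setminus E$ versus $\closure{\Omega}\setminus E$, handled by taking $z$ as the first exit point of the path from $E$.

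\textbf{Step 3 (differentiability).} Extend $d_{\Omega,E}$ to a Lipschitz function on $\R^d$ (McShane extension) and apply Rademacher's theorem. At points of differentiability in $\Omega$ we get $|\nabla d_{\Omega,E}|\le C_\Omega$. Since $\partial\Omega$ is Lebesgue-null, the statement holds a.e. in $\closure{\Omega}$.

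The main obstacle is Step 1, specifically the local chart construction near the boundary together with the uniform patching. I would handle the patching through the finite cover and Lebesgue number as described.
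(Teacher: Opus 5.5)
Your proposal is correct and follows essentially the paper's route. The paper gets $\operatorname{dist}_\Omega(x,y)\le C_\Omega|x-y|$ by citing that bounded Lipschitz domains are uniform (hence quasiconvex) and then applies Rademacher's theorem; you prove this quasiconvexity by hand (graph charts, a Lebesgue number, finite geodesic diameter), which makes the argument self-contained.

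You should, however, state explicitly that your first-exit-point step uses that $E$ is closed in $\closure{\Omega}$ with $E\neq\emptyset,\closure{\Omega}$. This holds for $E=\{u\ge\lambda\}$ with $u\in C(\closure{\Omega})$, and without it the claim itself fails: for $\Omega=B_1(0)$ and the set $E=\Omega\cap\{x_1\ge 0\}$, which is relatively closed in $\Omega$, one has $d_{\Omega,E}(e_1)=0$ but $d_{\Omega,E}(te_1)=-t\to-1$ as $t\uparrow 1$.
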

\begin{proof}
    This can be deduced from the fact that any bounded Lipschitz domain is uniform (see \cite{BH07}, for instance).
    The differentiability of $d_{\Omega,E}$ follows from Rademacher's theorem (see \cite[Theorem 3.2]{EG15}).
\end{proof}

\begin{remark}
    For the classical signed distance functions, it is well known that $|\nabla d_E| = 1$ at points of differentiability.
    For a $C^{2,1}$-domain $\Omega$, we have the bound $\|\nabla d_{\Omega,E}\|_{L^\infty(\Omega)}\leq C_\Omega$ as above.
    Note that we have $|\nabla d_{\Omega,E}(x)|\leq 1$ when differentiable since we can take $\delta > 0$ so small that
    $\operatorname{dist}_{\Omega}(x,y) = |x-y|$ for every $y\in B_\delta(x)$.
\end{remark}

\subsection{Viscosity solutions}\label{sec:viscosity-sol}
In this section, we recall the notion of viscosity solutions and its well-known properties which will be used in the sequel.
For later use, we introduce functions $F:(\R^d\setminus\{0\})\times \mathbb S^{d}\to \R$ and $B:\partial\Omega\times \R^d\to\R$,
where $\mathbb S^{d}$ denotes the set of all symmetric matrices in $\R^{d\times d}$, and $I_d\in \R^{d\times d}$ denotes the identity matrix.
Using these functions, we consider the following initial boundary problem:

\begin{equation}
\begin{aligned}
\label{eq:level-set-FB}
    u_t + F(\nabla u, \nabla^2 u) &= 0\qquad\mbox{in}\quad \Omega\times(0,T),\\
    B(\cdot, \nabla u) &= 0\qquad\mbox{on}\quad \partial\Omega\times (0,T),\\
    u(\cdot,0) &= u_0\,\,\,\quad\mbox{in}\quad\overline{\Omega},
    \end{aligned}
\end{equation}
where $\nabla^2 u$ denotes the Hessian matrix of $u$, say $(\nabla^2 u)_{ij} := \partial^2_{x_ix_j}u$ for $1\le i,j\le d$.

We note that the level-set equation \eqref{eq:level-set} can be represented as \eqref{eq:level-set-FB} by choosing
\begin{equation}
\label{eq:FB}
    \begin{aligned}
    F(p,X) &:= -\operatorname{tr}\left(\left(I_d - \frac{p\otimes p}{|p|^2}\right)X\right)\qquad\,\,\mbox{for}\quad p\in\R^d\setminus\{0\},\, X\in \mathbb S^d,\\
    B(x, p) &:= p \cdot \nu_\Omega(x) + \beta (x) |p|\qquad\quad\qquad\mbox{for}\quad x\in\overline{\Omega},\, p\in\R^d.
    \end{aligned}
\end{equation}

We now introduce the notion of viscosity solutions to \eqref{eq:level-set-FB}.
Let us begin with recalling a family of test functions from \cite[\S 2.1.3]{G06}.

\begin{defn}[Compatible test functions]
    Let $\mathcal F$ be the set of smooth functions on the half line defined by
    \[
    \mathcal F := \left\{f\in C^2[0,\infty)\,\biggm| f(0) = f'(0) = f''(0) = 0,\quad f''(r) > 0\quad\forall r > 0\right\}.
    \]
    A function $\varphi\in C^2({\Omega}\times(0,T))$ is compatible with the function $F$
    provided that for any $(\hat x, \hat t)\in\Omega\times(0,T)$ with $\nabla\varphi(\hat x, \hat t) = 0$,
    there exist a constant $\delta > 0$, a function $f \in \mathcal F$, and a modulus $\omega\in C[0,\infty)$ with $\lim_{\sigma\to 0}\omega(\sigma)/\sigma =0$
    satisfying
    \[
    |\varphi(x,t) - \varphi(\hat x, \hat t) - \varphi_t(\hat x, \hat t)(t - \hat t)| \leq f(|x- \hat x|) + \omega(|t - \hat t|)
    \]
    for every $(x,t) \in\Omega\times(0,T)$ with $|x-\hat x|\le \delta$ and $|t - \hat t|\le \delta$.
    The set of all compatible test functions with $F$ is denoted by $C^2_F(\Omega\times(0,T))$.
\end{defn}


\begin{defn}[Semi-continuous envelopes]
    The lower (resp., upper) semi-continuous envelope $F_*$ (resp., $F^*$) of $F:\R^d\setminus\{0\}\times\mathbb S^d\to\R$ is defined by
    \begin{align*}
        F_*(p,X) &:= \liminf_{\varepsilon\to 0}\left\{F(q,Y)\biggm| |p - q| < \varepsilon,\quad \|X-Y\|_2 < \varepsilon\right\}\\
        resp.,\quad F^*(p,X) &:= \limsup_{\varepsilon\to 0}\left\{F(q,Y)\biggm| |p - q| < \varepsilon,\quad \|X-Y\|_2 < \varepsilon\right\},
    \end{align*}
    where $\|X\|_2 := \sqrt{\sum_{i,j=1}^d x_{ij}^2}$ for $X = (x_{ij})_{1\leq i,j\leq d}\in \R^{d\times d}$.
\end{defn}

\begin{defn}[Viscosity solutions]
\label{dfn:viscosity-sol}
A function $u:\overline{\Omega}\times(0,T)\to \R$ is called a viscosity subsolution (resp., supersolution) to \eqref{eq:level-set-FB} provided that $u^*(x,t) < \infty$ (resp., $u_*(x,t) > -\infty$) for all $(x,t)\in\overline{\Omega}\times(0,T)$ and for any test function $\varphi\in C^2_F(\overline{\Omega}\times(0,T))$ and $(\hat x, \hat t)\in \overline{\Omega}\times(0,T)$ such that $u^*-\varphi$ takes a local maximum (resp., local minimum) at $(\hat x,\hat t)$, it holds that

\begin{equation}
\label{eq:def-viscosity-sol}
    \begin{cases}
        \varphi_t(\hat x,\hat t) + F_*(\nabla\varphi(\hat x,\hat t), \nabla^2\varphi(\hat x,\hat t)) \le 0 &\qquad\mbox{if}\quad \nabla\varphi(\hat x,\hat t) \neq 0\\
        (resp.,\, \varphi_t(\hat x,\hat t) + F^*(\nabla\varphi(\hat x,\hat t), \nabla^2\varphi(\hat x,\hat t)) \ge 0 &\qquad\mbox{if}\quad \nabla\varphi(\hat x,\hat t) \neq 0),\\
        \varphi_t(\hat x,\hat t)\le 0&\qquad \mbox{if}\quad \nabla\varphi(\hat x,\hat t) = 0\\
        (resp.,\, \varphi_t(\hat x, \hat t)\ge 0 &\qquad\mbox{if}\quad \nabla \varphi(\hat x, \hat t) = 0),
    \end{cases}
\end{equation}
if $\hat x\in \Omega$ and either \eqref{eq:def-viscosity-sol} or $B_*(\hat x, \nabla\varphi(\hat x,\hat t))\le 0$ (resp., $B^*(\hat x,\nabla\varphi(\hat x, \hat t)) \ge 0$) holds if $\hat x\in\partial\Omega$.

A function $u$ is called a viscosity solution to \eqref{eq:level-set-FB} if $u$ is a viscosity subsolution and supersolution.
\end{defn}

\begin{remark}
Definition~\ref{dfn:viscosity-sol} is based on the notion of $\mathcal F$-solutions (see e.g., \cite[Definition 2.3.7]{G06}).
For the special choice of $F$ in \eqref{eq:FB}, we have that
\begin{equation}
    \label{eq:compatible-set}
C^2_F(\Omega\times(0,T)) = \left\{\varphi\in C^2(\Omega\times(0,T))\biggm|\nabla\varphi(z) = 0\,\Longrightarrow\,\nabla^2\varphi(z) = O\right\}.
\end{equation}
Therefore, for test functions $\varphi$ with $\nabla\varphi(\hat x,\hat t) = 0$,
we may assume that $\nabla^2\varphi(\hat x, \hat t) = O$ for the confirmation of either $\varphi_t(\hat x,\hat t) \leq 0$ or $\varphi_t(\hat x,\hat t) \geq 0$.
We refer the reader to \cite[Remark 2.1.6, Proposition 2.1.8]{G06} for details.
\end{remark}

\begin{defn}[Degenerate ellipticity and geometricity]
    A function $F:(\R^d\setminus\{0\})\times\mathbb S^{d}\to\R$ is said to be degenerate elliptic if for any $p\in\R^d$ and $X,Y\in\mathbb S^d$ satisfying $X\le Y$, then it holds that
    \[F(p,X) \ge F(p,Y),
    \]
    where the condition $X\le Y$ means that $Y - X$ is positive semi-definite.
    A function $F:(\R^d\setminus\{0\})\times\mathbb S^d\to\R$ is said to be geometric if for any $\lambda > 0$, $p\in\R^d\setminus\{0\}$ and $X\in\mathbb S^d$, then it holds that
    \[F(\lambda p, \lambda X + \sigma p\otimes p) = \lambda F(p,X)\qquad\mbox{for all }\sigma\in\R.
    \]
\end{defn}

\begin{remark}
    The viscosity solution describes a weak notion of solutions in the sense that any classical solution satisfies the partial differential inequalities in \eqref{eq:def-viscosity-sol} due to the maximum principle as soon as $F$ is degenerate elliptic.
\end{remark}

\begin{remark}
    The geometric property of $F$ is important to apply the level-set method to geometric flows like the mean curvature flow \eqref{eq:target_equation}.
    This property guarantees an invariant property of the level sets yielded from the solution of \eqref{eq:level-set-FB} in regard to the choice of the initial condition $u_0$
    satisfying $\{u_0 = 0\} = \Gamma_0$. See \cite[Theorem 4.2]{B99}, for instance.
\end{remark}

We now state an important property of viscosity solutions which guarantees its uniqueness.

\noindent
\begin{proposition*}[Comparison principle]
    \label{prop:comparison-principle}
Let $u$ (resp., $v$) be a bounded viscosity subsolution (resp., supersolution) to \eqref{eq:level-set-FB}.
Assume that $F$ is degenerate elliptic and geometric.
Then, it holds that $u \le v$ in $\overline{\Omega}\times(0,T)$ provided that $u(\cdot,0)\le v(\cdot,0)$ in $\overline{\Omega}$. 
\end{proposition*}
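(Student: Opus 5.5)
The plan is to follow the standard doubling-of-variables argument for geometric equations with nonlinear oblique boundary conditions, in the form of Ishii--Sato and of Giga's monograph \cite{G06}. We argue by contradiction. Suppose $\sup_{\overline{\Omega}\times(0,T)}(u-v)=:\theta>0$. First we replace $u$ by $u-\eta/(T-t)$ for a small $\eta>0$. This makes $u$ a strict subsolution and sends the penalized supremum away from $t=T$.

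Next we build a test function adapted to the boundary condition $p\cdot\nu_\Omega+\beta|p|=0$. Since $\|\beta\|_{C^0}<1$ and $\beta$ is Lipschitz (after a regularization $\beta_\varepsilon$ with a controlled error), we fix a $C^2$ extension of $\nu_\Omega$ and of $\beta$ near $\pOmega$, and we use the Ishii--Sato construction of a function $g_\varepsilon(x,y,\cdot)$. Roughly,
\[
\Phi(x,y,t,s)=u(x,t)-v(y,s)-\frac{|x-y|^2}{2\varepsilon}-\frac{(t-s)^2}{2\sigma}+\text{(oblique corrector in the direction }\nu_\Omega)+\text{(a term } \delta\,\mathrm{d}(x)\text{ making the boundary inequalities strict)}.
\]
The corrector is chosen so that at a boundary maximum point $\hat x\in\pOmega$ the spatial gradient $p$ satisfies $B(\hat x,p)>0$. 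In the same way, $B(\hat y,q)<0$ at $\hat y\in\pOmega$. This rules out the boundary alternative in Definition~\ref{dfn:viscosity-sol}, so the interior inequalities \eqref{eq:def-viscosity-sol} must hold at the maximizers. This uses $|\beta|<1$ to keep the oblique direction uniformly transversal, and the Lipschitz bound on $\beta$ to control the $x$--$y$ cross terms by $O(|x-y|)$.

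The main obstacle is the singularity of $F$ at $p=0$, and we handle it in two steps. If the maximizers have $\hat x\neq\hat y$ or a nonvanishing gradient, we apply the parabolic Crandall--Ishii lemma to obtain $(a,p,X)$ and $(b,q,Y)$ in the closed semijets with $a-b=0$ up to the corrections, $p-q=O(|\hat x-\hat y|^2/\varepsilon)+O(\delta)$, and $X\le Y+$ small terms. Degenerate ellipticity and continuity of $F$ away from $p=0$ then yield the usual contradiction with the strict-subsolution gap $\eta/T^2$. If instead $\hat x=\hat y$ and the gradient vanishes, we replace the quadratic $|x-y|^2/(2\varepsilon)$ by $f(|x-y|)/\varepsilon$ with $f(r)=r^4$, which lies in $\mathcal F$, as in the $\mathcal F$-solution theory \cite[Ch.~3]{G06}. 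By \eqref{eq:compatible-set} the test functions in $x$ and in $y$ are compatible with $F$, and the definition then gives $\varphi_t\le0$ and $\psi_t\ge0$, which again contradicts the $\eta$-gap. Geometricity is used here to justify the $r^4$ penalization: $F$ is invariant under the rescalings implicit in the $\mathcal F$ framework, and it is bounded on bounded sets of $\{|p|\ge c\}$. Boundedness of $u$ and $v$ gives $|\hat x-\hat y|^4/\varepsilon\to0$ and $|\hat t-\hat s|^2/\sigma\to 0$. Combined with $u(\cdot,0)\le v(\cdot,0)$, this keeps the maximizers away from $t=0$ for small parameters.

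Finally, we let $\varepsilon$, $\sigma$, $\delta$ and the regularization parameter tend to $0$ in the appropriate order: first $\sigma$, then $\varepsilon$, then $\delta$, then the regularization of $\beta$. The correction terms vanish, so $\theta\le0$, a contradiction; letting $\eta\to0$ gives $u\le v$. The most delicate point is making the oblique corrector compatible with the $r^4$ penalization near boundary points where the gradient degenerates. There I would follow Sato's treatment of geometric equations with capillary-type boundary conditions, and use the Lipschitz bound on $\beta$ to absorb the cross terms.
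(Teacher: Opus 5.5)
You have a genuine gap. The paper does not prove this statement itself. It imports it from \cite{EG25}, which rests on Barles' comparison theorem \cite[Theorem 3.1]{B99}, and it holds only under the hypotheses of the numbered version: a bounded $C^{2,1}$ domain, $\beta$ Lipschitz, and $\|\beta\|_{C^0(\partial\Omega)}<1$. You use these implicitly. Your outline is in the same doubling-of-variables family as Barles' argument, but the step that actually constitutes his result is deferred (``I would follow Sato's treatment''): a test function compatible both with the nonlinear oblique condition and with the singularity of $F$ at $p=0$. The case analysis you build around that step does not go through without it. To begin with, you cannot start from $|x-y|^2/(2\varepsilon)$ and switch to $|x-y|^4/\varepsilon$ only after finding $\hat x=\hat y$ with vanishing gradient, because the maximizers depend on the penalization. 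With the quadratic penalization, the Crandall--Ishii lemma allows $p=q=0$ with $\|X\|,\|Y\|\sim 1/\varepsilon$, and then $F^*(0,Y)-F_*(0,X)$ can be of order $1/\varepsilon$. So the quartic must be fixed from the outset.

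Even with the quartic fixed, the singular case is harmless without a boundary only because $p=q$, and because the gradient and Hessian of $|x-y|^4/\varepsilon$ degenerate together, exactly at $x=y$. Your $\delta d(x)$ term and oblique corrector destroy both facts. Ignoring the corrector, take an interior maximizer where the gradient of $|x-y|^4/\varepsilon$ is cancelled by that of the $\delta$-term. There $|\hat x-\hat y|\sim(\delta\varepsilon)^{1/3}$ and $p=0$, while the Hessian is of order $\delta^{2/3}\varepsilon^{-1/3}$. This blows up in your order of limits, where $\varepsilon\to0$ before $\delta\to0$. At such a point the $x$-test function is not in $C^2_F$, and the envelope inequality only gives $a\lesssim\|X\|$. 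Near such points $p\neq0$ is tiny, while $\hat p:=p/|p|$ and $\hat q:=q/|q|$ can be far apart. ``Continuity of $F$ away from $p=0$'' then gives no bound on $F(q,Y)-F(p,X)$, whose error is of size $\|X\|\,|\hat p\mp\hat q|$. Nor can you simply drop the strictness term. Since $B(x,0)=0$, at a boundary maximizer with vanishing test gradient the boundary alternative in Definition~\ref{dfn:viscosity-sol} holds trivially and yields nothing.

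What is missing is a test function built from $p\cdot\nu_\Omega+\beta|p|$ with two properties. It must force the strict oblique inequalities at boundary maximizers. It must also keep the error caused by $p\neq q$ under control uniformly, including where $|p|\to0$. In particular, the Hessians must become small whenever the gradients do, so any strictness term has to be scaled against $\varepsilon$ rather than sent to zero afterwards. Building such a function with only $\Omega\in C^{2,1}$ and $\beta$ Lipschitz is exactly what \cite[Theorem 3.1]{B99} supplies, which is why the paper just cites it.

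Two smaller points. First, what licenses the $r^4$ penalization is the bound $|F(p,X)|\le C\|X\|$ uniformly in $p\neq0$, not geometricity. Second, if you regularize $\beta$ and send the regularization to zero last, the margin $\delta(1-|\beta|)$ is additive. It cannot absorb the error $\|\beta-\beta_r\|_\infty|p|$ once $\delta\to0$, since $|p|$ is unbounded in $\varepsilon$.
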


We now recall from \cite[Theorem 2.2]{EG25} the comparison principle for \eqref{eq:level-set-FB}.
\begin{proposition}\label{prop:comparison-principle}
    Assume that $\Omega$ is a bounded $C^{2,1}$-domain, $\|\beta\|_{C^0(\pOmega)} < 1$, and $\beta$ is Lipschitz continuous.
    Then, for the choice of $F$ and $B$ as in \eqref{eq:FB}, the target level-set equation \eqref{eq:level-set-FB} satisfies the comparison principle.
\end{proposition}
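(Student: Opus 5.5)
The plan is the standard doubling-of-variables argument for oblique derivative problems, following Barles' and Sato's treatment of geometric equations with nonlinear Neumann conditions, as recorded in \cite[Theorem 2.2]{EG25}. We argue by contradiction: assume $\sup_{\closure{\Omega}\times(0,T)}(u-v)=:\theta>0$. After subtracting $\eta/(T-t)$ from $u$, the supremum is attained in the interior of the time interval, and by the initial ordering it is not attained at $t=0$.

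Because $\beta$ is only Lipschitz and $\|\beta\|_{C^0}<1$, the boundary condition $B(x,p)=p\cdot\nu_\Omega+\beta|p|$ is uniformly oblique. First extend $\beta$ to a Lipschitz function on a neighbourhood of $\closure{\Omega}$. Then build a test function
\[
\Phi(x,y,t,s)=u(x,t)-v(y,s)-\frac{|x-y|^4}{4\varepsilon}-\beta\Big(\frac{x+y}{2}\Big)\,\frac{|x-y|^2}{\varepsilon}\,(\cdots)-\frac{(t-s)^2}{2\delta}+\cdots,
\]
or more cleanly use the smooth test function $\varepsilon^{-1}w(x,y)$ with $w\sim|x-y|^2$, chosen so that at boundary points the oblique condition cannot hold. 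On $\partial\Omega$ this means $B(x,\nabla_x w)>0$ for the subsolution and $B(y,-\nabla_y w)<0$ for the supersolution. This construction is the classical one for capillary conditions. Its derivative terms involve $\nabla\beta\,|x-y|^2/\varepsilon$, which tend to zero, and this is why Lipschitz continuity of $\beta$ suffices. We also add a term $-C d(x)-C d(y)$, with $d$ the smooth signed distance to $\partial\Omega$ (available since $\Omega$ is $C^{2,1}$), to absorb the $O(|x-y|)$ error in $\nu_\Omega(x)$ versus $\nu_\Omega(y)$.

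At a maximum point, the boundary alternatives in Definition~\ref{dfn:viscosity-sol} are excluded by this design. We then use the Crandall--Ishii lemma (parabolic version) to obtain jets $(a,p,X)$ and $(b,q,Y)$ with $a-b\ge c\eta/T^2>0$ and matrix inequalities. Here $p$ and $q$ differ by $O(\text{small})$, and $X\le Y+o(1)$ after using the $|x-y|^4$ structure.

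The degenerate-gradient case is handled with the quartic penalty and $\mathcal F$-solutions, giving $p=q=0$ and $X=Y=O$ via \eqref{eq:compatible-set}. If the gradients are nonzero, geometricity and continuity of $F$ away from $p=0$ give $F(p,X)-F(q,Y)\ge -o(1)$. This contradicts $a-b>0$ as $\varepsilon\to0$. The step I expect to be the main obstacle is verifying that the corrected test function makes both boundary inequalities strict while keeping the Hessian errors small enough. The term from $\nabla^2(\beta(\cdot)|x-y|^2)$ requires care when $\beta$ is merely Lipschitz; I would first mollify $\beta$ at scale $\varepsilon^{1/2}$ and track the errors.
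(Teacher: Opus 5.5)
\paragraph{The paper's route.} The paper does not prove this statement itself. It quotes it from \cite[Theorem 2.2]{EG25}, which is Barles' comparison theorem \cite[Theorem 3.1]{B99} for geometric equations with nonlinear Neumann conditions. The only work is therefore to check that $F$ is geometric and degenerate elliptic, and that $B(x,p)=p\cdot\nu_\Omega(x)+\beta(x)|p|$ meets the structural assumptions there: positive $1$-homogeneity in $p$, uniform obliqueness $B(x,p+\lambda\nu_\Omega)-B(x,p)\ge(1-\|\beta\|_{C^0(\pOmega)})\lambda$ for $\lambda>0$, Lipschitz dependence on $x$, and a $C^{2,1}$ boundary. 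You instead re-derive that theorem by doubling variables. The step you leave open, the construction of the doubled test function, is exactly the content of that theorem, so your outline has a genuine gap there. Two of the concrete choices you do make also fail.

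\paragraph{First problem: the penalty.} It cannot be of the form $w\sim|x-y|^2$. Suppose $w(x,y)=\langle M(x)(x-y),x-y\rangle+o(|x-y|^2)$ with $M$ continuous, and consider maximizers $\bar x,\bar y\in\pOmega$ with $\bar x-\bar y\approx\pm t\tau$, $\tau$ tangent. Ruling out $B(\bar x,p)\le0$ for both signs forces $\beta|M\tau|\ge|\langle\nu_\Omega,M\tau\rangle|$. Ruling out $B(\bar y,q)\ge0$ forces $\beta|M\tau|\le-|\langle\nu_\Omega,M\tau\rangle|$. Hence $M\tau=0$ wherever $\beta\neq0$, and $w$ does not penalize tangential separation. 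The same parity argument excludes any leading part that is an even function of $x-y$, such as $|x-y|^4$. You therefore need a non-even homogeneous profile, for instance $\varepsilon^{-1}|x-y|^4g_z\bigl(\tfrac{x-y}{|x-y|}\bigr)$, with $g_z$ built from $\beta(z)$ and $\nu_\Omega(z)$ so that its level sets meet the tangent plane at the prescribed angle. Its Hessian involves $D^2\beta$, so for Lipschitz $\beta$ a real regularization argument is needed. The defect $|\beta-\beta_\delta|\,|p|\lesssim\delta|p|$ must be absorbed by a normal margin, and the cost of that margin must be balanced against $|D^2\beta_\delta|\lesssim\delta^{-1}$. ``Mollify at scale $\varepsilon^{1/2}$ and track the errors'' does not show that this balance closes.

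\paragraph{Second problem: the boundary correction.} Adding $-Cd(x)-Cd(y)$ with a fixed $C$ moves $p$ and $q$ by roughly $C\nabla d$ in opposite directions, so $|p-q|\approx 2C$. This contradicts your claim that $p$ and $q$ differ by a small amount. When $\bar x=\bar y$ it gives $p=-q\neq0$, so your degenerate case ($p=q=0$, $X=Y=O$) is lost.

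Because $F(p,X)$ depends only on $p/|p|$ and $|p|$ may tend to $0$, the argument actually needs $\frac{|p-q|}{|p|}(\|X\|+\|Y\|)\to0$ together with $X\le Y+o(1)$. The boundary defect to absorb is $O(|x-y|\,|p|)$, not $O(|x-y|)$. Any normal correction must therefore have that size, for instance $\varepsilon^{-1}|x-y|^4(d(x)+d(y))$, and its second derivatives must be controlled as well. ``Continuity of $F$ away from $p=0$'' yields none of this uniformly.

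Either carry out this construction in full, or, as the paper does, verify the structural hypotheses and invoke \cite[Theorem 3.1]{B99}.
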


\begin{remark}[Regularity of the boundary]
    Proposition~\ref{prop:comparison-principle} was shown in \cite[Theorem 2.1]{EG25} by invoking the comparison result due to Barles \cite[Theorem 3.1]{B99};
    the $C^{2,1}$ regularity of $\Omega$ was crucial to guarantee the uniform boundedness of the surface gradient $\sgrad\nu_\Omega$ of the outward normal vector field $\nu_\Omega$.
    We can find a similar comparison result by Ishii--Sato \cite[Theorem 2.1,\S 5]{IS04}, and their theorem also required the $C^{2,1}$ regularity of $\Omega$.
\end{remark}

\begin{remark}[Boundedness of the surface gradient of $\beta$]
    We note that $\Omega$ is not necessarily bounded in the hypothesis of Proposition~\ref{prop:comparison-principle}.
    However, we will invoke the assumption that $\Omega$ is bounded to claim the maximum principle of the variational problem \eqref{eq:mms}.
\end{remark}

\begin{defn}[Relaxed limit]\label{defn:relax-limit}
Let $\mathcal O$ be a metric space, typically
$\overline{\Omega}$ or $\overline{\Omega}\times[0,T]$, and let
$u^h:\mathcal O\to\mathbb R$ be parametrized by $h>0$.
The upper and lower relaxed limits of $u^h$ are defined by
\[
\begin{aligned}
\overline{u}(x)
&:= \limsup_{\substack{h\to0\\y\to x}}u^h(y)
 = \lim_{\delta\downarrow0}
   \sup\left\{
   u^h(y):\,y\in\mathcal O,\ d(x,y)<\delta,\ 0<h<\delta
   \right\},\\
\underline{u}(x)
&:= \liminf_{\substack{h\to0\\y\to x}}u^h(y)
 = \lim_{\delta\downarrow0}
   \inf\left\{
   u^h(y):\,y\in\mathcal O,\ d(x,y)<\delta,\ 0<h<\delta
   \right\}.
\end{aligned}
\]
\end{defn}

In this study, we aim to apply the convergence result of discrete schemes approximating viscosity solutions by Barles and Souganidis \cite[Theorem 2.1]{BS91} to show Theorem~\ref{thm:main}. 

\begin{proposition}\label{prop:BS}
    For each $h>0$, suppose that $S_h:BUC(\overline{\Omega})\to BUC(\overline{\Omega})$ satisfies the following conditions:\newline

    \noindent
    \textbf{Monotonicity:}\newline
    \begin{equation*}
        S_h u \le S_h v\qquad\mbox{if}\quad u \le v\quad\text{in}\quad\overline{\Omega}.
    \end{equation*}

    \noindent
    \textbf{Translation invariance:}\newline
    \begin{equation*}
        \begin{aligned}
            S_h(u + c) &= S_h u + c\qquad \forall c\in\R,\\
            S_h (0) &= 0.
        \end{aligned}
    \end{equation*}

    \noindent
    \textbf{Consistency:}
    Assume that $F:(\R^d\setminus\{0\})\times\mathbb S^{d}\to\R$ is degenerate elliptic, geometric, and continuous, and $F$ satisfies $-\infty<F_*(0,O) = F^*(0,O) < \infty$.
    For every $\varphi\in C^2_F(\Omega)\cap C^2(\overline{\Omega})$ and $z\in\overline{\Omega}$ satisfying either
    \begin{itemize}
        \item $z\in\Omega$, or
        \item $z\in\partial\Omega$ and $\left<\nabla\varphi(z),\nu_\Omega(z)\right> + \beta(z)|\nabla\varphi(z)| > 0\, (resp., < 0)$,
    \end{itemize}
    it holds that
    \begin{equation}
        \label{eq:consistency}
        \begin{aligned}
        {\limsup_{h\to 0}}^*\frac{S_h\varphi(z) - \varphi(z)}{h}  &\le - F_*(\nabla\varphi(z), \nabla^2\varphi(z))\\
        ((resp.,\, {\liminf_{h\to 0}}_*\frac{S_h\varphi(z) - \varphi(z)}{h}  &\ge - F^*(\nabla\varphi(z), \nabla^2\varphi(z))).
        \end{aligned}
    \end{equation}
    Finally, assume that the limit problem \eqref{eq:level-set-FB} satisfies the comparison principle.
    Then, for any $u_0\in BUC(\overline{\Omega})$, the function $u^h$ defined by \eqref{eq:approx-sol} converges locally uniformly to the unique viscosity solution to \eqref{eq:level-set-FB}.

\end{proposition}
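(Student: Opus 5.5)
This is the Barles--Souganidis convergence theorem for monotone, translation-invariant, consistent schemes applied with a strong comparison principle, here including the Neumann-type boundary condition in the weak viscosity sense. The plan is the standard half-relaxed-limits argument.

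\emph{Step 1: boundedness.} By monotonicity and translation invariance, $u_0-\|u_0\|_\infty\le u_0\le u_0+\|u_0\|_\infty$ gives $S_h^n u_0$ bounded by $\|u_0\|_\infty$ for all $n$, using $S_h(0)=0$. Hence $u^h$ is uniformly bounded on $\overline\Omega\times[0,T]$. The half-relaxed limits $\overline u=\relaxlimitSup u^h$ and $\underline u=\relaxlimitInf u^h$ (Definition~\ref{defn:relax-limit}) are then finite, upper and lower semicontinuous respectively, and satisfy $\underline u\le\overline u$.

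\emph{Step 2: $\overline u$ is a viscosity subsolution, and symmetrically $\underline u$ is a supersolution.} Take $\varphi\in C^2_F$ and suppose $\overline u-\varphi$ has a strict local maximum at $(\hat x,\hat t)$, with $\hat t>0$; the case $\hat t=0$ is handled by Step 3. Extract $h_k\to0$ and points $(x_k,t_k)\to(\hat x,\hat t)$ that maximize $u^{h_k}-\varphi$ up to an error $h_k^2$, with $u^{h_k}(x_k,t_k)\to\overline u(\hat x,\hat t)$. Set $\xi_k=u^{h_k}(x_k,t_k)-\varphi(x_k,t_k)$. Then $u^{h_k}(\cdot,t_k-h_k)\le\varphi(\cdot,t_k-h_k)+\xi_k+h_k^2$ locally. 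Because the scheme is nonlocal, I first localize by modifying $\varphi$ far away, adding a large multiple of $|x-\hat x|^4$, so that the inequality holds globally. Monotonicity and translation invariance then give
\[
\varphi(x_k,t_k)\le S_{h_k}\varphi(\cdot,t_k-h_k)(x_k)+h_k^2 .
\]
Divide by $h_k$ and write $\varphi(x_k,t_k)-\varphi(x_k,t_k-h_k)=h_k\varphi_t+o(h_k)$. Then apply the consistency condition at $\hat x$, in its relaxed form where the limsup is over $z\to\hat x$, $h\to0$ and the test function $\varphi(\cdot,t)$ varies with $t\to\hat t$. This yields $\varphi_t+F_*\le0$ when $\nabla\varphi\neq0$. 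At a boundary point where $B(\hat x,\nabla\varphi)>0$, consistency still applies and gives the interior inequality, which is exactly the disjunction in Definition~\ref{dfn:viscosity-sol}. If $B\le0$, the boundary alternative already holds and nothing is needed. When $\nabla\varphi(\hat x,\hat t)=0$, compatibility ($\nabla^2\varphi=O$, and $F_*(0,O)=F^*(0,O)=0$) reduces consistency to $\varphi_t\le0$. Here I would use the relaxed consistency together with the $f\in\mathcal F$ bound: replace $\varphi$ by $\varphi(\hat x,\cdot)+f(|x-\hat x|)$ plus small perturbations, for which $S_h$ moves the function by $o(h)$ locally.

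\emph{Step 3: initial data.} I need $\overline u(x,0)\le u_0(x)\le\underline u(x,0)$. For each $\varepsilon>0$, build smooth barriers $u_0(y)+\varepsilon+L_\varepsilon|x-y|^2$, or rather $\psi_\varepsilon\ge u_0$ with $\psi_\varepsilon(y)\le u_0(y)+\varepsilon$, and use consistency to show $S_h\psi\le\psi+Ch$. This needs a boundary-adapted barrier, i.e.\ a quadratic modified near $\partial\Omega$ so that $B>0$, exploiting $\|\beta\|_\infty<1$ and the smoothness of $\Omega$. Iterating gives $u^h(\cdot,t)\le\psi_\varepsilon+Ct$, and letting $t\to0$ and then $\varepsilon\to0$ gives the bound. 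The lower bound is symmetric.

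\emph{Step 4: conclusion.} The comparison principle (Proposition~\ref{prop:comparison-principle}) gives $\overline u\le\underline u$, hence equality. The common value $u$ is continuous and is the viscosity solution. Equality of the half-relaxed limits is equivalent to local uniform convergence of $u^h$ to $u$.

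The main obstacle is Step 3 together with the boundary part of Step 2. One must produce barriers compatible with the oblique condition and make sure consistency is used only where the boundary inequality has the right strict sign. Degenerate gradient points need the $\mathcal F$-class test functions.
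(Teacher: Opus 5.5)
Your proposal is correct and follows essentially the same route as the paper, which takes this result from Barles--Souganidis \cite{BS91} and replays the same half-relaxed-limit argument in the proof of Theorem~\ref{thm:main}: uniform bounds, then the sub/supersolution property of $\overline{u}$ and $\underline{u}$ from monotonicity, translation invariance and consistency, then comparison. Your Step~3 and the $h_k^2$-near-maximizers supply details the paper passes over (it asserts $\overline{u}(\cdot,0)=u_0=\underline{u}(\cdot,0)$ ``by the definition''), and in Step~2 you can avoid assuming consistency uniformly in the time slice by dominating $\varphi$ near $(\hat x,\hat t)$ with the separated function $\varphi(x,\hat t)+|x-\hat x|^4+\varphi_t(\hat x,\hat t)(t-\hat t)+C|t-\hat t|^{4/3}$ (Young's inequality on the cross term), so that translation invariance reduces everything to the fixed test function $\varphi(\cdot,\hat t)+|\cdot-\hat x|^4$, just as you already do in the degenerate-gradient case.
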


\begin{remark}
    We note that the original criterion of the consistency condition in \cite[Theorem 2.1]{BS91} did not include the case for $z\in\partial\Omega$.
    We do not have to check \eqref{eq:consistency} if $\left<\nabla\varphi(z),\nu_\Omega(z)\right> + \beta(z)|\nabla\varphi(z)| \le 0\, (resp.,\, \ge 0)$
    thanks to the definition of viscosity solutions in Definition~\ref{dfn:viscosity-sol}.
\end{remark}

\begin{remark}
    We mention that checking the consistency condition for compatible test functions with $F$ is sufficient, which follows by examining the proof of the convergence result by Barles--Souganidis \cite[Theorem 2.1]{BS91}.
    Thus, thanks to the relation \eqref{eq:compatible-set}, the Hessian of test functions can be supposed to be zero whenever its gradient vanishes.
\end{remark}

\subsection{Set operator and function operator}




In this section, we give a general framework which connects a set operator like $T_h$ introduced in \eqref{eq:def-Th} to a function operator via a level-set interpretation.
First, we call a map $T:\mathcal P(\closure{\Omega})\to\mathcal P(\closure{\Omega})$ a \textit{set operator} on $\closure{\Omega}$. We follow the end point convention that $T(\emptyset)=\emptyset$ and $T(\overline{\Omega})=\overline{\Omega}$.

\begin{defn}[Monotonicity of set operators]
    A set operator $T$ on $\closure{\Omega}$ is said to be monotone provided that
    $$T(E)\subset T(F)\qquad\mbox{if}\quad E\subset F\subset\closure{\Omega}.$$
\end{defn}

\begin{defn}[Continuity of set operators]
    \label{defn:Th-continuous}
    A set operator $T$ on $\closure{\Omega}$ is said to be continuous
    provided that for any non-increasing sequence $\{E_i\}_i$ of closed sets in $\closure{\Omega}$, it holds that
    \begin{equation}
        \label{eq:continuity-T}
        \bigcap_{i = 1}^\infty T(E_i) = T\left(\bigcap_{i=1}^\infty E_i\right).
    \end{equation}
\end{defn}

\begin{defn}[A function operator associated to a set operator]
    Let $T$ be a set operator on $\closure{\Omega}$. Then, the function operator $S$ associated to $T$ is defined by
    \begin{equation}
        \label{eq:defn-S}
        S u (x) := \sup\left\{\lambda\in\R\biggm| x\in T(u\geq \lambda)\right\}\qquad\mbox{for}\quad x\in\closure{\Omega},\, u\in C(\closure{\Omega}).
    \end{equation}
\end{defn}

\begin{proposition}
    \label{prop:relation-TS}
    Assume that a set operator $T$ is continuous, and let $S$ be the function operator associated to $T$.
    Then, it holds that
    \begin{equation*}
        \left\{Su \geq \lambda\right\} = T(\left\{u\geq \lambda\right\})\qquad \forall\lambda\in\R,\,\forall u\in C(\closure{\Omega}).
    \end{equation*}
\end{proposition}
\begin{proof}
    If $x\in T(\{u\geq \lambda\})$, then we deduce from the definition of $S$ \eqref{eq:defn-S} that $Su(x) \geq \lambda$, and hence $\{Su\geq \lambda\}\supset T(\{u\geq\lambda\})$.
    We show the opposite inclusion. Assume that $Su(x)\geq\lambda$.
    We can take a non-decreasing sequence $\lambda_i\uparrow Su(x)$ for which $x\in T(E_i)$ for every $i\in\mathbb N$ using the notation that $E_i := \{u\geq \lambda_i\}$.
    Since $\{E_i\}_i$ is non-increasing, we can invoke the continuity of $T$ \eqref{eq:continuity-T} and obtain
    \[
    x\in\bigcap_{i=1}^\infty T(E_i) = T\left(\bigcap_{i=1}^\infty E_i\right) = T(\{u\geq \lambda\}).
    \]
    This means $\{Su\leq \lambda\}\subset T(\{u\geq \lambda\})$ which concludes the proof.
\end{proof}

\begin{proposition}
    \label{prop:continuity-T}
    Suppose that a map $\mathcal{P}(\closure{\Omega})\ni E\mapsto w_E\in C(\closure{\Omega})$ satisfies the following conditions:\newline
    
    \noindent
    \textbf{Monotonicity:} For any $E\subset F\subset\closure{\Omega}$, $$w_E \geq w_F.$$

    \noindent
    \textbf{Uniform boundedness:} $$\sup_{E\subset\closure{\Omega}}\|w_E\|_{C^0(E)} < \infty.$$\newline

    \noindent
    \textbf{Equi-continuity:} The Lipschitz constant of $w_E$ does not depend on $E$, namely we have
    $$\sup_{E\subset\closure{\Omega}}\operatorname{Lip}(w_E) < \infty,$$
    where
    $$\operatorname{Lip}(f) := \sup_{x\neq y}\frac{|f(x) - f(y)|}{|x - y|}\qquad \mbox{for}\quad f:\closure{\Omega}\to \R.$$
    Assume that the set operator $T$ on $\closure{\Omega}$ is defined by $T(E) := \{x\in\closure{\Omega}\mid w_E(x)\leq 0\}$.
    Then, $T$ is continuous.
\end{proposition}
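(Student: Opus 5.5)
The plan is to prove the two inclusions in \eqref{eq:continuity-T} separately. Let $\{E_i\}_i$ be a non-increasing sequence of closed subsets of $\closure{\Omega}$ and put $E:=\bigcap_{i}E_i$, which is closed.

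\emph{The inclusion $T(E)\subset\bigcap_i T(E_i)$} follows from monotonicity alone. Since $E\subset E_i$, the monotonicity hypothesis gives $w_E\geq w_{E_i}$ on $\closure{\Omega}$. Hence if $w_E(x)\leq 0$, then $w_{E_i}(x)\leq 0$ for every $i$, that is, $x\in T(E_i)$ for every $i$.

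\emph{The inclusion $\bigcap_i T(E_i)\subset T(E)$} is the substantive one, and I would argue by compactness.
\begin{enumerate}
\item Since $E_{i+1}\subset E_i$, monotonicity makes $\{w_{E_i}\}_i$ pointwise non-decreasing in $i$, and it is bounded above by $w_E$.
\item Together with the uniform bound $\sup_E\|w_E\|$ and the equi-Lipschitz bound $\sup_E\operatorname{Lip}(w_E)<\infty$, the Arzel\`a--Ascoli theorem (or Dini's theorem, since the sequence is monotone) shows that $w_{E_i}$ converges uniformly on $\closure{\Omega}$ to some $w$. This limit is Lipschitz with the same constant and satisfies $w\leq w_E$.
\item If $x\in\bigcap_iT(E_i)$, then $w_{E_i}(x)\leq0$ for all $i$, so $w(x)\leq 0$.
\end{enumerate}
It therefore suffices to show $w_E\leq w$, i.e.\ $w=w_E$; then $w_E(x)\leq 0$ and $x\in T(E)$.

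\emph{Identifying the limit, $w=w_E$, is the main obstacle}, and I expect it to carry the whole difficulty. The three abstract hypotheses only bound $w$ from above by $w_E$ and do not by themselves force equality, so this step needs the structure of the map $E\mapsto w_E$ as it is used in the paper, namely $w_E$ is the minimizer of $J_{\beta^h}$ with datum $d_{\Omega,E}$. I would proceed as follows.
\begin{enumerate}
\item Show that $d_{\Omega,E_i}\to d_{\Omega,E}$ pointwise, hence uniformly by the uniform Lipschitz bound of Proposition~\ref{prop:geodesic-uniform-bounded}. This uses that a decreasing sequence of compact sets converges in the Hausdorff sense to its intersection.
\item Pass to the limit in the minimality of $w_{E_i}$. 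The lower semicontinuity of the capillary functional gives
\[
J(w)\leq\liminf_i J_i(w_{E_i})\leq\liminf_i J_i(v)=J(v)
\]
for every competitor $v$, where $J_i$ denotes the energy with datum $d_{\Omega,E_i}$ and $J$ the energy with datum $d_{\Omega,E}$. Thus $w$ is a minimizer for the datum $d_{\Omega,E}$.
\item By strict convexity the minimizer is unique, so $w=w_E$.
\end{enumerate}
The uniform convergence supplied by the equi-Lipschitz hypothesis is what makes the passage to the limit in the trace term and in the fidelity term legitimate. This closes the reverse inclusion.
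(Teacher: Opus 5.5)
Your proof is correct and follows the same route as the paper. Monotonicity gives $T(E)\subset\bigcap_i T(E_i)$, and Arzel\`a--Ascoli applied to $\{w_{E_i}\}_i$ gives the reverse inclusion. The identification of the limit as $w_E$ (via $d_{\Omega,E_i}\to d_{\Omega,E}$ uniformly, lower semicontinuity, and uniqueness of the minimizer) is exactly the step the paper does not spell out; it delegates this to \cite[Lemma 3.6]{EG25}, which uses the concrete variational structure of $T_h$.

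You are also right that the three abstract hypotheses cannot suffice on their own. Set $w_E\equiv-1$ for infinite $E$ and $w_E\equiv1$ for finite $E$; this family is monotone, bounded and equi-Lipschitz. Yet for $x_0\in\closure{\Omega}$ and $E_i:=\closure{\Omega}\cap\closure{B_{1/i}(x_0)}$ one gets $\bigcap_i T(E_i)=\closure{\Omega}$ while $T(\{x_0\})=\emptyset$. So the proposition as stated genuinely needs the extra structure you invoke.
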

\begin{proof}
    Since $E\mapsto w_E$ is monotone, $T$ is obviously monotone, and therefore the left-hand side of the formula \eqref{eq:continuity-T} is larger than the right-hand side.
    The opposite inclusion can be shown by extracting a subsequence of $\left\{E_i\right\}_i$ such that $w_{E_i}$ locally uniformly converges to a function from
    the Arzerl\'a--Ascoli theorem (see \cite[Lemma 3.6]{EG25} for a similar discussion, which directly applied to the operator $T_h$ defined in \eqref{eq:def-Th}).
\end{proof}

We now revisit the capillary Chambolle-type scheme which defines the map $E\mapsto w_E^h$ (so the set operator $T_h$ is also monotone for each $h>0$).
We see that this map is monotone due to \cite[Lemma 3.1]{EG25} and is uniformly bounded from this monotonicity together with the boundedness of the domain $\Omega$.
To apply Proposition~\ref{prop:continuity-T}, it remains to show a Lipschitz bound estimate of $w_E$ which is independent of the choice of $E$.
In the strategy which was presented by the first author and Giga \cite{EG25}, they assumed the convexity of $\Omega$
and a point-wise gradient bound of the contact angle function $\beta$ by the minimal principal curvature of the boundary of 
the container $\pOmega$, as well as its smoothness (see \cite[Theorem 3.1]{EG25}).
As explained in Introduction, we only assume that $\beta$ is Lipschitz continuous and approximate it by a sequence $\{\beta^h\}_{h>0}$ of smooth functions on $\pOmega$ parametrized by the time step $h>0$. Then, we obtain a Lipschitz bound of the gradient of the approximate solution whose Lipschitz constant depends on a fixed $h>0$.

\section{Lipschitz bound estimate for the approximate solutions}
\label{sec:lipschitz-bound}
In this section, we aim to show a Lipschitz bound for the solution $w$ of the minimization problem
\begin{equation}\label{eq:minimization-with-g}
w := \argmin_{u\in L^2(\Omega)\cap BV(\Omega)} \left\{\int_\Omega |\nabla u| + \int_{\partial\Omega}\beta^h\gamma u\,\mathrm{d}\mathcal{H}^{d-1} + \frac{1}{2h}\int_\Omega(u - g)^2\,\mathrm{d}\mathcal{L}^d\right\}.
\end{equation}
Here, $\beta^h$ is a smooth function on $\partial\Omega$ that satisfies $\|\beta^h\|_{C^0(\partial\Omega)} \leq \|\beta\|_{C^0(\partial\Omega)}$ and uniformly converges to $-\beta$ on $\pOmega$ as $h\to 0$, and $g$ is a given Lipschitz continuous function on $\Omega$. As mentioned right after the statement of Theorem \ref{thm:main}, we work with the function $\beta^h$ instead of $\beta$. From a technical viewpoint, this is because we will need the classical differentiability of the function in order to implement the Bernstein method. Although the whole estimates shown in this section (see Proposition \ref{prop:a-priori-estimate}) become sensitive in $h\in(0,1)$, no issues arise when proving the continuity of the operator $T_h$ (see Lemma \ref{lem:Th_continuity}). We also stress that establishing the continuity of the operator is the main reason why we obtain gradient estimates (Proposition \ref{prop:a-priori-estimate}) by developing technical computations in this section, where all the novelties of the paper lie.

In view of \cite{EG24}, we can interpret the Euler--Lagrange equation as the Neumann boundary problem:
\begin{align}\label{eq:E-L}
\begin{cases}
w - h \operatorname{div} \nabla\phi(\nabla w)= g &\qquad \text{in}\quad \Omega, \\
\nu_{\Omega}\cdot\nabla w + \beta^h|\nabla w| = 0 &\qquad \text{on}\quad \partial \Omega.
\end{cases}
\end{align}
\noindent
However, the equation \eqref{eq:E-L} is categorized as a \textit{very singular diffusion equation} since its divergence term becomes unbounded at $|\nabla w| = 0$,
and it is involved to define its viscosity solution (see e.g.,  \cite{GGP14}).
Thus, instead of solving \eqref{eq:E-L} directly, we consider approximate energies, corresponding minimizers, and Euler-Lagrange equations as arranged below:

\begin{proposition}\label{prop:gamma-convergence}
For a smooth approximation $\{g^{\varepsilon}\}_{\varepsilon\in(0,1)}$ of $g$ satisfying $\|\nabla g^{\varepsilon}\|_{L^{\infty}(\Omega)} \leq \|\nabla g\|_{L^{\infty}(\Omega)}$, define 
\[
J_{\varepsilon,\beta^h}(u) := C_{\varepsilon,\beta^h}(u) + \frac{1}{2h}\int_\Omega(u-g^\varepsilon)^2 \,\mathrm{d}\mathcal L^d
\]
with
\[
C_{\varepsilon,\beta^h}(u) := \int_\Omega\sqrt{\varepsilon^2 + |\nabla u|^2}\dL + \int_{\pOmega}\beta^h\gamma u\,\mathrm{d}\mathcal H^{d-1}.
\]
Then, the following statements hold:
\begin{itemize}
    \item[(i)] There exists a unique minimizer $w^{\varepsilon}\in L^2(\Omega)$ of $J_{\varepsilon,\beta^h}$. Furthermore, $w^{\varepsilon}$ is smooth and solves the equation
    \begin{align}\label{eq:E-L-approximate}
    \begin{cases}
    w^{\varepsilon} - h \operatorname{div} \nabla\phi^{\varepsilon}(\nabla w^{\varepsilon}) = g^{\varepsilon} &\qquad \text{in}\quad \Omega, \\
    B^{\varepsilon}(\cdot, \nabla w^{\varepsilon}) = 0 &\qquad \text{on}\quad \partial \Omega,
    \end{cases}
    \end{align}
    where
    \begin{equation*}
    \phi^{\varepsilon}(p) := \sqrt{\varepsilon^2+|p|^2}\qquad
    \mbox{and}\qquad
    B^{\varepsilon}(x, p) := \nu_{\Omega}(x) \cdot p + \beta^h(x) \phi^{\varepsilon}(p).
    \end{equation*}

    \item[(ii)] If the minimizer $w^\varepsilon$ of the energy $J_{\varepsilon,\beta^h}$ converges to $w$ in $L^2(\Omega)$, then $w$ solves the minimization problem \eqref{eq:minimization-with-g}.
\end{itemize}
\end{proposition}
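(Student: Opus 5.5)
For part (i), I would first use the direct method to get existence and uniqueness. The integrand $\sqrt{\varepsilon^2+|p|^2}$ is strictly convex with linear growth. Because $\|\beta^h\|_{C^0(\pOmega)}\le\|\beta\|_{C^0(\pOmega)}<1$, the capillary part is coercive and lower semicontinuous. This follows from the trace inequality $\int_{\pOmega}|\gamma u|\,\mathrm{d}\mathcal H^{d-1}\le \int_\Omega|\nabla u|+C\int_\Omega|u|$ on a smooth bounded domain, where the constant in front of the total variation can be taken arbitrarily close to $1$, combined with the lower semicontinuity argument of \cite[Proposition 4]{EG24}, which adapts to the relaxed functional $\int_\Omega\sqrt{\varepsilon^2+|Du|^2}$. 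The fidelity term $\frac1{2h}\int_\Omega(u-g^\varepsilon)^2$ is strictly convex and coercive in $L^2$. Hence a minimizing sequence is bounded in $BV\cap L^2$, has a subsequence converging in $L^1$ and weakly in $L^2$, and the limit is a minimizer. Strict convexity of the $L^2$ term gives uniqueness.

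Next, the Euler--Lagrange system \eqref{eq:E-L-approximate} follows formally from the first variation $\frac{d}{dt}J_{\varepsilon,\beta^h}(w^\varepsilon+t\eta)|_{t=0}=0$ for $\eta\in C^\infty(\closure\Omega)$. Integrating by parts gives the interior equation and the natural boundary condition $\nu_\Omega\cdot\nabla\phi^\varepsilon(\nabla w^\varepsilon)+\beta^h=0$. Multiplying this condition by $\phi^\varepsilon(\nabla w^\varepsilon)$ turns it into $B^\varepsilon(\cdot,\nabla w^\varepsilon)=0$.

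The main obstacle is regularity, since $BV$ minimizers need not be smooth a priori. My plan is to use the theory of capillary-type functionals: Giusti and Gerhardt's results on the prescribed mean curvature equation with capillary boundary condition $|\beta^h|<1$, or equivalently Lieberman's theory for quasilinear oblique problems. These give that the minimizer is locally Lipschitz and then $C^{1,\alpha}$ up to the boundary. With that in hand, the boundary condition $\nu_\Omega\cdot\nabla\phi^\varepsilon(p)+\beta^h$ is uniformly oblique, because its $p$-derivative paired with $\nu_\Omega$ is $\varepsilon^2/\phi^\varepsilon(p)^3>0$. The operator $\operatorname{div}\nabla\phi^\varepsilon$ is uniformly elliptic on bounded gradients. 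Schauder bootstrapping then yields smoothness, given smooth $g^\varepsilon$, $\beta^h$ and $\pOmega$.

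An alternative route, if that literature must be avoided, is to solve \eqref{eq:E-L-approximate} classically by the continuity method or Leray--Schauder. This needs a priori bounds: a sup bound via the maximum principle, since $h\operatorname{div}$ is elliptic and the oblique condition excludes boundary extrema; a gradient bound for fixed $\varepsilon$ and $h$, which is the delicate step; and Hölder gradient estimates. The classical solution is the minimizer by convexity. In either route, the gradient bound up to the boundary is where I expect the difficulty, and I would rely on the known capillary estimates rather than reprove them here.

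For part (ii), I would use a $\Gamma$-convergence style argument. For any $u\in BV\cap L^2$ we have $J_{\varepsilon,\beta^h}(u)\to J_{\beta^h}(u)$, using $|\nabla u|\le\sqrt{\varepsilon^2+|\nabla u|^2}\le|\nabla u|+\varepsilon$ (with $\mathcal L^d(\Omega)<\infty$) and $g^\varepsilon\to g$ in $L^2$. For the liminf side, suppose $w^\varepsilon\to w$ in $L^2$. Then
\[
J_{\beta^h}(w)\le\liminf_{\varepsilon\to0}\Big(C_{\beta^h}(w^\varepsilon)+\tfrac1{2h}\int_\Omega(w^\varepsilon-g^\varepsilon)^2\Big)\le\liminf_{\varepsilon\to0} J_{\varepsilon,\beta^h}(w^\varepsilon),
\]
by the lower semicontinuity of $C_{\beta^h}$ from \cite[Proposition 4]{EG24} together with $\phi^\varepsilon\ge|\cdot|$. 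Minimality gives $J_{\varepsilon,\beta^h}(w^\varepsilon)\le J_{\varepsilon,\beta^h}(u)\to J_{\beta^h}(u)$. Therefore $J_{\beta^h}(w)\le J_{\beta^h}(u)$ for all $u$, so $w$ is the minimizer in \eqref{eq:minimization-with-g}.
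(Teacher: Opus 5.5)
The paper omits this proof as standard. It indicates only the first variation for \eqref{eq:E-L-approximate}, the a priori gradient estimate of Proposition~\ref{prop:a-priori-estimate} for smoothness, and $\Gamma$-convergence for (ii). Your existence and uniqueness by the direct method, your derivation of the boundary condition, and your part (ii) match this, and (ii) is correct as written.

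The real difference is where regularity comes from. You import it from classical capillarity theory, which is legitimate. With $v:=w^\varepsilon/\varepsilon$ the system becomes $\operatorname{div}\bigl(Dv/\sqrt{1+|Dv|^2}\bigr)=\frac{\varepsilon}{h}v-\frac{1}{h}g^\varepsilon$ in $\Omega$ and $\nu_\Omega\cdot Dv/\sqrt{1+|Dv|^2}=-\beta^h$ on $\pOmega$. This is the capillary problem with positive gravity coefficient $\varepsilon/h$ and $|\beta^h|\le 1-2\kappa$, for which the Ural'tseva/Gerhardt/Korevaar/Lieberman theory gives a solution in $C^{2,\alpha}(\closure{\Omega})$, then smooth by bootstrapping. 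The constants depend on $\varepsilon$ and $h$, which is harmless at fixed $\varepsilon,h$.

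The paper instead has in mind your second route. It would obtain a classical solution by a continuity or Leray--Schauder argument driven by its own estimate, Proposition~\ref{prop:a-priori-estimate}, and identify it with the minimizer by convexity. That route is self-contained for the gradient bound, which is the paper's main contribution and is uniform in $\varepsilon$. It still needs an $L^\infty$ bound, H\"older gradient bounds, and an existence theorem for nonlinear oblique problems. Your first route delegates all of this to the literature.

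One step of your alternative route fails as stated, although your main route does not use it. The boundary condition does not exclude boundary extrema, because $B^\varepsilon(x,0)=\beta^h(x)\varepsilon\neq 0$. At a boundary maximum $x_0$ with $\beta^h(x_0)<0$, take $\nabla w^\varepsilon(x_0)=\lambda\nu_\Omega(x_0)$ with $\lambda=|\beta^h(x_0)|\varepsilon/\sqrt{1-\beta^h(x_0)^2}>0$; this is compatible with $B^\varepsilon=0$.

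A concrete example: let $\Omega=(0,1)$, $g^\varepsilon\equiv 0$ and $\beta^h\equiv -b$ with $b\in(0,1)$. Integrating the equation gives $\int_0^1 w^\varepsilon=2hb>0$. An interior maximum would force $w^\varepsilon\le 0$ there, so the maximum lies on the boundary and $\sup w^\varepsilon\le\sup g^\varepsilon$ fails.

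The fix is to integrate the equation in general, which gives $\int_\Omega w^\varepsilon=\int_\Omega g^\varepsilon-h\int_{\pOmega}\beta^h$. Combine this with the gradient bound, whose constant in Proposition~\ref{prop:a-priori-estimate} does not involve $\sup|w^\varepsilon|$, and with the connectedness of $\Omega$ to bound the oscillation.

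A minor correction: $\nu_\Omega\cdot\partial_p\bigl(\nu_\Omega\cdot\nabla\phi^\varepsilon(p)\bigr)=\bigl(\varepsilon^2+|p|^2-(p\cdot\nu_\Omega)^2\bigr)/\phi^\varepsilon(p)^3$. This is $\ge\varepsilon^2/\phi^\varepsilon(p)^3$, not equal to it. Working with $B^\varepsilon$ directly is cleaner, since $\nu_\Omega\cdot\partial_pB^\varepsilon\ge 1-|\beta^h|\ge 2\kappa$ uniformly in $p$.
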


We mention that the equation \eqref{eq:E-L-approximate} is from the first variation of $J_{\varepsilon,\beta^h}$ being zero and that the smoothness of $w^{\varepsilon}$ (as stated in Proposition \ref{prop:gamma-convergence}(i)) is a consequence of a priori gradient estimates (Proposition \ref{prop:a-priori-estimate}). We also mention that \ref{prop:gamma-convergence}(ii) is a consequence of the $\Gamma$-convergence of the energy $J_{\varepsilon,\beta^h}$ to $J_{\beta^h}$ with respect to the $L^2(\Omega)$-topology as $\varepsilon\to 0$. We omit the proof of Proposition \ref{prop:gamma-convergence} as it is standard.

By proving the uniform boundedness and the equi-continuity of $w^\varepsilon$ with respect to $\varepsilon\in(0,1)$, as stated in Proposition \ref{prop:a-priori-estimate}, we will obtain the convergence of $w^{\varepsilon}$ in $C^0(\overline{\Omega})$ as $\varepsilon\to0$ (up to a subsequence). Then, Proposition \ref{prop:gamma-convergence}(ii) is applied so that we obtain the solution $w$ to the minimization problem \eqref{eq:minimization-with-g}.


\medskip

Write the condition $\|\beta\|_{C^0(\partial\Omega)}<1$ as, for some $\kappa\in\left(0,\frac12\right]$,
\begin{equation}\label{assumption:kappa}
\|\beta\|_{C^0(\partial\Omega)} \leq 1-2\kappa\quad\text{so that}\quad\|\beta^h\|_{C^0(\partial\Omega)} \leq 1-2\kappa\quad\text{for all }h\in(0,1).
\end{equation}
As the main result of this section, we have the following uniform estimate:


\begin{proposition}\label{prop:a-priori-estimate}
Let $w^{\varepsilon}$ be a smooth solution to the equation \eqref{eq:E-L-approximate}. Then, there exists a constant $C=C(\Omega,\kappa,\|\beta^h\|_{C^2(\partial\Omega)},\|g\|_{W^{1,\infty}(\Omega)})>0$ such that for any $\varepsilon,h\in(0,1)$, it holds
\begin{align*}
\|\nabla w^\varepsilon\|_{L^\infty(\Omega)} \le C.
\end{align*}
In particular, there exists a subsequence $\varepsilon\to0$, still denoted by $\varepsilon\to0$, along which $w^{\varepsilon}$ converges uniformly on $\overline{\Omega}$ to a function $w$ satisfying
\begin{align*}
\|\nabla w\|_{L^\infty(\Omega)} \le C.
\end{align*}
\end{proposition}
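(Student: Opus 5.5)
We argue by a Bernstein-type maximum principle applied not to $\phi:=\frac12|\nabla w^\varepsilon|^2$ (or $\phi^\varepsilon(\nabla w^\varepsilon)$) itself, but to a corrected gradient function
\[
z:=\rho(x)\bigl(\phi+\psi\bigr),\qquad \phi:=\phi^\varepsilon(\nabla w^\varepsilon)=\sqrt{\varepsilon^2+|\nabla w^\varepsilon|^2},
\]
where $\psi$ is an additive corrector built from the boundary datum and $\rho$ is a positive multiplicative weight. For the corrector we extend $\beta^h$ smoothly into a collar of $\pOmega$ (together with $\nu_\Omega=\nabla d$ for the signed distance $d$ to $\pOmega$), and take $\psi:=\beta^h(x)\,\nu_\Omega(x)\cdot\nabla w^\varepsilon$, cut off away from $\pOmega$, possibly with a small constant factor. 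The idea is that $\phi+\psi$ is comparable to $\phi$: since $|\beta^h|\le1-2\kappa$ we get $\kappa\phi\le\phi+\psi\le2\phi$. For the weight we take $\rho:=e^{-Md}$ with $d\ge0$ the distance to $\pOmega$ and $M$ large, depending on $\Omega$, $\kappa$ and $\|\beta^h\|_{C^2}$.

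\textbf{Step 1 (no boundary maximum).} Suppose $z$ attains its maximum at $x_0\in\pOmega$ with $|\nabla w^\varepsilon(x_0)|$ large. Then $\partial_{\nu_\Omega}z(x_0)\ge0$. We compute $\partial_\nu z$ as follows. Differentiate the boundary condition $B^\varepsilon(\cdot,\nabla w^\varepsilon)=0$ tangentially; this expresses the tangential derivatives of $\nu\cdot\nabla w^\varepsilon$ through the second fundamental form of $\pOmega$ and $\sgrad\beta^h$. Use the PDE to eliminate $\partial^2_{\nu\nu}w^\varepsilon$. The obstruction terms are of size $C(|\mathrm{II}|+|\sgrad\beta^h|)\phi$. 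In Eto--Giga these had to be signed by \eqref{assumption:convexity}; here we absorb them with $\partial_\nu\rho/\rho=M$ and the corrector. This gives $\partial_\nu z(x_0)\le(C-cM\kappa)\phi<0$ for $M$ large. The choice of $\psi$ should cancel the terms in $\partial_\nu\phi$ involving $\partial_\nu(\nu\cdot\nabla w)$, which are not controlled by the tangential data. That is the reason for the corrector, and it dictates its exact form. This contradiction rules out boundary maxima.

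\textbf{Step 2 (interior maximum).} Linearize the equation. Set $a^{ij}(p)=\partial_{p_ip_j}\phi^\varepsilon(p)$ and $L v:=h\,a^{ij}(\nabla w^\varepsilon)\partial_{ij}v$. Differentiating $w^\varepsilon-h\,\partial_i(\partial_{p_i}\phi^\varepsilon(\nabla w^\varepsilon))=g^\varepsilon$ in $x_k$ and contracting with $\partial_{p_k}\phi^\varepsilon$ gives
\[
\frac{|\nabla w|^2}{\phi}-L\phi+(\text{nonnegative term})=\frac{\nabla g^\varepsilon\cdot\nabla w}{\phi}.
\]
Hence $|\nabla w|^2\le \phi\,\|\nabla g\|_\infty+L\phi\cdot\phi$ schematically. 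At an interior maximum of $z$ we have $\nabla z=0$ and $Lz\le0$. Expanding $L(\rho(\phi+\psi))$, the terms from $\rho$ and from $\psi$ involve $a^{ij}$ times first derivatives of $\nabla w$ and zeroth-order terms. The key point is the degeneracy: $a^{ij}$ vanishes in the direction $\nabla w$ and is of size $1/\phi$ transversally. The first-derivative terms are therefore bounded using $\nabla z=0$ (to replace $\nabla\phi$ by $-\phi\nabla\rho/\rho-\nabla\psi$) and Cauchy--Schwarz against the good nonnegative term $h\,a^{ij}a^{kl}\partial_{ik}w\partial_{jl}w$. The $h$-factors are harmless since $h<1$. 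This yields $\phi(x_0)\le C(1+\|\nabla g\|_\infty)$, hence $\sup z\le C$ and $\|\nabla w^\varepsilon\|_\infty\le C/(\kappa\inf\rho)$.

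\textbf{Step 3 (compactness).} The bound $\|w^\varepsilon\|_\infty\le C$ follows from the comparison/maximum principle for \eqref{eq:E-L-approximate}, or directly from the energy. Arzel\`a--Ascoli then gives uniform convergence of a subsequence, and the Lipschitz bound passes to the limit. I expect Step 1 to be the main obstacle: the corrector $\psi$ and the exponent $M$ must be tuned so that the uncontrolled normal second derivatives cancel exactly. The additional terms generated by $\psi$ must also remain controllable in Step 2.
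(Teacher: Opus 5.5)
Your plan follows the paper's route: the corrected gradient $\rho\,(\phi+\beta^h\nu_\Omega\cdot\nabla w^\varepsilon)$ (the paper writes the corrector as $\beta^h D\eta\cdot Dw^\varepsilon$ with $D\eta=\nu_\Omega$ on $\pOmega$ and $|D\eta|\le1$), a corrector that cancels the $w^\varepsilon_{\nu\nu}$ terms on $\pOmega$, a weight that forbids boundary maxima, and an interior completion of squares against $-h|\Phi^{1/2}D^2w^\varepsilon\,\Phi^{1/2}|^2$ with $\Phi=(a^{ij})$.

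However, Step 1 fails as written because your weight has the wrong sign. With $d\ge0$ the distance to $\pOmega$ and $\nu_\Omega$ outward, $\nabla d=-\nu_\Omega$ on $\pOmega$. So $\rho=e^{-Md}$ gives $\partial_\nu\rho/\rho=+M$, as you write, and hence $\partial_\nu z=\rho\,(\partial_\nu y+My)$ with $y=\phi+\psi\ge2\kappa\phi$. The weight therefore adds a large \emph{positive} term and favours boundary maxima; no bound $\partial_\nu z\le(C-cM\kappa)\phi$ can follow. The weight must increase into $\Omega$. Take $\rho=e^{Md}$, or, as in the paper, $\rho=1+Kd_{\pOmega}$ in a collar, smoothed with $\rho\ge1$ inside. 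Smoothing is needed anyway, since $d$ is not $C^2$ at the cut locus and Step 2 uses $|D^2\rho|\le C$. Then $\partial_\nu\rho=-K$ on $\pOmega$, and $\partial_\nu z\le(C-2\kappa K)\phi<0$ once $K>C/(2\kappa)$.

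Several other points in Step 1 need to be made explicit.

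\textbf{The tangential Hessian term.} Differentiating the boundary condition tangentially gives, with $\nu_\Omega(x_0)=-e_d$, $(1-(\beta^h)^2)w_{di}=\sum_k\eta_{ik}w_k+\beta^h_i\phi+\frac{\beta^h}{\phi}\sum_{j<d}w_jw_{ji}$. So for $\beta^h(x_0)\neq0$, the pure normal derivative $\partial_\nu y$ contains $-\frac{\beta^h}{(1-(\beta^h)^2)\phi^2}\sum_{j,k<d}w_jw_kw_{jk}$, a tangential Hessian term that no boundary datum controls. Your claim that the obstruction terms are $O(\phi)$ holds only after using $\sgrad z(x_0)=0$ at the boundary maximum. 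Since $\rho$ is constant on $\pOmega$, this gives $y_i(x_0)=0$, hence $\frac1\phi\sum_jw_jw_{ji}=O(\phi)$. The paper instead uses the oblique operator $\xi^\varepsilon$ (direction $\nu_\Omega+\beta^h\nabla w^\varepsilon/\phi$), in which these cubic terms cancel identically, and shows $\xi^\varepsilon(z)<0$ on all of $\pOmega$. At a boundary maximum the two approaches agree, since there $\xi^\varepsilon(z)=(1-(\beta^h)^2)\partial_\nu z$.

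\textbf{Do not use the PDE to eliminate $w_{\nu\nu}$.} That brings in terms like $\phi(w-g)/h$ and destroys uniformity in $h$. The corrector removes $w_{\nu\nu}$ exactly, since $\beta^h\phi\,w_{dd}/\phi-\beta^hw_{dd}=0$. The constant $K$ cancels nothing; it only has to dominate the leftover $O(\phi)$ terms.

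\textbf{The missing drift term.} With your $L=h\,a^{ij}\partial_{ij}$, the correct interior identity is $\frac{|\nabla w|^2}{\phi}-L\phi+h|\Phi^{1/2}D^2w\,\Phi^{1/2}|^2+B\cdot\nabla\phi=\frac{\nabla g^\varepsilon\cdot\nabla w}{\phi}$, where $B^j=-h\,\partial_i(\phi_{ij}(\nabla w))$. This term is harmless, because $B$ is linear in $D^2w$ with the same $\Phi$-weighted structure. The paper proves $z-h\,a^{ij}\partial_{ij}z+B\cdot\nabla z\le C$, and the drift vanishes at an interior maximum. But you must carry it.
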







The remainder of this section is devoted to the proof of Proposition \ref{prop:a-priori-estimate}.

\medskip

Consider an extension $\widetilde{\beta}^h\in C^\infty(\overline{\Omega})$ of the function $\beta^h\in C^\infty(\partial\Omega)$ on the whole domain $\overline{\Omega}$ satisfying 
$$
\|\widetilde{\beta}^h\|_{C^0(\overline{\Omega})} \leq \|\beta^h\|_{C^0(\partial\Omega)}\quad\text{and}\quad\|\nabla\widetilde{\beta}^h\|_{C^1(\overline{\Omega})}\leq C(\Omega,\|\beta^h\|_{C^2(\partial\Omega)}).
$$
We use the same notation $\beta^h$ to denote the extension by abuse of notation. Let $\eta=\eta(x)$ be a smooth function on $\overline{\Omega}$ satisfying
\[
D\eta = \nu_{\Omega}\quad\text{on}\quad\partial\Omega\quad\text{and}\quad|D\eta|\leq1\quad\text{on}\quad\overline{\Omega}.
\]
We now set
\[
y^\varepsilon := \phi^\varepsilon + \beta^h D\eta \cdot Dw^{\varepsilon}, \quad \text{where } \phi^\varepsilon = \sqrt{\varepsilon^2 + |\nabla w^\varepsilon|^2}.
\]

\begin{proposition}\label{prop:w/o-multiplier}
There exists a constant $C=C(\Omega,\kappa,\|\beta^h\|_{C^2(\partial\Omega)})>0$ such that
\begin{align*}
\xi^\varepsilon(y^{\varepsilon})\leq Cy^{\varepsilon}\qquad\text{on}\quad\partial\Omega.
\end{align*}
Here, $\xi^\varepsilon:C^1(\closure{\Omega})\to\R$ is the oblique boundary operator defined by
$$
\xi^\varepsilon(f) := \nu_{\Omega}\cdot Df + \beta\frac{Df\cdot Dw^{\varepsilon}}{\sqrt{\varepsilon^2+|Dw^{\varepsilon}|^2}}\qquad\text{for}\quad f\in C^1(\overline{\Omega}).
$$

\end{proposition}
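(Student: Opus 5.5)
The plan is a direct computation at an arbitrary point $x\in\partial\Omega$. I read the coefficient in $\xi^\varepsilon$ as $\beta^h$, the one in the boundary condition of \eqref{eq:E-L-approximate}. Write $w=w^\varepsilon$, $\phi=\phi^\varepsilon$, and $b:=\nu_\Omega+\beta^h Dw/\phi$, so that $\xi^\varepsilon(f)=b\cdot Df$. The idea is to show three things: $y^\varepsilon$ is comparable to $\phi$ on $\partial\Omega$; every second-order term in $\xi^\varepsilon(y^\varepsilon)$ is a tangential derivative of the boundary condition; and all remaining terms are first order.

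\emph{Step 1: lower bound for $y^\varepsilon$ on $\partial\Omega$.} Since $D\eta=\nu_\Omega$ there, the boundary condition $\nu_\Omega\cdot Dw=-\beta^h\phi$ gives
\[
y^\varepsilon=\phi-(\beta^h)^2\phi=(1-(\beta^h)^2)\phi .
\]
By \eqref{assumption:kappa}, $1-(\beta^h)^2\ge 1-(1-2\kappa)^2\ge 2\kappa$. Hence $y^\varepsilon\ge 2\kappa\phi$, and it suffices to prove $\xi^\varepsilon(y^\varepsilon)\le C\phi$ on $\partial\Omega$.

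\emph{Step 2: expand $b\cdot Dy^\varepsilon$.} We have
\[
Dy^\varepsilon=\frac{D^2w\,Dw}{\phi}+(D\eta\cdot Dw)\,D\beta^h+\beta^h D^2\eta\,Dw+\beta^h D^2w\,D\eta .
\]
The second and third terms, paired with $b$, are bounded by $C(\Omega,\|\beta^h\|_{C^1})\,|Dw|\le C\phi$, because $|b|\le 2$ and $|D\eta|\le 1$. On $\partial\Omega$ the Hessian terms combine, using $D\eta=\nu_\Omega$ and the symmetry of $D^2w$, into
\[
\tau\cdot D^2w\,b,\qquad \tau:=\frac{Dw}{\phi}+\beta^h\nu_\Omega .
\]
The key observation is that $\tau\cdot\nu_\Omega=\nu_\Omega\cdot Dw/\phi+\beta^h=0$ by the boundary condition. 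So $\tau$ is tangent to $\partial\Omega$, with $|\tau|\le 2$.

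\emph{Step 3: the main step, controlling $\tau\cdot D^2w\,b$.} Since $\tau$ is tangential, I differentiate the identity $D\eta\cdot Dw+\beta^h\phi=0$, which holds along $\partial\Omega$, in the direction $\tau$. This gives
\[
(D^2\eta\,\tau)\cdot Dw+D\eta\cdot D^2w\,\tau+(\tau\cdot D\beta^h)\phi+\beta^h\frac{Dw\cdot D^2w\,\tau}{\phi}=0 .
\]
On $\partial\Omega$ this reads
\[
\tau\cdot D^2w\,b=-(D^2\eta\,\tau)\cdot Dw-(\tau\cdot D\beta^h)\phi ,
\]
which is bounded by $C(\Omega,\|\beta^h\|_{C^1})\,\phi$. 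The point to check carefully is that using $D\eta$ as the extension of $\nu_\Omega$ is legitimate. This holds because tangential derivatives of $D\eta$ and of $\nu_\Omega$ agree on $\partial\Omega$, so $D^2\eta$ enters only through geometric quantities of $\Omega$.

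Combining Steps 2 and 3 yields $\xi^\varepsilon(y^\varepsilon)\le C\phi$ on $\partial\Omega$. Step 1 then gives $\xi^\varepsilon(y^\varepsilon)\le (C/2\kappa)\,y^\varepsilon$, with a constant depending only on $\Omega$, $\kappa$ and $\|\beta^h\|_{C^2(\partial\Omega)}$ (through the extension). The only genuine obstacle is recognising that the Hessian terms collapse onto the tangential vector $\tau$, which is exactly what the additive corrector $\beta^h D\eta\cdot Dw$ in $y^\varepsilon$ is designed to produce. The rest is bookkeeping.
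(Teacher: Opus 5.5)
Your proof is correct, and it reaches the estimate by a different and shorter route than the paper. The paper works at $x_0\in\partial\Omega$ in geodesic coordinates with $\nu_\Omega(x_0)=-e_d$ and splits into the cases $\beta^h(x_0)=0$ and $\beta^h(x_0)\neq0$. In the second case it differentiates the boundary identity $y^\varepsilon=(1-(\beta^h)^2)\phi^\varepsilon$ along each coordinate tangent direction and solves for all mixed derivatives $w^\varepsilon_{di}$, which requires dividing by $\beta^h(x_0)$. It then computes $y^\varepsilon_i$ and $y^\varepsilon_d$ separately, and only after substituting does it see the cubic terms $\sum w^\varepsilon_iw^\varepsilon_jw^\varepsilon_{ij}$ cancel.

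You instead observe directly that the whole Hessian part of $b\cdot Dy^\varepsilon$ is the single quantity $\tau\cdot D^2w\,b$. The vector $\tau=Dw/\phi+\beta^h\nu_\Omega$ is tangential precisely because of the boundary condition. One tangential differentiation of $D\eta\cdot Dw+\beta^h\phi=0$, in the direction $\tau$ only, then expresses this quantity through first-order terms.

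This buys you three things:
\begin{itemize}
\item it explains the cancellation structurally;
\item it removes the case distinction and the division by $\beta^h(x_0)$;
\item it does not depend on the coordinate conventions (parallel-transported frames, Weingarten identities) used in the paper's computation.
\end{itemize}
Nothing is lost either. Expanding your four first-order terms at $x_0$ in the paper's coordinates reproduces term by term the paper's closing formula for $\xi^\varepsilon(y^\varepsilon)$. This includes the second-fundamental-form term $-(1-(\beta^h)^2)\frac{1}{\phi^\varepsilon}\sum_{i,j<d}\eta_{ij}w^\varepsilon_iw^\varepsilon_j$.

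The point you flag as needing care is automatic. The function $D\eta\cdot Dw+\beta^h\phi$ vanishes identically on $\partial\Omega$, so its derivative along any tangent vector vanishes, and $|D^2\eta|$ is simply a constant depending on $\Omega$. Your reading of the coefficient in $\xi^\varepsilon$ as $\beta^h$ agrees with what the paper actually uses in its own computation.
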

\begin{proof}
Let $x_0\in\partial\Omega$.

\medskip

\textbf{Step 1. Local coordinate setup at $x_0$.} By translation and rotation, we can assume $x_0=0$ and $\nu_{\Omega}(x_0) = -\overrightarrow{e_d}$ without loss of generality and let $(x_1,\cdots,x_{d-1})$ be the geodesic coordinate of $x_0\in\partial\Omega$. Take the transport of the tangential directions $\left\{\frac{\partial}{\partial x_i}\right\}_{i=1}^{d-1}$ along the geodesic $x_d\in[0,\delta)$ for some small number $\delta>0$ so that the geodesic coordinate is set around the point $x_0\in\partial\Omega$ (see \cite{GT83}).  Let $\nabla_{\partial\Omega}$ denote the induced connection on $\partial\Omega$ by the Euclidean connection $D$, and let the subscript $i\,(1\leq i\leq d-1)$ denote the derivative by the connection $\nabla_{\partial\Omega,i}$.

As a result of the coordinate setup, we have $D_i \eta = 0$ for $i < d$ and $D_d \eta = -1$ along the line $\{x_d\overrightarrow{e_d}\,:\,x_d\in[0,\delta)\}$. Furthermore, we can take a function $\eta\in C^{\infty}(\overline{\Omega})$ such that the second fundamental form of $\partial\Omega$ at $x_0$ is $\{D_{ij}\eta(x_0)\}_{1\leq i,j\leq d-1}$. The boundary condition $B^\epsilon = 0$ can be written as
\begin{align}\label{eq:BC-at-x0}
\frac{\partial w^\varepsilon}{\partial \nu_\Omega} + \beta^h \phi^\varepsilon = 0\quad\text{on $\partial\Omega$},\quad\text{which becomes}\quad w_d^{\varepsilon} = \beta^h \phi^\varepsilon\text{ at } x_0.
\end{align}
Also, the function $y^\varepsilon$ simplifies to
\[
y^\varepsilon = \phi^\varepsilon + \beta^h D\eta\cdot Dw^{\varepsilon} = \phi^{\varepsilon} + \beta^h (-\beta^h\phi^{\varepsilon}) = (1-(\beta^h)^2)\phi^{\varepsilon}\qquad\text{on $\partial\Omega$}.
\]

\medskip

We divide the proof into the cases when $\beta^h(x_0)=0$ and when $\beta^h(x_0)\neq0$. We start with the case $\beta^h(x_0)=0$ as this is simpler than the other case.

\medskip

\textbf{Step 2. The case when $\beta^h(x_0)=0$.} 
We compute $y_d^{\varepsilon}$ at $x_0$ to get
\[
y^\varepsilon_d = \phi_d^\varepsilon + \beta^h_d \sum_{k=1}^d \eta_k w^\varepsilon_k + \beta^h \partial_d \left(\sum_{k=1}^d \eta_k w^\varepsilon_k\right).
\]
Since $\eta_d = -1, \eta_j = 0\, (1\leq j\leq d-1)$, and $\beta^h=0$,
$w^\varepsilon_d=0$ at $x_0$, we have
\[
y^\varepsilon_d = \frac{\sum_{j=1}^{d-1}
w^\varepsilon_j w^\varepsilon_{jd} + w^\varepsilon_d
w^\varepsilon_{dd}}{\phi^\varepsilon} + \beta^h_d (-w^\varepsilon_d) + 0 =
\frac{\sum_{j=1}^{d-1} w^\varepsilon_j w^\varepsilon_{jd}}{\phi^\varepsilon}.
\]

To find $w^\varepsilon_{jd}$ when $\beta^h=0$, we differentiate the boundary condition
$w^\varepsilon \cdot \nu_\Omega + \beta^h \phi^\varepsilon = 0$ along a tangential
direction $x_j$ ($1\leq j\leq d-1$) to obtain
\[
\partial_j (w^\varepsilon \cdot \nu_\Omega) + \partial_j (\beta^h
\phi^\varepsilon) = 0.
\]
Using the Weingarten equation, we have
\[
-w^\varepsilon_{dj} + \sum_{k=1}^{d-1}
w^\varepsilon_k \eta_{jk} + \beta^h_j \phi^\varepsilon = 0 \implies
w^\varepsilon_{dj} = \sum_{k=1}^{d-1} \eta_{jk} w^\varepsilon_k + \beta^h_j
\phi^\varepsilon\quad\text{at }x_0.
\]

Substitute $w^\varepsilon_{dj}$ back into
$\xi^\varepsilon(y^\varepsilon) = -y^\varepsilon_d$ to obtain
\[
\xi^\varepsilon(y^\varepsilon) = -\frac{\sum_{j=1}^{d-1} w^\varepsilon_j
(\sum_{k=1}^{d-1} \eta_{jk} w^\varepsilon_k + \beta^h_j
\phi^\varepsilon)}{\phi^\varepsilon} = -\frac{1}{\phi^\varepsilon}
\sum_{j,k=1}^{d-1} \eta_{jk} w^\varepsilon_j w^\varepsilon_k - \sum_{j=1}^{d-1}
\beta^h_j w^\varepsilon_j.
\]
From the fact that $|w^\varepsilon_i| \le \phi^\varepsilon$ and that
\[
2\kappa\phi^{\varepsilon} \leq y^{\varepsilon} \leq (2-2\kappa)\phi^{\varepsilon},\quad\text{which follows by }|D\eta|\leq 1,
\]
we conclude $$\xi^\varepsilon(y^\varepsilon) \le Cy^\varepsilon\qquad\text{at }x_0$$ for some constant $C=C(\Omega,\kappa,\|\beta^h\|_{C^2(\partial\Omega)})>0$.

\medskip

From now on, we assume the other case when $\beta(x_0)\neq0$.

\medskip

\textbf{Step 3. Tangential derivatives $y^\varepsilon_i$ ($1 \le i \le d-1$) at $x_0$.} Differentiating $y^\varepsilon = \phi^\varepsilon + \beta^h \sum_{k=1}^d \eta_k w^\varepsilon_k$ with respect to $x_i$ for $i=1,\cdots,d-1$, we obtain
\begin{align*}
y^\varepsilon_i = D_i \phi^\varepsilon + \beta^h_i \sum_{k=1}^d \eta_k w^\varepsilon_k + \beta^h \sum_{k=1}^d (\eta_{ki} w^\varepsilon_k + \eta_k w^\varepsilon_{ki}),    
\end{align*}
which becomes
\[
\displaystyle y^\varepsilon_i = \frac{\sum_{k=1}^d w^\varepsilon_k w^\varepsilon_{ki}}{\phi^\varepsilon} - \beta^h_i w^\varepsilon_d + \beta^h \sum_{j=1}^{d-1} \eta_{ji} w^\varepsilon_j + \beta^h \eta_{di} w^\varepsilon_d - \beta^h w^\varepsilon_{di}\quad\text{at }x_0.
\]
We split $\sum_{k=1}^d w^\varepsilon_k w^\varepsilon_{ki} = \sum_{j=1}^{d-1} w^\varepsilon_j w^\varepsilon_{ji} + w^\varepsilon_d w^\varepsilon_{di}$ and substitute $w^\varepsilon_d = \beta^h \phi^\varepsilon$ to get
\[
\displaystyle y^\varepsilon_i = \frac{\sum_{j=1}^{d-1} w^\varepsilon_j w^\varepsilon_{ji}}{\phi^\varepsilon} + \frac{(\beta^h \phi^\varepsilon) w^\varepsilon_{di}}{\phi^\varepsilon} - \beta^h_i w^\varepsilon_d + \beta^h \sum_{j=1}^{d-1} \eta_{ji} w^\varepsilon_j + \beta^h \eta_{di} w^\varepsilon_d - \beta^h w^\varepsilon_{di}\quad\text{at }x_0.
\]
The cancellation $\frac{\beta^h \phi^\varepsilon w^\varepsilon_{di}}{\phi^\varepsilon} - \beta^h w^\varepsilon_{di} = 0$ happens due to \eqref{eq:BC-at-x0} so that for $1 \le i \le d-1$, we can write
\begin{equation} \label{eq:yi}
y^\varepsilon_i = \frac{\sum_{j=1}^{d-1} w^\varepsilon_j w^\varepsilon_{ji}}{\phi^\varepsilon} - \beta^h_i w^\varepsilon_d + \beta^h \sum_{j=1}^{d-1} \eta_{ji} w^\varepsilon_j + \beta^h \eta_{di} w^\varepsilon_d\quad\text{at $x_0$ for $1 \le i \le d-1$}. 
\end{equation}

\medskip

\textbf{Step 4. Mixed derivative $w^\varepsilon_{di}$ from the boundary condition.} Since $y^\varepsilon = (1-(\beta^h)^2)\phi^\varepsilon$ holds on $\partial\Omega$, we differentiate this identity tangentially to get
\[
y^\varepsilon_i = -2\beta \beta_i \phi^\varepsilon + (1-(\beta^h)^2)\frac{\sum_{j=1}^{d-1} w^\varepsilon_j w^\varepsilon_{ji} + w^\varepsilon_d w^\varepsilon_{di}}{\phi^\varepsilon}\quad\text{for $1 \le i \le d-1$ on $\partial\Omega$}.
\]
Equating with \eqref{eq:yi} and using \eqref{eq:BC-at-x0}, we obtain
\begin{align*}
\frac{\sum_{j=1}^{d-1} w^\varepsilon_j w^\varepsilon_{ji}}{\phi^\varepsilon} -& \beta^h_i (\beta^h \phi^\varepsilon) + \beta^h \sum_{j=1}^{d-1} \eta_{ji} w^\varepsilon_j + (\beta^h)^2 \eta_{di} \phi^\varepsilon = \\
&-2\beta^h \beta^h_i \phi^\varepsilon + (1-(\beta^h)^2)\frac{\sum_{j=1}^{d-1} w^\varepsilon_j w^\varepsilon_{ji}}{\phi^\varepsilon} + \beta^h(1-(\beta^h)^2) w^\varepsilon_{di}.
\end{align*}
Rearranging yields
\begin{equation} \label{eq:wdi}
(1-(\beta^h)^2) w^\varepsilon_{di} = \frac{\beta^h}{\phi^{\varepsilon}} \sum_{j=1}^{d-1} w^\varepsilon_j w^\varepsilon_{ji} + \beta^h_i \phi^\varepsilon + \sum_{j=1}^{d-1} \eta_{ji} w^\varepsilon_j + \beta^h \eta_{di} \phi^\varepsilon\quad\text{for $1 \le i \le d-1$ at $x_0$}.
\end{equation}
Here, we used the assumption that $\beta^h(x_0)\neq0$.

\medskip

\textbf{Step 5. Normal derivative $y^\varepsilon_d$.} Differentiating $y^\varepsilon$ in $x_d$, we get
\[
y^\varepsilon_d = \phi_d^\varepsilon + \beta^h_d \sum_{k=1}^d \eta_k w^\varepsilon_k + \beta^h \sum_{k=1}^d (\eta_{kd} w^\varepsilon_k + \eta_k w^\varepsilon_{kd}),
\]
which becomes
\[
y^\varepsilon_d = \frac{\sum_{j=1}^{d-1} w^\varepsilon_j w^\varepsilon_{jd} + w^\varepsilon_d w^\varepsilon_{dd}}{\phi^\varepsilon} - \beta^h_d w^\varepsilon_d + \beta^h \sum_{j=1}^{d-1} \eta_{jd} w^\varepsilon_j + \beta^h \eta_{dd} w^\varepsilon_d - \beta^h w^\varepsilon_{dd}\quad\text{ at $x_0$}.
\]
The cancellation $\frac{\beta^h \phi^\varepsilon w^\varepsilon_{dd}}{\phi^\varepsilon} - \beta^h w^\varepsilon_{dd} = 0$ happens due to \eqref{eq:BC-at-x0} so that we can write
\begin{equation} \label{eq:yd}
y^\varepsilon_d = \frac{\sum_{j=1}^{d-1} w^\varepsilon_j w^\varepsilon_{jd}}{\phi^\varepsilon} - \beta^h \beta^h_d \phi^\varepsilon + \beta^h \sum_{j=1}^{d-1} \eta_{jd} w^\varepsilon_j + (\beta^h)^2 \eta_{dd} \phi^\varepsilon\quad\text{ at $x_0$}.
\end{equation}

\medskip

\textbf{Step 6. Evaluating the boundary operator $\xi^\varepsilon(y^\varepsilon)$.} The operator $\xi^\varepsilon(y^\varepsilon)$ at $x_0$ is
\[
\xi^\varepsilon(y^\varepsilon) = \nu_\Omega \cdot Dy^\varepsilon + \beta^h \frac{Dy^\varepsilon \cdot Dw^\varepsilon}{\phi^\varepsilon} = -y^\varepsilon_d + \frac{\beta^h}{\phi^\varepsilon}\left( \sum_{i=1}^{d-1} y^\varepsilon_i w^\varepsilon_i + y^\varepsilon_d w^\varepsilon_d \right).
\]
From the boundary condition \eqref{eq:BC-at-x0}, this simplifies to
\[
\xi^\varepsilon(y^\varepsilon) = -(1-(\beta^h)^2) y^\varepsilon_d + \frac{\beta^h}{\phi^\varepsilon} \sum_{i=1}^{d-1} y^\varepsilon_i w^\varepsilon_i\quad\text{at }x_0.
\]
We expand the first term and the second term in order. By \eqref{eq:wdi} and \eqref{eq:yd}, we have
\begin{multline*}
-(1-(\beta^h)^2) y^\varepsilon_d = -\left( \frac{\beta^h}{(\phi^\varepsilon)^2} \sum_{j,k=1}^{d-1} w^\varepsilon_j w^\varepsilon_k w^\varepsilon_{kj} + \sum_{j=1}^{d-1} \beta^h_j w^\varepsilon_j + \frac{1}{\phi^\varepsilon} \sum_{j,k=1}^{d-1} \eta_{kj} w^\varepsilon_j w^\varepsilon_k + \beta^h \sum_{j=1}^{d-1} \eta_{dj} w^\varepsilon_j \right) \\
+ (1-(\beta^h)^2)\beta^h \beta^h_d \phi^\varepsilon - (1-(\beta^h)^2)\beta^h \sum_{j=1}^{d-1} \eta_{jd} w^\varepsilon_j - (1-(\beta^h)^2)(\beta^h)^2 \eta_{dd} \phi^\varepsilon.
\end{multline*}
On the other hand, by \eqref{eq:yi}, multiplying by $w^\varepsilon_i$ and summing, the second term becomes
\[
\frac{\beta^h}{\phi^\varepsilon} \sum_{i=1}^{d-1} y^\varepsilon_i w^\varepsilon_i = \frac{\beta^h}{(\phi^\varepsilon)^2} \sum_{i,j=1}^{d-1} w^\varepsilon_i w^\varepsilon_j w^\varepsilon_{ji} - (\beta^h)^2 \sum_{i=1}^{d-1} \beta^h_i w^\varepsilon_i + \frac{(\beta^h)^2}{\phi^\varepsilon} \sum_{i,j=1}^{d-1} \eta_{ji} w^\varepsilon_i w^\varepsilon_j + (\beta^h)^3 \sum_{i=1}^{d-1} \eta_{di} w^\varepsilon_i.
\]
As a consequence of adding the terms, the $\sum w^\varepsilon_i w^\varepsilon_j w^\varepsilon_{ji}$ terms cancel. Therefore, we obtain
\[
\xi^\varepsilon(y^\varepsilon) = (1-(\beta^h)^2) \left( -\frac{1}{\phi^\varepsilon} \sum_{i,j=1}^{d-1} \eta_{ij} w^\varepsilon_i w^\varepsilon_j -\frac{1+(\beta^h)^2}{1-(\beta^h)^2} \sum_{i=1}^{d-1} \beta^h_i w^\varepsilon_i - 2\beta^h \sum_{i=1}^{d-1} \eta_{di} w^\varepsilon_i + \beta^h(\beta^h_d - \beta^h \eta_{dd})\phi^\varepsilon \right).
\]

Now, as in the last part of Step 2, we finally conclude that $$\xi^\varepsilon(y^\varepsilon) \le Cy^\varepsilon\qquad\text{at }x_0$$ for some constant $C=C(\Omega,\kappa,\|\beta^h\|_{C^2(\partial\Omega)})>0$. As $x_0\in\partial\Omega$ was arbitrary, we obtain the conclusion in all cases and finish the proof.
\end{proof}

\medskip

We consider the function $z^{\varepsilon}:=\rho y^{\varepsilon}$, where $\rho\in C^{\infty}(\overline{\Omega})$ is a multiplier that will be chosen as follows.

\begin{proposition}\label{prop:multiplier}
There exists a function $\rho\in C^{\infty}(\overline{\Omega})$ that depends only on $\Omega,\kappa,\|\beta^h\|_{C^2(\partial\Omega)}$ such that $\rho\geq1$ on $\overline{\Omega}$, $\rho=1$ on $\partial\Omega$, and $z^{\varepsilon}=\rho y^{\varepsilon}$ satisfies
\begin{equation*}
\xi^\varepsilon(z^{\varepsilon})<0\qquad\text{on}\quad\pOmega.
\end{equation*}
\end{proposition}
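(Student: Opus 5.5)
The multiplier will be a function that equals $1$ on $\pOmega$ and grows linearly in the distance to the boundary, with slope $M$ large enough that its derivative along the oblique direction beats the constant $C$ from Proposition~\ref{prop:w/o-multiplier}. Let $d_{\pOmega}(x):=\operatorname{dist}(x,\pOmega)$ for $x\in\closure{\Omega}$. Since $\pOmega$ is smooth and compact, $d_{\pOmega}$ is smooth on a collar $\{0\le d_{\pOmega}<2\delta_0\}$, and there $D d_{\pOmega}=-\nu_\Omega$ on $\pOmega$. Fix a smooth nondecreasing $\chi:[0,\infty)\to[0,\infty)$ with $\chi(s)=s$ for $s\le\delta_0/2$ and $\chi$ constant for $s\ge\delta_0$. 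Set
\[
\rho:=1+M\chi(d_{\pOmega}),
\]
extended by the constant $1+M\chi(\delta_0)$ away from the collar, with $M>0$ to be chosen. Then $\rho\in C^\infty(\closure{\Omega})$, $\rho\ge1$, $\rho=1$ on $\pOmega$, and $D\rho=-M\nu_\Omega$ on $\pOmega$. Once $M$ is fixed in terms of $\Omega,\kappa,\|\beta^h\|_{C^2(\pOmega)}$, $\rho$ depends only on these quantities.

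Next I compute $\xi^\varepsilon(z^\varepsilon)$ on $\pOmega$. The operator $\xi^\varepsilon$ is a first-order derivation, so $\xi^\varepsilon(\rho y^\varepsilon)=\rho\,\xi^\varepsilon(y^\varepsilon)+y^\varepsilon\,\xi^\varepsilon(\rho)$; here the coefficient in $\xi^\varepsilon$ is read as $\beta^h$, as in the proof of Proposition~\ref{prop:w/o-multiplier}. On $\pOmega$ we have $\rho=1$, so Proposition~\ref{prop:w/o-multiplier} bounds the first term by $Cy^\varepsilon$. For the second term, $D\rho=-M\nu_\Omega$ together with the boundary condition $\nu_\Omega\cdot Dw^\varepsilon=-\beta^h\phi^\varepsilon$ from \eqref{eq:E-L-approximate} gives
\[
\xi^\varepsilon(\rho)=-M+\beta^h\frac{-M\,\nu_\Omega\cdot Dw^\varepsilon}{\phi^\varepsilon}=-M\bigl(1-(\beta^h)^2\bigr)\le -M\bigl(1-(1-2\kappa)^2\bigr)=-4\kappa(1-\kappa)M,
\]
where the inequality uses \eqref{assumption:kappa}. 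Hence on $\pOmega$,
\[
\xi^\varepsilon(z^\varepsilon)\le \bigl(C-4\kappa(1-\kappa)M\bigr)y^\varepsilon .
\]

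To get strict negativity I use the lower bound $y^\varepsilon\ge2\kappa\phi^\varepsilon\ge2\kappa\varepsilon>0$, which was established in Step 2 of the proof of Proposition~\ref{prop:w/o-multiplier} from $|D\eta|\le1$ and \eqref{assumption:kappa}. Choosing $M:=C/(4\kappa(1-\kappa))+1$ then gives $\xi^\varepsilon(z^\varepsilon)\le-4\kappa(1-\kappa)y^\varepsilon<0$ on $\pOmega$. This $M$ depends only on $C=C(\Omega,\kappa,\|\beta^h\|_{C^2(\pOmega)})$ and $\kappa$, and in particular not on $\varepsilon$ or $w^\varepsilon$.

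I do not expect a serious obstacle: the substantive boundary computation is already done in Proposition~\ref{prop:w/o-multiplier}. The one point needing care is that the estimate from Proposition~\ref{prop:w/o-multiplier} is proportional to $y^\varepsilon$. This matters because $y^\varepsilon$ is strictly positive, which is exactly what turns the linear bound into a strict inequality and lets the multiplier's normal derivative dominate.
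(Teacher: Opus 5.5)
Your proof is correct and follows the paper's argument. You use the same multiplier $\rho=1+M\,d_{\partial\Omega}$ near the boundary, the product rule $\xi^\varepsilon(\rho y^\varepsilon)=\rho\,\xi^\varepsilon(y^\varepsilon)+y^\varepsilon\,\xi^\varepsilon(\rho)$ with Proposition~\ref{prop:w/o-multiplier}, and the positivity of $y^\varepsilon$ to choose $M$ large. The only difference is cosmetic: you compute $\xi^\varepsilon(\rho)=-M(1-(\beta^h)^2)$ exactly from the boundary condition, whereas the paper just bounds $\xi^\varepsilon(\rho)\le -K+K(1-2\kappa)=-2\kappa K$ by Cauchy--Schwarz and takes $K>C/(2\kappa)$.
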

\begin{proof}
Consider a function $\rho\in C^{\infty}(\overline{\Omega})$ that is of the form
\begin{align*}
\rho = 1 + Kd_{\partial\Omega}\qquad\text{near the boundary }\partial\Omega
\end{align*}
and $\rho\geq1$ on $\overline{\Omega}$, $\rho=1$ on $\partial\Omega$. Here, $d_{\partial\Omega}=\mathrm{dist}(\cdot,\partial\Omega)$ and $K=K(\Omega,\kappa,\|\beta^h\|_{C^2(\partial\Omega)})>0$ is a constant to be determined. Then, it holds that
\begin{align*}
\xi^\varepsilon(\rho) = \left(\nu_{\Omega}+\beta\frac{\nabla w^{\varepsilon}}{\sqrt{\varepsilon^2+|\nabla w^{\varepsilon}|^2}}\right) \cdot \left(-K\nu_{\Omega}\right) \leq -K + K(1-2\kappa) = -2\kappa K\qquad\text{on}\quad\partial\Omega.
\end{align*}
Therefore, by Proposition \ref{prop:w/o-multiplier}, since $y^\varepsilon > 0$ for every $\varepsilon > 0$, we have
\begin{align*}
\xi^\varepsilon(z^{\varepsilon}) = \rho \xi^\varepsilon(y^{\varepsilon}) + y^{\varepsilon} \xi^\varepsilon(\rho)  \leq Cy^{\varepsilon} - 2\kappa K y^{\varepsilon} = y^{\varepsilon}(C-2\kappa K)<0\qquad\text{on}\quad\partial\Omega
\end{align*}
once we choose $K=K(\Omega,\kappa,\|\beta^h\|_{C^2(\partial\Omega)})>0$ such that $K>\frac{C}{2\kappa}$. Here, $C(\Omega,\kappa,\|\beta^h\|_{C^2(\partial\Omega)})>0$ is the constant from Proposition \ref{prop:w/o-multiplier}. We finish the proof.
\end{proof}

\begin{proposition}\label{prop:equation-v}
Let the subscript $k=1,\cdots,d$ denote the partial derivative with respect to the variable $x_k$. Let $L$ denote the operator given by
\[
L[u] = u - h \phi^{\varepsilon}_{ij}(\nabla w^{\varepsilon}) \partial_{ij} u.
\]

Then, there exists a constant $C=C(\Omega,\kappa,\|\beta^h\|_{C^2(\partial\Omega)},\|g\|_{W^{1,\infty}(\Omega)})>0$ such that the following hold.

\begin{enumerate}[label=(\roman*)]
\item The function $y^{\varepsilon}$ satisfies
\begin{align*}
L[y^{\varepsilon}] + B^{\varepsilon}\cdot\nabla y^{\varepsilon}\leq C.
\end{align*}
Here, $B$ is the vector field given by
\[
(B^{\varepsilon})^j = -h \partial_i (\phi^{\varepsilon}_{ij}(\nabla w^{\varepsilon}))\qquad\text{for each }j.
\]

\item The function $z^{\varepsilon}=\rho y^{\varepsilon}$ satisfies
\begin{align*}
L[z^{\varepsilon}] + B^{\varepsilon}\cdot\nabla z^{\varepsilon}\leq C.
\end{align*}
\end{enumerate}


\end{proposition}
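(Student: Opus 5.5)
Throughout, write $w=w^\varepsilon$, $\phi=\phi^\varepsilon$, and $a^{ij}=\phi^\varepsilon_{ij}(\nabla w)$, so that $0\le a^{ij}\le \phi^{-1}I$. The approach is a Bernstein-type computation. We differentiate the interior equation of \eqref{eq:E-L-approximate}, written in nondivergence form as $w-h\,a^{ij}w_{ij}=g^\varepsilon$, in the direction $x_k$:
\[
w_k - h\,a^{ij}w_{ijk} - h\,\partial_k(a^{ij})\,w_{ij} = g^\varepsilon_k .
\]
Since $a^{ij}w_{ij}=\partial_i(\phi_i(\nabla w))$, the term $\partial_k(a^{ij})w_{ij}$ can be rewritten as $\partial_i(a^{ij})w_{jk}$. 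This equals $-h^{-1}(B^\varepsilon)^j w_{jk}$, which is exactly the drift appearing in the statement.

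Next we contract the differentiated equation with $\phi_k(\nabla w)=w_k/\phi$. We use the identities $L[\phi]=\phi - h a^{ij}\phi_{ij}$ and $\phi_{ij}=\phi_k w_{kij}+\phi_{kl}w_{ki}w_{lj}$. This should yield
\[
L[\phi]+B^\varepsilon\cdot\nabla\phi = \frac{|\nabla w|^2}{\phi} + \frac{\nabla g^\varepsilon\cdot\nabla w}{\phi} - h\,a^{ij}a^{kl}w_{ki}w_{lj}\cdot(\text{positive}) .
\]
The last term is nonpositive, and we keep it as a good quadratic term $-h\,\mathrm{tr}(A D^2w A D^2 w)\,\phi$ up to normalization. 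Two sign facts are used here. First, $|\nabla w|^2/\phi - \phi = -\varepsilon^2/\phi\le 0$, which gives $L[\phi]-\phi$ control in the right direction. Second, the $g$-term is bounded by $\|\nabla g\|_\infty$. Together these give $L[\phi]+B^\varepsilon\cdot\nabla\phi \le \phi-\phi+\|\nabla g\|_{\infty} - hQ$, where $Q\ge0$ denotes the Hessian quadratic form.

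For the corrector $\beta^h D\eta\cdot Dw=V\cdot\nabla w$ with $V=\beta^h D\eta$ smooth, we compute
\[
L[V\cdot\nabla w]+B^\varepsilon\cdot\nabla(V\cdot\nabla w)=V^k\bigl(w_k-h a^{ij}w_{ijk}+(B^\varepsilon)^jw_{jk}\bigr) - 2h a^{ij}V^k_i w_{kj} - h a^{ij}V^k_{ij}w_k + (B^\varepsilon)^jV^k_j w_k .
\]
The first group equals $V\cdot\nabla g^\varepsilon$ by the differentiated equation, so it is bounded. The term $h a^{ij}V^k_{ij}w_k$ is bounded by $C h|\nabla w|/\phi\le C$. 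The cross term $2ha^{ij}V^k_iw_{kj}$ is absorbed into $hQ$ via Cauchy--Schwarz in the $a$-metric, leaving $Ch\,\mathrm{tr}(a)\,\phi\le C$ (using $\mathrm{tr}\,a\le d/\phi$). The drift term $(B^\varepsilon)^jV^k_jw_k$ is the dangerous one, since $B^\varepsilon$ involves $h\,\partial a\sim h\,D^2w/\phi^2$. We plan to write it as $-h\partial_i(a^{ij})V^k_jw_k$ and bound it by $Ch|D^2 w|_a$-type quantities, which are again absorbed by $hQ$ plus a constant. This absorption is the \textbf{main obstacle}. We must check that $Q$ controls $a^{ij}a^{kl}w_{ki}w_{lj}$ in precisely the components that $\partial a$ produces, including the normal direction $\nabla w/|\nabla w|$, where $a$ degenerates like $\varepsilon^2/\phi^3$. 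If direct absorption fails there, the fallback is to use the homogeneity identity $a^{ij}w_j=\varepsilon^2 w_i/\phi^3$ to show that those components carry an extra factor $\varepsilon^2/\phi^2\le1$. Summing the two computations gives (i).

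For (ii), we expand
\[
L[\rho y]+B^\varepsilon\cdot\nabla(\rho y)=\rho\bigl(L[y]+B^\varepsilon\cdot\nabla y\bigr) - 2h a^{ij}\rho_i y_j + y\bigl(-h a^{ij}\rho_{ij}+B^\varepsilon\cdot\nabla\rho\bigr) - (\rho-1)\cdot 0 ,
\]
using $L[\rho y]=\rho y - h a^{ij}(\rho y)_{ij}$. The first term is at most $C\|\rho\|_\infty$ by (i), after retaining the good quadratic term $-h\rho Q$ from the proof of (i) rather than discarding it. The terms $h a^{ij}\rho_{ij}y$ are bounded by $Ch\phi^{-1}\cdot\phi$. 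The remaining terms $a^{ij}\rho_iy_j$ and $B^\varepsilon\cdot\nabla\rho\, y$ contain second derivatives of $w$ times $h$, and are absorbed into the retained $-h\rho Q$ exactly as in the obstacle step. The constants depend only on $\rho$, and hence only on $\Omega,\kappa,\|\beta^h\|_{C^2}$ by Proposition~\ref{prop:multiplier}.
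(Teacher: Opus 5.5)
Your skeleton matches the paper's: differentiate the equation, replace $h a^{ij}w_{ijk}$ by $w_k-g_k+B^jw_{jk}$, keep the good quadratic term, and use the product rule for $\rho y$. The step you treat as routine, however, is exactly where the argument fails.

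\textbf{Notation.} Write $\Phi=(a^{ij})$, $q=\nabla w/\phi$, $X=\Phi D^2w$, $S=\Phi^{1/2}D^2w\,\Phi^{1/2}$ and $G_{jk}=\partial_jV^k$.

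\textbf{The good term.} The good term from $L[\phi]+B\cdot\nabla\phi$ is exactly $-h|S|^2$. Also, the zeroth-order term there is $\phi-|\nabla w|^2/\phi=\varepsilon^2/\phi$, not $|\nabla w|^2/\phi$.

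\textbf{The cross term.} You write the cross term as $-2ha^{ij}V^k_iw_{kj}=-2hX:G$ and absorb it by Cauchy--Schwarz. That tacitly uses $\operatorname{tr}(D^2w\,\Phi\,D^2w)\le\phi|S|^2$, i.e.\ $\Phi\ge\phi^{-1}I$, which is false. In fact $\Phi=\phi^{-1}(I-q\otimes q)$ has eigenvalue $\varepsilon^2/\phi^3$ in the direction of $\nabla w$. Hence $X:G=S:\Phi^{1/2}G\Phi^{-1/2}$, where $\Phi^{-1/2}$ has operator norm $\phi^{3/2}/\varepsilon$.

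Concretely, take $d=2$, $\nabla w\parallel e_1$, $w_{11}=w_{22}=0$, $w_{12}=t$. Then $|S|^2=2\varepsilon^2t^2/\phi^4$, while the cross term is $-2ht\bigl(G_{21}/\phi+\varepsilon^2G_{12}/\phi^3\bigr)$. Maximizing $-h|S|^2+(\text{cross term})$ over $t$ leaves a quantity of order $hG_{21}^2\phi^2/\varepsilon^2$. This is bounded neither uniformly in $\varepsilon$ nor independently of $\phi$, the very quantity you are estimating.

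\textbf{The drift term.} The drift term $B^jG_{jk}w_k$ has the same defect on its own, and your homogeneity fallback does not cure it. From $\partial_m a^{ij}=-(a^{ij}w_m+a^{im}w_j+a^{jm}w_i)/\phi^2$ one gets $B=\frac h\phi\bigl(2Xq+(\operatorname{tr}X)q\bigr)$. The dangerous part $Xq=\Phi D^2w\,q$ involves the mixed tangential--normal second derivatives of $w$, and the identity $\Phi\nabla w=\varepsilon^2\phi^{-3}\nabla w$ says nothing about them. In the example above, $B^jG_{jk}w_k=2h|q|^2tG_{21}/\phi$, again of size $(\phi/\varepsilon)|S|$.

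\textbf{The missing idea: combine the two terms before estimating.} Their sum is $-2hX:U$ with $U=G(I-q\otimes q)-\frac12(q\cdot Gq)I$. Since $I-q\otimes q=\phi\Phi$, this gives $X:U=S:\widehat U$ with $\widehat U=\phi\Phi^{1/2}G\Phi^{1/2}-\frac12(q\cdot Gq)I$, which is bounded uniformly in $\varepsilon$ and $h$. Completing the square then gives $-h|S|^2-2hS:\widehat U\le h|\widehat U|^2\le Ch$. In the example, the two $O(t/\phi)$ contributions cancel up to the factor $1-|q|^2=\varepsilon^2/\phi^2$.

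\textbf{The same issue in (ii).} The terms $yB\cdot\nabla\rho$ and $-2ha^{ij}\rho_iy_j$ are each not dominated by $h\rho|S|^2$. Write $y=c\phi$ with $c=1+V\cdot q$. Only then does their Hessian-linear part combine to $-2hX:\bigl(\nabla\rho\otimes(I-q\otimes q)V-\frac c2(q\cdot\nabla\rho)I\bigr)$, which is again of the form $S:(\text{bounded})$ and can be absorbed into $-h\rho|S|^2$.

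As written, your plan does not produce a constant independent of $\varepsilon$.
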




\begin{proof}
In this proof, we drop the superscript $\varepsilon$ and adopt the Einstein convention for brevity.

\medskip

(i) \textbf{Step 1. Linearize the equation.} Consider the regularized equation for $w$:
\begin{equation}\label{eq:main}
w - h \partial_i (\phi_i(\nabla w)) = g 
\end{equation}
where $\phi(p) = \sqrt{\varepsilon^2 + |p|^2}$, $\phi_i = \frac{p_i}{\phi}$, and $\phi_{ij} = \frac{1}{\phi} \left( \delta_{ij} - \frac{p_i p_j}{\phi^2} \right)$. Define the linearized operator $L$ and the vector field $B$ as
\[
L[u] = u - h \phi_{ij}(\nabla w) \partial_{ij} u, \quad B^j = -h \partial_i (\phi_{ij}(\nabla w)).
\]
Differentiating \eqref{eq:main} with respect to $x_k$ and using the chain rule ($\partial_k \phi_i = \phi_{ij} w_{jk}$), we derive
\[
w_k - h \partial_i (\partial_k \phi_i) = g_k \quad\text{and thus}\quad w_k - h \phi_{ij} w_{ijk} - h (\partial_i \phi_{ij}) w_{jk} = g_k.
\]
Rearranging to solve for the third-order term yields the identity
\begin{equation}\label{eq:third_deriv}
h \phi_{ij} w_{ijk} = w_k - g_k + B^j w_{jk}. 
\end{equation}

\medskip

\textbf{Step 2. Differentiate the function $y$.} Let $A^k = \beta^h \partial_k \eta$ and $\psi = A^k w_k$. We compute the derivative of $y = \phi + \psi$ to have
\begin{equation}\label{eq:first_y}
\partial_i y = \frac{w_k w_{ki}}{\phi} + A^k w_{ki} + (\partial_i A^k) w_k.
\end{equation}
The second-order derivatives of $\phi$ and $\psi$ are
\[
\partial_{ij} \phi = \frac{w_k w_{kij}}{\phi} + \frac{1}{\phi} (w_{ki} w_{kj} - \partial_i \phi \partial_j \phi)
\]
and
\[
\partial_{ij} \psi = A^k w_{kij} + (\partial_i A^k) w_{kj} + (\partial_j A^k) w_{ki} + (\partial_{ij} A^k) w_k.
\]

\medskip

\textbf{Step 3. Apply the operator $L$ to $y$.} By linearity, $L[y] = L[\phi] + L[\psi]$. Grouping the terms containing third derivatives $w_{kij}$, we have
\begin{align*}
L[y] &= (\phi + A^k w_k) - h \phi_{ij} \left( \frac{w_k}{\phi} + A^k \right) w_{kij} \\
&\quad - \frac{h}{\phi} \phi_{ij} (w_{ki} w_{kj} - \partial_i \phi \partial_j \phi) - h \phi_{ij} \left( 2 (\partial_i A^k) w_{kj} + (\partial_{ij} A^k) w_k \right).
\end{align*}
We substitute the identity for $h \phi_{ij} w_{ijk}$ from equation \eqref{eq:third_deriv} to derive
\begin{align*}
L[y] = y - \left( \frac{w_k}{\phi} + A^k \right) (w_k - g_k + B^j w_{jk})  + Q - h \phi_{ij} \left( 2 (\partial_i A^k) w_{kj} + (\partial_{ij} A^k) w_k \right).
\end{align*}
Here, we let the quadratic term $Q := - \frac{h}{\phi} \phi_{ij} (w_{ki} w_{kj} - \partial_i \phi \partial_j \phi)$, which will be analyzed in a moment.

Focusing on the first two terms, we expand the product and use $y - A^k w_k = \phi$ and $\phi - \frac{|\nabla w|^2}{\phi} = \frac{\varepsilon^2}{\phi}$ to have
\[
L[y] = \frac{\varepsilon^2}{\phi} + \left( \frac{w_k}{\phi} + A^k \right) g_k - \left( \frac{w_k w_{jk}}{\phi} + A^k w_{jk} \right) B^j + Q - h \phi_{ij} \left( 2 (\partial_i A^k) w_{kj} + (\partial_{ij} A^k) w_k \right).
\]
From \eqref{eq:first_y}, we have
\[
\frac{w_k w_{jk}}{\phi} + A^k w_{jk} = \partial_j y - (\partial_j A^k) w_k,
\]
and thus, we have
\begin{align}
L[y]+B\cdot\nabla y
&=R_0+Q+B^j(\partial_jA^k)w_k
-2h\phi_{ij}(\partial_iA^k)w_{kj},
\label{eq:L-before-Q-corrected}\\
\text{where } R_0
&:=\frac{\varepsilon^2}{\phi}
+\left(\frac{w_k}{\phi}+A^k\right)g_k
-h\phi_{ij}(\partial_{ij}A^k)w_k.
\label{eq:R0}
\end{align}

\medskip

\textbf{Step 4. Estimate the terms concerning the Hessian $w_{ij}$}. The goal is to estimate the terms
\[
Q  + B^j (\partial_j A^k) w_k - 2h\phi_{ij}(\partial_iA^k)w_{kj}.
\]
We set
\[
q_k:=\frac{w_k}{\phi},
\qquad
P_{k\ell}:=\delta_{k\ell}-q_kq_\ell,
\qquad
\Phi:=(\phi_{ij}),
\qquad
H:=(w_{ij}).
\]
Then
\begin{equation}\label{eq:P-Phi}
P=\phi\Phi,
\qquad |q|\leq1,
\qquad 0<\Phi\leq\frac{1}{\phi}I.
\end{equation}
We further set
\[
S:=\Phi^{1/2}H\Phi^{1/2},
\qquad
X:=\Phi H
\]
and use the notation $M:N=\operatorname{tr}(M^{\mathsf T}N)$ and $|M|^2=M:M$ henceforth.

By direct differentiation and using $\phi_{kl}=\frac{1}{\phi}\left(\delta_{kl} - \frac{w_kw_l}{\phi^2}\right)$, we compute the quadratic term $Q$ by
\begin{align*}
Q &= - \frac{h}{\phi} \phi_{ij} (w_{ki} w_{kj} - \partial_i \phi \partial_j \phi)  = - \frac{h}{\phi} \phi_{ij} \left(w_{ki} w_{lj}\delta_{kl} - \frac{w_kw_{ki}w_lw_{lj}}{\phi^2}\right) \notag \\
&= - \frac{h}{\phi} \phi_{ij} w_{ki} w_{lj}\left(\delta_{kl} - \frac{w_kw_l}{\phi^2}\right) = -h\phi_{ij} w_{ki} w_{lj}\phi_{kl}. \notag
\end{align*}
Therefore, we obtain
\begin{align}\label{eq:true-Q}
Q = -h|S|^2.
\end{align}



Set $G=(G_{jk})$ where
\[
G_{jk} := \partial_j A^k\qquad\text{for each }j,k.
\]
We now expand the other terms by writing
\[
B^j (\partial_j A^k) w_k = -h \partial_i (\phi_{ij}(\nabla w))(\partial_j A^k) w_k = -h\phi_{ijm}w_{mi}(\partial_j A^k)w_k.
\]
By direct differentiation, it holds that
\[
\phi_{ijm} = \frac{3w_i w_j w_m}{\phi^5} - \frac{\delta_{ij} w_m + \delta_{im} w_j + \delta_{jm} w_i}{\phi^3} = -\frac{\phi_{ij}w_m+\phi_{im}w_j+\phi_{jm}w_i}{\phi^2}.
\]
Therefore, after appropriate re-indexing and summing over all $j,k$, we can write,
\begin{align*}
B^j(\partial_jA^k)w_k
&=2hX_{jk}(G_{jm}q_m)q_k
+h(q_\ell G_{\ell m}q_m) X_{jk}\delta_{jk}.
\end{align*}
It follows that
\begin{equation}\label{eq:other-terms-corrected}
B^j(\partial_jA^k)w_k
-2h\phi_{ij}(\partial_iA^k)w_{kj}
=-2hX:U,
\end{equation}
where
\begin{equation*}
U = (U_{jk}),\qquad U_{jk}
:=G_{jk}-(G_{jm}q_m)q_k
-\frac12(q_\ell G_{\ell m}q_m)\delta_{jk}.
\end{equation*}
Equivalently,
\[
U=GP-\frac12(q^{\mathsf T}Gq)I.
\]
Define
\begin{equation*}
\widehat U:=\Phi^{1/2}U\Phi^{-1/2}.
\end{equation*}
Using $P=\phi\Phi$, we obtain
\[
\widehat U
=\phi\Phi^{1/2}G\Phi^{1/2}
-\frac12(q^{\mathsf T}Gq)I.
\]
Using the fact that $|\phi\Phi^{1/2}G\Phi^{1/2}| \leq \phi\|\Phi\|_2|G|$ where $\|\Phi\|_2\left(=\frac{1}{\phi}\right)$ is the maximum eigenvalue of $\Phi$ (and therefore $\|\Phi\|_2=\frac{1}{\phi}$), we have
\begin{equation}\label{eq:Uhat-bound}
|\widehat U|\leq C|G|\leq C,
\end{equation}
where $C>0$ is independent of $\varepsilon$ and $h$.
Moreover,
\[
X:U=S:\widehat U.
\]
Combining \eqref{eq:true-Q} and \eqref{eq:other-terms-corrected}, we derive
\begin{align}
&Q+B^j(\partial_jA^k)w_k
-2h\phi_{ij}(\partial_iA^k)w_{kj}\notag\\
&\qquad=-h|S|^2-2hS:\widehat U
=-h|S+\widehat U|^2+h|\widehat U|^2
\leq Ch.
\label{eq:Hessian-i}
\end{align}

\medskip 

\textbf{Step 5. Conclude.} Therefore, by \eqref{eq:L-before-Q-corrected} and \eqref{eq:Hessian-i}, and by the fact that the other terms of $R_0$ are bounded by some constant (that is, $\frac{\varepsilon^2}{\phi}\leq \varepsilon$ and $\phi_{ij}(\partial_{ij}A^k)w_k \leq C$ in \eqref{eq:R0}), we conclude
\[
L[y] + B \cdot \nabla y \le C.
\]

\medskip

(ii) \textbf{Step 1. Apply the operator $L$ to $z = \rho y$.} Using the product rule for second derivatives, we have
\[ \partial_{ij}(\rho y) = \rho_{ij} y + \rho_i y_j + \rho_j y_i + \rho y_{ij}. \]
Substituting this into the operator $L$ yields
\begin{align*}
L[z] &= \rho y - h \phi_{ij} (\rho_{ij} y + 2 \rho_i y_j + \rho y_{ij}) \\
&= \rho (y - h \phi_{ij} y_{ij}) - h y (\phi_{ij} \rho_{ij}) - 2h \phi_{ij} \rho_i y_j \\
&= \rho L[y] - h y (\phi_{ij} \rho_{ij}) - 2h \phi_{ij} \rho_i y_j.
\end{align*}
Thus, we have
\begin{equation*}
L[z] = \rho \left(L[y] + B \cdot \nabla y \right) - \rho B \cdot \nabla y - h y (\phi_{ij} \rho_{ij}) - 2h \phi_{ij} \rho_i y_j. 
\end{equation*}

From $z = \rho y$, we change the derivatives of $y$ to those of $z$ by using
\[ \nabla z = \rho \nabla y + y \nabla \rho \implies \rho \nabla y = \nabla z - y \nabla \rho. \]
so that we obtain
\begin{align}\label{eq:Lz-corrected}
L[z] + B\cdot \nabla z= \rho \left(L[y] + B \cdot \nabla y \right) + y B^j\rho_j - h y (\phi_{ij} \rho_{ij}) - 2h \phi_{ij} \rho_i y_j.
\end{align}

\medskip

\textbf{Step 2. Estimate the terms linear in the Hessian $X=D^2w$ as before.}
Write
\[
c:=\frac{y}{\phi}=1+A^kq_k.
\]
Since $|A|\leq1-2\kappa$ and $|q|\leq1$, it holds that
\begin{equation}\label{eq:c-bound}
2\kappa\leq c\leq2-2\kappa.
\end{equation}
From \eqref{eq:first_y},
\begin{equation}\label{eq:y-gradient-matrix}
y_j=(q_k+A^k)w_{kj}+G_{jk}w_k.
\end{equation}
We record the equation of $z$ after substituting \eqref{eq:y-gradient-matrix}:
\begin{align}\label{eq:Lz-corrected-1}
L[z] + B\cdot \nabla z= \rho \left(L[y] + B \cdot \nabla y \right) + y B^j\rho_j - 2h \phi_{ij} \rho_i (q_k + A^k)w_{kj} - 2h \phi_{ij} \rho_i G_{jk}w_k - h y (\phi_{ij} \rho_{ij}) .
\end{align}

On the other hand, recall the definition $X=\Phi H$ and the formula
\[
\phi_{ijk} = -\frac{\phi_{ij}w_k+\phi_{ik}w_j+\phi_{jk}w_i}{\phi^2}.
\]
As before, we obtain
\begin{equation}\label{eq:B-rho}
B^j\rho_j = -h\phi_{ijk}w_{ki}\rho_j = \frac{h}{\phi}
\left(2\rho_jX_{jk}q_k+(q_j\rho_j)\operatorname{tr}X\right).
\end{equation}
Combining \eqref{eq:y-gradient-matrix} and \eqref{eq:B-rho}, the terms $y B^j\rho_j - 2h\phi_{ij}\rho_i(q_k+A^k)w_{kj}$ in
\eqref{eq:Lz-corrected} after being obtained by substituting \eqref{eq:y-gradient-matrix}, which are linear in $H$, satisfy, with suitable re-indexing,
\begin{align}
&yB^j\rho_j
-2h\phi_{ij}\rho_i(q_k+A^k)w_{kj} \notag\\
&\quad=h\left(
2c\rho_jX_{jk}q_k
+c(q_j\rho_j)\operatorname{tr}X
-2\rho_jX_{jk}(q_k+A^k)
\right) \notag\\
&\quad=-2hX:V,\label{eq:into-V}
\end{align}
where
\begin{align*}
V=(V_{jk}),\qquad V_{jk}
:= - c\rho_jq_k-\frac12c\rho_jq_j\delta_{jk}+\rho_j(q_k+A^k).
\end{align*}
By using $c-1 = A^mq_m$ and $(PA)_k = A^k - (A\cdot q)q_k$, we can simplify $V_{jk}$ by writing
\begin{align*}
V_{jk} = \rho_j(A^k + (1-c)q_k) -\frac12c\rho_jq_j\delta_{jk} & = \rho_j(A^k - A^mq_m q_k) -\frac12c\rho_jq_j\delta_{jk} \\
& = \rho_j(PA)_k -\frac12c\rho_jq_j\delta_{jk}.
\end{align*}
Define
\begin{equation*}
\widehat V:=\Phi^{1/2}V\Phi^{-1/2}.
\end{equation*}
Since $PA=\phi\Phi A$, we have
\[
\widehat V
=(\Phi^{1/2}\nabla\rho)
\otimes(\phi\Phi^{1/2}A)
-\frac{c}{2}(q\cdot\nabla\rho)I.
\]
Similarly as before, we have, from \eqref{eq:P-Phi}, \eqref{eq:c-bound}, and the boundedness
of $\rho$ and its derivatives, that
\begin{equation}\label{eq:Vhat-bound}
|\widehat V|\leq C,
\end{equation}
uniformly in $\varepsilon$ and $h$, and that
\[
X:V=S:\widehat V.
\]

\medskip

\textbf{Step 3. Estimate the terms concerning the Hessian $w_{ij}$ and conclude.}
Substituting the expansion \eqref{eq:L-before-Q-corrected} in
\eqref{eq:Lz-corrected-1} and using \eqref{eq:Hessian-i}, \eqref{eq:into-V}, we obtain
\begin{align}
L[z]+B\cdot\nabla z
={}&\rho R_0-hy\phi_{ij}\rho_{ij}
-2h\phi_{ij}\rho_iG_{jk}w_k\notag\\
&\quad-\rho h|S|^2
-2hS:\bigl(\rho\widehat U+\widehat V\bigr).
\label{eq:Lz-final-decomposition}
\end{align}
Let
\[
\widehat W:=\rho\widehat U+\widehat V.
\]
By \eqref{eq:Uhat-bound}, \eqref{eq:Vhat-bound}, and $\rho\geq1$, we have
\begin{align}
-\rho h|S|^2-2hS:\widehat W
&=-\rho h\left|S+\frac{\widehat W}{\rho}\right|^2
+\frac{h}{\rho}|\widehat W|^2
\leq Ch.
\label{eq:Hessian-ii}
\end{align}
Finally,
\[
|y|\,|\Phi|\leq C,
\qquad
|\Phi|\,|\nabla w|\leq C,
\qquad
|A|+|DA|+|D^2A|+|\rho|+|D\rho|+|D^2\rho|\leq C.
\]
Hence all the terms in the first line on the right-hand side of
\eqref{eq:Lz-final-decomposition} are bounded uniformly in $\varepsilon$
and $h$. Combining this fact with \eqref{eq:Hessian-ii}, we conclude
\[
L[z]+B\cdot\nabla z\leq C.
\]
This proves (ii) and finishes the proof.
\end{proof}

We are now in the position to prove the Lipschitz bound estimate.
\begin{proof}[Proof of Proposition \ref{prop:a-priori-estimate}]

Thanks to the maximum principle (see \cite{PW84}) applied to the following boundary value problem (as a consequence of Propositions \ref{prop:multiplier} and \ref{prop:equation-v}(ii))
\begin{equation*}
\begin{cases} 
\displaystyle L[z^{\varepsilon}] + B^{\varepsilon}\cdot\nabla z^{\varepsilon}\leq C & \text{in} \quad \Omega, \\ 
\xi^{\varepsilon}(z^{\varepsilon}) \leq0 & \text{on} \quad \partial\Omega,
\end{cases}
\end{equation*}
we obtain that $$z^{\varepsilon}\leq C\qquad\text{on}\quad\overline{\Omega}.$$
Since $z^{\varepsilon} = \rho y^{\varepsilon}$ and $2\kappa\phi^{\varepsilon} \leq y^{\varepsilon} \leq (2-2\kappa)\phi^{\varepsilon}$, we derive that $$\phi^{\varepsilon}\leq C,\quad\text{which leads to}\quad |\nabla w^{\varepsilon}|\leq C\quad\text{on}\quad\overline{\Omega}$$
for some constant $C=C(\Omega,\kappa,\|\beta^h\|_{C^2(\partial\Omega)},\|g\|_{W^{1,\infty}(\Omega)})>0$. This yields the conclusion of the proposition and finishes the proof.

Finally, we note that $w^\varepsilon$ is uniformly bounded and equi-continuous with respect to $\varepsilon$.
Hence, we deduce from the Arzer\'a--Ascoli theorem that $w^\varepsilon$ locally uniformly converges to some function $w$ as $\varepsilon\to 0$, which also ensures the strong convergence of $w^\varepsilon$ to $w$ in $L^2(\Omega)$.
Therefore, we can invoke Proposition~\ref{prop:gamma-convergence} to conclude that the limit function $w$ corresponds to the unique solution of the minimization problem \eqref{eq:minimization-with-g}. 
\end{proof}

\section{Convergence result}
\label{sec:convergence}
In this section, we shall give a proof of Theorem~\ref{thm:main}.
The proof consists in verifying that the function operator $S_h$ defined in \eqref{eq:def-Sh} satisfies the criteria that have been presented in Proposition~\ref{prop:BS}.
To this end, we prepare several lemmas, although some parts of the proofs will be left to previous works \cite{EG24,EG25} in the case when the proof might become lengthy.
From now on, we assume that the schemes $T_h$ and $S_h$ are constructed with the energy $J_{\beta^h}(u)$, where smooth functions $\beta^h$ on $\partial\Omega$ satisfying $\|\beta^h\|_{C^0(\partial\Omega)} \leq \|\beta\|_{C^0(\partial\Omega)}$ uniformly converge to $-\beta$ on $\pOmega$ as $h\to 0$.

\begin{lemma}[Monotonicity of $T_h$]
    \label{lem:Th_monotone}
    The set operator $T_h$ is monotone; for every $E\subset F\subset\closure{\Omega}$, it holds that $T_h(E)\subset T_h(F)$.
\end{lemma}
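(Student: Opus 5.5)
Monotonicity of $T_h$ comes from a comparison principle for the minimizers: if $E\subset F$, then the minimizers $w_E$ and $w_F$ of $J_{\beta^h}$ with data $d_{\Omega,E}$ and $d_{\Omega,F}$ satisfy $w_E\ge w_F$. Then $\{w_E\le0\}\subset\{w_F\le0\}$, which is $T_h(E)\subset T_h(F)$.

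\textbf{Step 1: order of the data.} From the definition of the geodesic signed distance, $E\subset F$ gives $d_{\Omega,E}\ge d_{\Omega,F}$ on $\closure{\Omega}$ (this is \cite[Lemma 3.5]{EG25}). I check the cases separately.
\begin{itemize}
\item If $x\notin F$, then $\operatorname{dist}_\Omega(x,E)\ge\operatorname{dist}_\Omega(x,F)$.
\item If $x\in F\setminus E$, then $d_{\Omega,E}(x)\ge0\ge d_{\Omega,F}(x)$.
\item If $x\in E$, then $\Omega\setminus E\supset\Omega\setminus F$, so $-\operatorname{dist}_\Omega(x,\Omega\setminus E)\ge-\operatorname{dist}_\Omega(x,\Omega\setminus F)$.
\end{itemize}

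\textbf{Step 2: comparison of minimizers.} Let $g_1:=d_{\Omega,E}\ge g_2:=d_{\Omega,F}$, and let $w_1,w_2$ be the corresponding minimizers. I use the standard lattice (sub-modularity) argument. The capillary part $C_{\beta^h}$ satisfies
\[
C(u\vee v)+C(u\wedge v)\le C(u)+C(v).
\]
Total variation is sub-modular, and the trace term is linear with $\gamma(u\vee v)+\gamma(u\wedge v)=\gamma u+\gamma v$, so it splits exactly. For the fidelity term, set $u=w_1$, $v=w_2$. On the set $\{w_2>w_1\}$ the pairs are exchanged, and since $g_1\ge g_2$ one gets
\[
(w_2-g_1)^2+(w_1-g_2)^2\le(w_1-g_1)^2+(w_2-g_2)^2,
\]
because the difference equals $-2(w_2-w_1)(g_1-g_2)\le0$. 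Summing, with $J_i$ the energy with data $g_i$,
\[
J_1(w_1\vee w_2)+J_2(w_1\wedge w_2)\le J_1(w_1)+J_2(w_2).
\]
Minimality gives $J_1(w_1)\le J_1(w_1\vee w_2)$ and $J_2(w_2)\le J_2(w_1\wedge w_2)$. Hence both inequalities are equalities. Uniqueness of minimizers (strict convexity) then forces $w_1\vee w_2=w_1$, i.e. $w_1\ge w_2$ a.e.

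\textbf{Step 3: upgrade to pointwise and conclude.} By Proposition~\ref{prop:a-priori-estimate}, the minimizers are Lipschitz continuous for fixed $h$, since the data $d_{\Omega,E}$ are Lipschitz by Proposition~\ref{prop:geodesic-uniform-bounded}. Therefore the a.e. inequality holds everywhere on $\closure{\Omega}$, and the level-set inclusion follows. The conventions $T_h(\emptyset)=\emptyset$ and $T_h(\closure\Omega)=\closure\Omega$ are trivially consistent with the inclusion.

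The main obstacle is the sub-modularity of $\int_\Omega|\nabla u|$ in $BV$ together with the trace identity for $\max$ and $\min$. I would invoke the coarea formula: $|D(u\vee v)|+|D(u\wedge v)|\le|Du|+|Dv|$ follows from $P(A\cup B)+P(A\cap B)\le P(A)+P(B)$ applied levelwise to $A=\{u>t\}$, $B=\{v>t\}$. The trace identity holds because the trace commutes with Lipschitz operations, as in \cite[Lemma 3.1]{EG25}.
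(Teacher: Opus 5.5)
Your proof is correct and follows the paper's route: order the data, $d_{\Omega,E}\ge d_{\Omega,F}$, deduce $w_E\ge w_F$ from comparison of minimizers, and read off $\{w_E\le 0\}\subset\{w_F\le 0\}$. The only difference is that the paper cites \cite[Lemma 3.1]{EG25} for the comparison step. You instead reprove it with the standard submodularity argument, where the trace term in fact splits by linearity of $\gamma$ and $u\vee v+u\wedge v=u+v$ alone, and you make explicit the upgrade from a.e.\ to pointwise via the Lipschitz bound of Proposition~\ref{prop:a-priori-estimate}.
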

\begin{proof}
    Let $w_E$ be the unique solution of the variational problem \eqref{eq:mms}.
    Then, we have $\{w_E\leq 0\}$. Since $E\subset F$, it is immediate that $d_{\Omega,E}\geq d_{\Omega,F}$ in $\closure{\Omega}$.
    We deduce from \cite[Lemma 3.1]{EG25} that $w_E\geq w_F$ in $\closure{\Omega}$ which concludes the proof.
\end{proof}

\begin{lemma}[Monotonicity of $S_h$]
    \label{lem:Sh_monotone}
    The function operator $S_h$ is monotone; for every $u,v\in C(\closure{\Omega})$ satisfying $u\leq v$ in $\closure{\Omega}$,
    it holds that $$S_hu\leq S_hv\qquad\mbox{in}\quad\closure{\Omega}.$$
\end{lemma}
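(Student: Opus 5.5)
The proof will go directly through the definition \eqref{eq:def-Sh} of $S_h$ and use only the monotonicity of the set operator from Lemma~\ref{lem:Th_monotone}. No continuity property of $T_h$ is needed.

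Fix $x\in\closure{\Omega}$ and $u\le v$ in $\closure{\Omega}$. For every $\lambda\in\R$ the superlevel sets are nested:
\[
\{u\ge\lambda\}\subset\{v\ge\lambda\},
\]
since $u(y)\ge\lambda$ implies $v(y)\ge u(y)\ge\lambda$. Lemma~\ref{lem:Th_monotone} then gives $T_h(\{u\ge\lambda\})\subset T_h(\{v\ge\lambda\})$. This holds at the endpoints as well, because of the convention $T_h(\emptyset)=\emptyset$ and $T_h(\closure{\Omega})=\closure{\Omega}$.

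Consequently, if $x\in T_h(\{u\ge\lambda\})$, then also $x\in T_h(\{v\ge\lambda\})$. So the set of admissible $\lambda$ in the supremum defining $S_hu(x)$ is contained in the set of admissible $\lambda$ for $S_hv(x)$. Taking suprema yields $S_hu(x)\le S_hv(x)$. This inclusion is valid for suprema in the extended sense, including the case where the first set is empty (supremum $-\infty$).

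The argument is purely order-theoretic, so I do not expect a real obstacle. The only point to check is that Lemma~\ref{lem:Th_monotone} applies to arbitrary subsets of $\closure{\Omega}$, including $\emptyset$ and $\closure{\Omega}$. This is covered by the stated endpoint convention, and also by the monotonicity of $E\mapsto d_{\Omega,E}$ combined with \cite[Lemma 3.1]{EG25}.
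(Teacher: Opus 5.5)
Your proposal is correct and follows the same route as the paper: nest the superlevel sets, apply the monotonicity of $T_h$ from Lemma~\ref{lem:Th_monotone}, and compare the suprema in \eqref{eq:def-Sh}. You also make explicit the final inclusion-of-admissible-sets step and the endpoint convention, which the paper leaves implicit.
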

\begin{proof}
    Take any $u,v\in C(\closure{\Omega})$ with $u\leq v$ in $\closure{\Omega}$, and $x\in\closure{\Omega}$.
    Then, for every $\lambda\in\R$, it immediately follows that $\{u\geq \lambda\} \subset \{v\geq \lambda\}$.
    Since $T_h$ is monotone by Lemma~\ref{lem:Th_monotone}, we obtain that $T_h(\{u\geq \lambda\})\subset T_h(\{v\geq \lambda\})$,
    which concludes the proof.
\end{proof}

\begin{lemma}[Translation invariance of $S_h$]
    \label{lem:Sh_translation}
    The function operator $S_h$ is translation invariant; for every $u\in C(\closure{\Omega})$ and $c\in\R$, it holds that $$S_h(u + c) = S_hu + c\qquad\mbox{in}\quad\closure{\Omega}.$$
\end{lemma}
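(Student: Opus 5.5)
The plan is to reduce the statement to a level-set identity and then use a one-line property of the geodesic signed distance. Fix $u\in C(\closure{\Omega})$, $c\in\R$ and $x\in\closure{\Omega}$. For every $\lambda\in\R$ we have the set identity
\[
\{u+c\geq\lambda\}=\{u\geq\lambda-c\}.
\]
Hence
\[
x\in T_h(\{u+c\geq\lambda\})\iff x\in T_h(\{u\geq\lambda-c\}).
\]
Substituting $\mu=\lambda-c$ in the definition \eqref{eq:def-Sh} gives
\[
S_h(u+c)(x)=\sup\{\mu+c\mid x\in T_h(\{u\geq\mu\})\}=S_hu(x)+c.
\]
Since $\sup(A+c)=\sup A+c$, this also holds in the extended sense, i.e.\ if the supremum is $\pm\infty$. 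This settles the first identity; it uses only the fact that $T_h$ acts on sets and never needs the variational structure.

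For the normalization $S_h(0)=0$, I use the constant function $u\equiv0$. Its superlevel sets are $\{0\geq\lambda\}=\closure{\Omega}$ for $\lambda\leq0$ and $\emptyset$ for $\lambda>0$. By the end-point convention $T_h(\closure{\Omega})=\closure{\Omega}$ and $T_h(\emptyset)=\emptyset$, so every $x$ belongs to $T_h(\{0\geq\lambda\})$ exactly when $\lambda\leq0$. Therefore $S_h0(x)=\sup(-\infty,0]=0$.

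To justify the convention rather than simply assume it, I would check it against the variational definition. For $E=\closure{\Omega}$, $d_{\Omega,E}$ is identically negative or $-\infty$, so formally $w\leq0$ everywhere. For $E=\emptyset$, the distance is $+\infty$. This is exactly why the convention is imposed rather than derived from \eqref{eq:mms}, and the paper has fixed it explicitly.

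The main (mild) obstacle is ensuring that $S_hu$ is real-valued, so that the manipulation with $c$ is not an $\infty-\infty$ issue. For this I would use monotonicity (Lemma~\ref{lem:Sh_monotone}) together with the normalization: from $\min u\leq u\leq\max u$ and the translation identity applied to constants, we get $\min u\leq S_hu\leq\max u$. As noted above, the translation argument holds verbatim even in the extended reals, so this bound follows without circularity.
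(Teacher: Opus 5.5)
Your proof is correct and is exactly the argument the paper means by ``straightforward from the definition of $S_h$'': you rewrite $\{u+c\geq\lambda\}=\{u\geq\lambda-c\}$ and change variables $\mu=\lambda-c$ inside the supremum. Your extra remarks, namely $S_h(0)=0$ via the end-point convention and the bound $\min u\leq S_hu\leq\max u$ from monotonicity, are also correct, though they are not needed for the stated identity.
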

\begin{proof}
    This is straightforward from the definition of $S_h$.
\end{proof}

\begin{lemma}[Continuity of $T_h$]
    \label{lem:Th_continuity} The set operator $T_h$ is continuous in the sense of Definition~\ref{defn:Th-continuous}.
\end{lemma}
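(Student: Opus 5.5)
The plan is to apply Proposition~\ref{prop:continuity-T} to the map $E\mapsto w_E^h$, where $w_E^h$ denotes the unique minimizer of $J_{\beta^h}$ with data $d_{\Omega,E}$, so that $T_h(E)=\{w_E^h\le 0\}$. We fix $h\in(0,1)$ throughout and check the three hypotheses of that proposition with constants that may depend on $h$ but not on $E$.

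\textbf{Monotonicity.} This is exactly the content of Lemma~\ref{lem:Th_monotone}. If $E\subset F$, then $d_{\Omega,E}\ge d_{\Omega,F}$, and the comparison result \cite[Lemma 3.1]{EG25} gives $w_E^h\ge w_F^h$.

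\textbf{Uniform boundedness.} By Proposition~\ref{prop:geodesic-uniform-bounded} and the boundedness of $\Omega$, the data satisfy $\|d_{\Omega,E}\|_{L^\infty(\Omega)}\le C_\Omega\operatorname{diam}\Omega$ for every nonempty proper $E$. The endpoint cases $E=\emptyset$ and $E=\overline{\Omega}$ are fixed by convention. A maximum-principle comparison of $w_E^h$ with constants then bounds $\|w_E^h\|_{C^0}$ independently of $E$. Concretely, I would apply the maximum principle to the regularized problem \eqref{eq:E-L-approximate}: at an interior maximum one gets $w^\varepsilon\le g^\varepsilon$, and boundary maxima are excluded by the oblique condition since $|\beta^h|<1$. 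Alternatively, one can invoke monotonicity together with the comparison in \cite{EG25} against the solutions for the data $\pm C_\Omega\operatorname{diam}\Omega$.

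\textbf{Equi-Lipschitz bound.} This is the heart of the argument. For each $E$ we take $g=d_{\Omega,E}$. It is Lipschitz with $\|g\|_{W^{1,\infty}(\Omega)}\le C(\Omega)$ uniformly in $E$, again by Proposition~\ref{prop:geodesic-uniform-bounded} and boundedness. We choose mollifications $g^\varepsilon$ with $\|\nabla g^\varepsilon\|_\infty\le\|\nabla g\|_\infty$. Since the constant in Proposition~\ref{prop:a-priori-estimate} depends only on $\Omega$, $\kappa$, $\|\beta^h\|_{C^2(\partial\Omega)}$ and $\|g\|_{W^{1,\infty}}$, we get $\|\nabla w^\varepsilon\|_\infty\le C_h$ with $C_h$ independent of $E$ and $\varepsilon$. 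Passing $\varepsilon\to0$ along a subsequence, $w^\varepsilon\to w$ uniformly, and by Proposition~\ref{prop:gamma-convergence}(ii) the limit is the minimizer $w_E^h$. Uniqueness of the minimizer identifies the limit, so $\operatorname{Lip}(w_E^h)\le C_h$.

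One subtlety is that $\Omega$ need not be convex, so Lipschitz in the geodesic sense must be converted to Euclidean Lipschitz on $\overline{\Omega}$. Here the uniformity (quasi-convexity) of the smooth bounded domain gives $\operatorname{dist}_\Omega(x,y)\le C|x-y|$, which only changes the constant. Having verified all three hypotheses, Proposition~\ref{prop:continuity-T} yields continuity of $T_h$.

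I expect the main obstacle to be the uniform-in-$E$ Lipschitz bound, but that has already been packaged in Proposition~\ref{prop:a-priori-estimate}. The remaining care lies in two points: confirming that the dependence on $g$ is only through $\|g\|_{W^{1,\infty}}$, which is bounded uniformly over all $E$, and justifying the identification of the $\varepsilon$-limit with the minimizer.
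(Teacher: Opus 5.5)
Your argument is the paper's: you check the three hypotheses of Proposition~\ref{prop:continuity-T}, taking monotonicity from \cite[Lemma 3.1]{EG25} and the $E$-independent Lipschitz bound from Proposition~\ref{prop:a-priori-estimate} with $g=d_{\Omega,E}$, whose $W^{1,\infty}$ norm is bounded uniformly in $E$. You also make explicit three points the paper leaves implicit: the $h$-dependence of the constant, the identification of the $\varepsilon$-limit with the minimizer, and the passage from $\|\nabla w\|_{L^\infty}$ to a Euclidean Lipschitz constant via quasi-convexity.

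\textbf{The ``concrete'' maximum-principle argument for uniform boundedness is wrong and should be discarded.} At a boundary maximum $x_0$ of $w^\varepsilon$ one has $\nabla w^\varepsilon(x_0)=a\,\nu_\Omega(x_0)$ with $a\ge0$. The condition $B^\varepsilon=0$ then reads $a+\beta^h(x_0)\sqrt{\varepsilon^2+a^2}=0$. Whenever $\beta^h(x_0)\le 0$ this is solvable, by $a=|\beta^h(x_0)|\varepsilon/\sqrt{1-\beta^h(x_0)^2}$. So boundary maxima are excluded only where $\beta^h>0$.

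In fact the conclusion $\max w^\varepsilon\le\max g^\varepsilon$ is false in general. Integrate \eqref{eq:E-L-approximate} and use $\nabla\phi^\varepsilon(\nabla w^\varepsilon)\cdot\nu_\Omega=-\beta^h$ on $\partial\Omega$. This gives $\int_\Omega w^\varepsilon\,\mathrm{d}\mathcal{L}^d=\int_\Omega g^\varepsilon\,\mathrm{d}\mathcal{L}^d-h\int_{\partial\Omega}\beta^h\,\mathrm{d}\mathcal{H}^{d-1}$. Hence for constant data and $\int_{\partial\Omega}\beta^h\,\mathrm{d}\mathcal{H}^{d-1}<0$, the solution exceeds the data somewhere.

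Your alternative is the right argument, and it works because it compares with the \emph{minimizers} for the data $\pm M$, not with the constants. The energy with data $M$ evaluated at $M+v$ equals the energy with data $0$ evaluated at $v$, plus $M\int_{\partial\Omega}\beta^h\,\mathrm{d}\mathcal{H}^{d-1}$. So these minimizers are $\pm M+w_0$, where $w_0$ is the minimizer for zero data, which is Lipschitz and hence bounded. Monotonicity then gives $\sup_E\|w_E\|_{C^0}\le M+\|w_0\|_{C^0}$.

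The paper phrases this step as comparison with the constants $\pm M$ themselves. However, a constant is the minimizer for its own data only if $\int_{\partial\Omega}\beta^h\,\mathrm{d}\mathcal{H}^{d-1}=0$. Indeed, a constant shift by $t$ changes the energy by $t\int_{\partial\Omega}\beta^h\,\mathrm{d}\mathcal{H}^{d-1}+\frac{t^2}{2h}\mathcal{L}^d(\Omega)$. So your formulation is the one that holds in general.
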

\begin{proof}
    Since $T_h$ is defined by $E\mapsto T_h(E):=\{w_E\leq 0\}$, it is enough to confirm that for every $E\subset\closure{\Omega}$, $w_E$ satisfies
    all conditions in Proposition~\ref{prop:continuity-T}.
    The monotonicity has been shown in \cite[Lemma 3.1]{EG25}.
    Since $\Omega$ is bounded, there exists a constant $M > 0$ satisfying $-M\leq d_{\Omega,E}\leq M$ for all $E\subset\closure{\Omega}$.
    Since the constant functions $w_\pm \equiv\pm M$ are classical solutions to the discrete equation \eqref{eq:E-L} with $g=\pm M$, the monotonicity property again yields $-M\leq w_E\leq M$, and thus $w_E$ is uniformly bounded with respect to $E$.

    Finally, we shall check the equi-continuity of $w_E$ with respect to $E$. We deduce from Proposition~\ref{prop:a-priori-estimate} that there exists a positive constant $C$ depending only on $\Omega$, $\kappa$, and $\|\sgrad{\beta}\|_{C^1(\pOmega)}$ (and the constant $C_\Omega$ from Proposition~\ref{prop:geodesic-uniform-bounded} which depends only on $\Omega$) such that  $$\|\nabla w_E\|_{L^\infty(\Omega)}\leq C.$$
    Since the right-hand side of this inequality is independent of $E$, we see that $w_E$ is uniformly Lipschitz continuous with respect to $E$.
\end{proof}


\begin{lemma}[Consistency of $S_h$]
    \label{lem:Sh_consistency-main}
    For every $\varphi\in C^2_F(\Omega)\cap C^2(\overline{\Omega})$ and $z\in\overline{\Omega}$ satisfying either
    \begin{itemize}
        \item $z\in\Omega$ or
        \item $z\in\partial\Omega$ and $\left<\nabla\varphi(z),\nu_\Omega(z)\right> + \beta(z)|\nabla\varphi(z)| > 0\, (resp., < 0)$,
    \end{itemize}
    it holds that
    \begin{equation}
        \label{eq:consistency-main}
        \begin{aligned}
        \relaxlimitSup\frac{S_h\varphi(z) - \varphi(z)}{h}  &\le - F_*(\nabla\varphi(z), \nabla^2\varphi(z))\\
        resp.,\, \relaxlimitInf\frac{S_h\varphi(z) - \varphi(z)}{h}  &\ge - F^*(\nabla\varphi(z), \nabla^2\varphi(z)).
        \end{aligned}
    \end{equation}
\end{lemma}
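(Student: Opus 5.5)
We prove the subsolution inequality (the $\limsup$ part); the supersolution inequality follows by the symmetric argument, with superlevel sets replaced by sublevel sets and barriers reversed. The plan follows \cite{EG25}. The one new ingredient is Lemma~\ref{lem:Th_continuity}: it makes Proposition~\ref{prop:relation-TS} available, so that $\{S_h\varphi\ge\lambda\}=T_h(\{\varphi\ge\lambda\})$ for every $\lambda$. The consistency question is thereby reduced to a statement about the single set operator $T_h$ acting on superlevel sets of the smooth test function.

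\textbf{Case $\nabla\varphi(z)\neq0$.}
Fix $\delta>0$ and set $c:=-F_*(\nabla\varphi(z),\nabla^2\varphi(z))+\delta$. Arguing by contradiction, assume there are $h_k\to0$ and $z_k\to z$ with
\[
S_{h_k}\varphi(z_k)\ge \varphi(z_k)+c\,h_k .
\]
By Proposition~\ref{prop:relation-TS}, this says $z_k\in T_{h_k}(E_k)$, where $E_k:=\{\varphi\ge\varphi(z_k)+ch_k\}$. Equivalently, the minimizer $w_{E_k}$ of $J_{\beta^{h_k}}$ with data $d_{\Omega,E_k}$ satisfies $w_{E_k}(z_k)\le 0$.

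We contradict this by a barrier. Near $z$, the function $d_{\Omega,E_k}$ is smooth: the level set of $\varphi$ is regular there because $\nabla\varphi(z)\neq0$, and the geodesic distance agrees with the Euclidean one locally. We construct a smooth local supersolution $\bar w$ of \eqref{eq:E-L}, of the form
\[
\bar w=d_{\Omega,E_k}-h_k\bigl(\operatorname{div}\nu+\text{small}\bigr).
\]
In the interior this is the standard Chambolle computation, using that $\operatorname{div}\nu$ of the distance function reproduces $-F(\nabla\varphi,\nabla^2\varphi)/|\nabla\varphi|$ up to $o(1)$. The barrier satisfies $\bar w(z_k)>0$, and it lies above $w_{E_k}$ on the boundary of a small ball. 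For $z\in\partial\Omega$, we use the strict inequality $\langle\nabla\varphi(z),\nu_\Omega(z)\rangle+\beta(z)|\nabla\varphi(z)|>0$. Since $\beta^{h}\to-\beta$ uniformly, the barrier also satisfies the strict oblique inequality for the capillary coefficient $\beta^{h_k}$ for all large $k$. This is where the uniform (not $C^1$) convergence of $\beta^h$ is sufficient.

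A localized comparison principle for \eqref{eq:E-L}, as in \cite[Lemma 3.1]{EG25}, applied on $B_r(z)\cap\closure{\Omega}$, then gives $w_{E_k}\ge\bar w$ there. The boundary values on $\partial B_r\cap\Omega$ are controlled by the uniform bounds $|w_E|\le M$ and by the $O(h)$ closeness of $w_E$ to $d_{\Omega,E}$ away from the front. Hence $w_{E_k}(z_k)>0$, a contradiction. Letting $\delta\to0$ gives the claim.

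\textbf{Case $\nabla\varphi(z)=0$, $z\in\Omega$.}
Here $\nabla^2\varphi(z)=O$ by \eqref{eq:compatible-set}, so $F_*=F^*=0$ at $(0,O)$ and we need $\limsup^*(S_h\varphi-\varphi)/h\le0$. Near $z$ we bound $\varphi\le\varphi(z)+f(|x-z|)$ with $f\in\mathcal F$, i.e.\ $f$ is flat to second order. By monotonicity (Lemma~\ref{lem:Sh_monotone}) and translation invariance (Lemma~\ref{lem:Sh_translation}), it suffices to treat a radial test function. For it, the evolution of small balls under $T_h$ is explicit, since it is interior Chambolle dynamics for $h$ small: balls shrink by roughly $(d-1)h/R$. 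Combined with the flatness of $f$, this yields the bound exactly as in \cite{EGI12-1,EG25}. The part of $\varphi$ outside the small ball is handled by the finite speed of $T_h$, which again follows from comparison with barriers.

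\textbf{Main obstacle.}
The hardest step is the boundary barrier in the first case. One must construct $\bar w$ near $z\in\partial\Omega$ so that it satisfies the discrete oblique condition with the perturbed coefficient $\beta^{h_k}$, while keeping the interior supersolution property with an $o(h)$ error, all uniformly in $k$. I would first try the construction of \cite{EG25} essentially verbatim. That construction uses only the strict sign of $B$ at $z$ and the smoothness of $\partial\Omega$, not convexity or \eqref{assumption:convexity}; convexity entered \cite{EG25} only through the Lipschitz estimate, which Proposition~\ref{prop:a-priori-estimate} now supplies. The real check is therefore that no step of the \cite{EG25} barrier silently used convexity or $d_{\Omega,E}=d_E$. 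Near $z$ this should hold, because the geodesic and Euclidean distances coincide locally.
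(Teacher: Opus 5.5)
Your outline is the same as the paper's sketch. Lemma~\ref{lem:Th_continuity} together with Proposition~\ref{prop:relation-TS} reduces the claim to $T_h$ acting on $\{\varphi\ge\lambda\}$. The strict sign at $z\in\pOmega$ passes to $\beta^h$ by uniform convergence, the barriers of \cite{EG25} are reused, and the flat case is quoted. The trouble is in the places where you argue beyond quoting \cite{EG25}.

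\textbf{Boundary case.} Your reason why the boundary barrier carries over to a nonconvex container is false. Suppose $\Omega$ is not locally convex at $z\in\pOmega$ (e.g.\ locally the exterior of a ball). Then chords between nearby points of $\pOmega$ leave $\closure{\Omega}$, so $\operatorname{dist}_\Omega$ and the Euclidean distance disagree on every neighbourhood of $z$. Even for convex $\Omega$, $d_{\Omega,E_k}$ is not smooth near the contact line. The sign condition allows $\nu_\Omega(z)\cdot\nabla\varphi(z)>0$, and then $\closure{\Omega}\setminus E_k$ has an obtuse angle at $\partial E_k\cap\pOmega$. In a wedge-shaped region there, the nearest points of $E_k$ lie on the contact line. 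So $d_{\Omega,E_k}$ is a distance to a codimension-two set, with Hessian of size $1/\operatorname{dist}(\cdot,\partial E_k\cap\pOmega)$. Hence the interior Chambolle barrier $d_{\Omega,E_k}-h(\operatorname{div}\nu+\dots)$ is not available near $z$. The paper does not rely on local agreement of distances. It takes the one-sided expansion of \cite[Proposition 4.2]{EG25} directly for $d_{\Omega,E}$, together with the translating-soliton barriers. These need only the strict oblique sign at $z$ and the uniform bounds $-1<\underline{\beta}\le\beta^h\le\overline{\beta}<1$.

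\textbf{Localization.} Your localization on $B_r(z)\cap\closure{\Omega}$ does not close. The sphere $\partial B_r(z)$ crosses the front $\partial E_k$, and at the crossing points both $w_{E_k}$ and your barrier are $O(h)$. Neither $|w_E|\le M$ nor closeness to $d_{\Omega,E}$ ``away from the front'' gives $w_{E_k}\ge\bar w$ there; that ordering to order $o(h)$ is the consistency statement itself at those points. The paper localizes differently: it modifies $\varphi$ outside a small neighbourhood of $z$ and compares superlevel sets through the monotonicity of $T_h$. Alternatively, you could add a cutoff that makes the barrier very negative near $\partial B_r$ and absorb its curvature error into the $\delta h$ slack. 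Also, a lower barrier yielding $w_{E_k}\ge\bar w$ must be a subsolution, not a supersolution.

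\textbf{Flat case.} Replacing $\varphi$ by the radial majorant $\psi=\varphi(z)+f(|x-z|)$ inside the consistency estimate costs $\psi(y)-\varphi(y)\le 2f(|y-z|)$. This need not be $o(h)$ when $|y-z|\gg\sqrt h$, which the relaxed limit allows. The radial replacement has to be made on the test function in the viscosity argument, where it preserves the touching. Only then is the explicit ball computation uniform in the nearby points.
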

\begin{proof}[Sketch of proof]
We follow the consistency argument in \cite[Theorem 4.3]{EG25}.
We recall the points needed for the present paper.

First, by Lemma~\ref{lem:Th_continuity} and
Proposition~\ref{prop:relation-TS}, we have
\[
    \{S_h\varphi\ge\lambda\}
    =
    T_h(\{\varphi\ge\lambda\})\qquad\forall\lambda\in\R.
\]
Thus, the desired consistency inequalities are reduced to one-step
inclusion estimates for
\[
    E^\varphi_\lambda:=\{\varphi\ge\lambda\}.
\]

Second, near a regular level set of \(\varphi\), the argument of
\cite[Proposition 4.2]{EG25} gives the one-sided local expansion
\[
    w^h_{E^\varphi_\lambda}
    =
    d_{\Omega,E^\varphi_\lambda}
    -
    h\kappa_{E^\varphi_\lambda}
    +
    o(h)\qquad\mbox{as}\quad h\to 0,
\]
where $\kappa_{E^\varphi_\lambda}$ denotes the mean curvature of the smooth hypersurface $\partial E^\varphi_\lambda$, say $\kappa_{E^\varphi_\lambda} := \operatorname{div}(\nabla\phi(\nabla\varphi))$,
in the sense needed for the comparison argument. At boundary points on $\pOmega$,
the sign condition
\[
    \langle\nabla\varphi(z),\nu_\Omega(z)\rangle
    +\beta(z)|\nabla\varphi(z)|\gtrless 0
\]
allows one to choose $h>0$ so small that
\[
    \langle\nabla\varphi(z),\nu_\Omega(z)\rangle
    - \beta^h(z)|\nabla\varphi(z)|\gtrless 0,
\]
and $\underline{\beta}\leq \beta^h\leq \overline{\beta}$ holds on $\pOmega$ for some $-1 < \underline{\beta}\leq \overline{\beta}<1$.
Thus, we can use the same translating-soliton barriers as in
\cite{EG25} with $\beta = \beta^h$. As in that proof, the test function may be modified outside
a small neighborhood of \(z\), and the monotonicity of \(T_h\) is used to
compare the corresponding super-level sets.

Third, inserting this one-step estimate into the relation between
\(S_h\) and \(T_h\), and expanding \(\varphi\) at the points
\(z_h\to z\), gives
\[
    \relaxlimitSup
    \frac{S_h\varphi(z)-\varphi(z)}{h}
    \le
    -F_*(\nabla\varphi(z),\nabla^2\varphi(z)),
\]
and similarly for the lower inequality. This is precisely the last part
of the proof of \cite[Theorem 4.3]{EG25}.

Finally, if \(\nabla\varphi(z)=0\) and \(\nabla^2\varphi(z)=O\), the flat
barrier argument of \cite[Theorem 4.3]{EG25} applies and yields both
upper and lower limits, since \(F_*(0,O)=F^*(0,O)=0\).
\end{proof}

We are now in the position to prove our main result.

\begin{proof}[\textbf{Proof of Theorem~\ref{thm:main}}]

    We follow the strategy of Barles and Souganidis \cite[Theorem 2.1]{BS91}.
    Since $\overline{u}\geq \underline{u}$ is trivial, we aim to show $\overline{u}\leq \underline{u}$ in $\closure{\Omega}\times[0,T]$.
    By the definition, it immediately follows that $\overline{u}(\cdot,0) = u_0 = \underline{u}(\cdot,0)$ in $\closure{\Omega}$.
    Thus, the comparison principle (Proposition~\ref{prop:comparison-principle}) implies $\overline{u}\leq \underline{u}$ in $\closure{\Omega}\times[0,T]$ once
    $\overline{u}$ (resp., $\underline{u}$) is shown to be a viscosity subsolution (resp., supersolution).

    Hereafter, we only show the former case since the latter one can be proved by a symmetric argument.
    Take any $\varphi\in C^2_F(\closure{\Omega}\times[0,T])$ and $(\hat x, \hat t)\in\closure{\Omega}\times[0,T]$.
    Assume that $\overline{u} - \varphi$ takes a local maximum at $(\hat x,\hat t)$.
    We need to show
    \begin{equation}
        \label{eq:convergence-goal}
        \varphi_t(\hat x, \hat t) + F_*(\nabla\varphi(\hat x, \hat t), \nabla^2\varphi(\hat x,\hat t)) \leq 0.
    \end{equation}
    Then, there exist sequences $\{h_n\}_n$ and $\{(x_n,t_n)\}_n$ such that
    \begin{equation*}
        h_n \downarrow 0,\quad (x_n,t_n)\to (\hat x,\hat t),\quad
        u^{h_n}(x_n,t_n)\to \overline{u}(\hat x, \hat t)\qquad\mbox{as}\quad n\to\infty
    \end{equation*}
    and $u^{h_n} - \varphi$ takes the global maximum at $(x_n,t_n)$.
    This is possible since $u^h$ is uniformly bounded with respect to $h > 0$, and we can modify $\varphi$ outside a ball $B_\delta(\hat x,\hat t)$.

    First, we treat the case when $\nabla\varphi(\hat x,\hat t)\neq 0$, and assume that
    either $\hat x\in\Omega$ or $\hat x\in\pOmega$ and $\langle\nu_\Omega(\hat x),\nabla\varphi(\hat x,\hat t)\rangle + \beta(\hat x)|\nabla \varphi(\hat x,\hat t)| > 0$ (otherwise, the conclusion is straightforward due to the definition of viscosity subsolutions).
    Then, we have

    \begin{equation}
        \label{eq:convergence-1}
    u^{h_n}(\cdot, t_n - h_n) - \varphi(\cdot, t_n-h_n) \leq u^{h_n}(x_n, t_n) - \varphi(x_n,t_n)\qquad\mbox{in}\quad \closure{\Omega}.
    \end{equation}
    Let $\xi_n$ denote the right-hand side of the above inequality.
    We apply $S_{h_n}$ to both sides of \eqref{eq:convergence-1} and obtain that

    \begin{equation}
        \label{eq:convergence-2}
        u^{h_n}(\cdot, t_n) \leq S_{h_n}\left[\varphi(\cdot, t_n - h_n)\right] + \xi_n\qquad\mbox{in}\quad \closure{\Omega}.
    \end{equation}
    Here, we have invoked the monotonicity and translation invariance of $S_{h_n}$ (Lemma~\ref{lem:Sh_monotone} and Lemma~\ref{lem:Sh_translation} on noting that $\xi_n$ is constant).
    Subtracting $\varphi(x_n,t_n-h_n)$ from both sides of \eqref{eq:convergence-2} and evaluating at $x = x_n$,
    we derive

    \begin{equation*}
        \varphi(x_n, t_n) - \varphi(x_n, t_n-h_n) \leq S_{h_n}\left[\varphi(\cdot,t_n-h_n)\right](x_n) - \varphi(x_n,t_n-h_n).
    \end{equation*}
    We divide the above inequality by $h_n$ and send $n\to\infty$ to obtain
    \begin{align*}
    \varphi_t(\hat x,\hat t) &= \lim_{n\to\infty}\frac{\varphi(x_n,t_n) - \varphi(x_n,t_n-h_n)}{h_n}\\
    &\leq \relaxlimitSup\left[\frac{S_h\varphi(\cdot,\hat t) - \varphi(\cdot,\hat t)}{h}\right](\hat x)\leq -F_*(\nabla\varphi(\hat x,\hat t), \nabla^2\varphi(\hat x,\hat t)).
    \end{align*}
    Here, we have invoked the consistency property of $S_{h_n}$ (Lemma~\ref{lem:Sh_consistency-main}) to derive the last inequality,
    and hence \eqref{eq:convergence-goal} follows.

    In the case when $\nabla\varphi(\hat x,\hat t) = 0$, we may assume that $\nabla^2\varphi(\hat x,\hat t) = O$ (see \eqref{eq:compatible-set}).
    Since $F_*(0,O) = 0$ holds for our definition of $F$, the previous argument works, and we see that $\varphi_t(\hat x,\hat t) \leq 0$.

    Therefore, we conclude that $\overline{u}$ is a viscosity subsolution of \eqref{eq:level-set-FB}.
\end{proof}

\begin{remark}
We note that the limit of the modified Capillary Chambolle-type scheme is independent of the choice of a convergent sequence $\beta^h \to -\beta$ in $C^0(\pOmega)$.
\end{remark}


\section*{Acknowledgment} The authors appreciate Mr. Kei Matsushita at the University of Tokyo for his question which motivated further improvement of Section 3.



\bibliographystyle{abbrv}
\bibliography{Preprint}

\end{document}